\theoremstyle{plain} 
\newtheorem{thm}{Theorem}[section] 
\newtheorem{cor}[thm]{Corollary} 
\newtheorem{lem}[thm]{Lemma} 
\newtheorem{prop}[thm]{Proposition} 
\newtheorem{rmk}[thm]{Remark}
\theoremstyle{definition} 
\newtheorem{defi}{Definition}[section] 
\newcommand{\eps}{\varepsilon}
\numberwithin{equation}{section}
\author[D.~Mastrostefano]{Daniele Mastrostefano}
\address{University of Warwick, Mathematics Institute, Zeeman Building, Coventry, CV4 7AL, UK}
\email{Daniele.Mastrostefano@warwick.ac.uk}
\keywords{Variance of complex sequences in arithmetic progressions; divisor functions and their generalization; circle method; Ramanujan sums; mean value of multiplicative functions.}
\subjclass[2010]{Primary: 11N64. Secondary: 11B25}
\begin{document}
\title[Lower bounds for the variance of generalized divisor functions in APs]
      {A lower bound for the variance of generalized divisor functions in arithmetic progressions}\thanks{The author is funded by a Departmental Award and by an EPSRC Doctoral Training Partnership Award. The present work has been conducted when the author was a first year PhD student at the University of Warwick.}

\begin{abstract}
We prove that for a large class of multiplicative functions, referred to as generalized divisor functions, it is possible to find a lower bound for the corresponding variance in arithmetic progressions. As a main corollary, we deduce such a result for any $\alpha$-fold divisor function, for any complex number $\alpha\not\in \{1\}\cup-\mathbb{N}$, even when considering a sequence of parameters $\alpha$ close in a proper way to $1$. Our work builds on that of Harper and Soundararajan, who handled the particular case of $k$-fold divisor functions $d_k(n)$, with $k\in\mathbb{N}_{\geq 2}$. 
\end{abstract}

\maketitle

\section{Introduction}
Let $f$ be a complex arithmetic function. It is believed that many $f$ are roughly uniformly distributed in arithmetic progressions, or equivalently that there is an approximation
$$\sum_{\substack{n\leq N\\ n\equiv a\pmod{q}}}f(n)\approx \frac{1}{\phi(q)}\sum_{\substack{n\leq N\\ (n,q)=1}}f(n),$$ 
for any $a\ (\bmod\ {q})$ with $(a,q)=1.$ Here $N$ is a large positive integer and $\phi(q)$ indicates the Euler totient function, which counts the number of reduced residue classes $\bmod\ {q}$, i.e. classes $a=1,\dots, q$ with the greatest common divisor $(a,q)=1$. In order to understand whether this point of view may be correct or not we study the variance
$$V_q(f)=\frac{1}{\phi(q)}\sum_{\substack{a=1,\dots,q\\ (a,q)=1}}\bigg|\sum_{\substack{n\leq N\\ n\equiv a\pmod{q}}}f(n)-\frac{1}{\phi(q)}\sum_{\substack{n\leq N\\ (n,q)=1}}f(n)\bigg|^2.$$
In a series of works, Elliott \cite{E1, E2} and Hildebrand \cite{H} found upper bounds for $V_q(f)$, for single moduli $q$, when $f$ is a multiplicative function with absolute value bounded by $1$. Their results were improved by Balog, Granville and Soundararajan\cite{BGS}, who understood the asymptotic of such variance, at worst in terms of certain exceptional moduli that naturally arise in the context of distribution of functions in arithmetic progressions. We would also like to draw the reader's attention to a very recent result of Klurman, Mangerel and Ter\"{a}v\"{a}inen \cite{KMT} about the variance of $1$--bounded multiplicative functions in short arithmetic progressions. 

For the specific case of $d_2(n)=\sum_{d|n}1$, the function which counts the number of divisors of an integer $n$, such problem has been tackled in the paper of Banks, Heath-Brown and Shparlinski \cite{BHS}. More generally, for the $k$-th divisor function $d_k(n)=\sum_{e_1e_2\cdots e_k=n}1$, which counts all the possible ways of decomposing $n$ into a product of $k$ positive integers, a conjecture on the asymptotic behaviour of its variance in arithmetic progressions has been suggested in the work of Keating, Rodgers, Roditty-Gershon and Rudnick \cite{KRGR}. 

We now introduce an averaged version of the previous variance, defined in the following way.
\begin{defi}
\label{def1}
We define the variance of $f$ in arithmetic progressions by
\begin{equation}
\label{variancedef}
V(Q,f)=\sum_{q\leq Q}\sum_{h|q}\sum_{\substack{a\ \bmod{q}\\ (a,q)=h}}\bigg|\sum_{\substack{n\leq N\\ n\equiv a\ \bmod{q}}}f(n)-\frac{1}{\phi(q/h)}\sum_{\substack{n\leq N\\ (n,q)=h}}f(n)\bigg|^{2}.
\end{equation}
\end{defi}
An asymptotic equality for $V(Q, d_2)$ has been established by Motohashi \cite{M}, whereas for $V(Q, d_k)$ by de la Bret\`{e}che and Fiorilli \cite{BF}; for a smooth version of $V(Q, d_k)$, in which the function $d_k(n)$ is twisted with a smooth weight, the result is contained in the paper of Rodgers and Soundararajan \cite{RS}. It is important to note that the last two articles deal only with values of $Q$ lying in a limited range. More specifically, for any $\delta>0$ it is roughly required that $N^{1/(1+2/k-\delta)}\leq Q\leq N^{1/\delta}$.
\subsection{Statement of the main results}
The present paper studies the question of finding a lower bound for the quantity $V(Q,f)$ when considering suitable generalizations $f$ of the divisor functions introduced above. The main reference on this problem is the Harper and Soundararajan's paper \cite{HS}, in which the authors set up the bases for the study of lower bounds of variances of complex sequences in arithmetic progressions. More precisely, they showed that for a wide class of functions with a controlled growth we can lower bound the variance \eqref{variancedef} with the $L^2$-norm of the exponential sum with coefficients $f(n)$ over a large portion of the circle (namely, the so called union of minor arcs). Since for functions that fluctuate like random we usually expect that such integral makes the largest contribution compared to that on the complementary portion (note how here the situation is the opposite of what happens in some common additive problems, like in the three--primes problem), we are led to the following heuristic 
$$V(Q,f)\gg Q\int_{0}^{1}|\sum_{n\leq N}f(n)e(n\varphi)|^2d\varphi,$$
where $e(n\varphi)$ stands for $e^{2\pi in\varphi}$, which by Parseval's identity can be rewritten as
\begin{equation}
\label{conjestimate}
V(Q,f)\gg Q\sum_{n\leq N}|f(n)|^2.
\end{equation}
This lower bound has been proven to hold when $f(n)=\Lambda(n)$ and $f(n)=d_k(n)$, for $k\geq 2$ a positive integer and $Q$ in the range $N^{1/2+\delta}\leq Q\leq N$, for any small $\delta>0$, by Theorem 1 and Theorem 2 in \cite{HS}. Here $\Lambda(n)$ indicates as usual the Von Mangoldt function. 
 
The aim of this paper is to show that the new powerful method introduced in \cite{HS} can be used to prove the validity of \eqref{conjestimate} for every $\alpha$--fold divisor function $d_\alpha(n)$, defined as the $n$-th coefficient in the Dirichlet series of $\zeta(s)^{\alpha}$ on the half plane $\Re(s)>1$, except for some specific values of $\alpha$. The result is contained in the following theorem.
\begin{thm}
\label{cor1}
Let $\delta>0$ sufficiently small and consider $N^{1/2+\delta}\leq Q\leq N$. For any complex number $\alpha\not\in -\mathbb{N}\cup \{1\}$, we have
\begin{equation}
\label{statementcor1}
V(Q,d_\alpha)\gg_{\alpha,\delta} Q\sum_{n\leq N}|d_\alpha(n)|^2,
\end{equation}
if $N$ large enough with respect to $\alpha$ and $\delta$.
\end{thm}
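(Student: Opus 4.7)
The plan is to deduce Theorem \ref{cor1} as a special case of the paper's general lower bound for the variance of multiplicative functions in the class of \emph{generalized divisor functions}. That is, I would first verify that the function $f=d_\alpha$ satisfies the hypotheses required by the general framework (in which the main theorem's reduction to a minor-arc $L^2$ integral, in the spirit of Harper--Soundararajan, has already been carried out), and then Theorem \ref{cor1} would follow directly.

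To set up the verification, I would record the basic facts about $d_\alpha$. Since $d_\alpha(n)$ is defined through $\zeta(s)^\alpha=\sum_n d_\alpha(n)n^{-s}$, the function $d_\alpha$ is multiplicative with $d_\alpha(p)=\alpha$ and $d_\alpha(p^k)=\binom{\alpha+k-1}{k}$. Thus the average value on primes, which plays the role of the parameter $\kappa$ in the general framework, equals the single complex number $\alpha$. The exclusion $\alpha=1$ corresponds to the degenerate case $\kappa=1$ in which \eqref{conjestimate} is trivially false (since $d_1\equiv 1$), while the exclusion $\alpha\in -\mathbb{N}$ rules out those values at which $\zeta(s)^\alpha$ degenerates and $d_\alpha$ is supported on a thin set (for instance $d_{-1}=\mu$), causing the Selberg--Delange asymptotics to collapse.

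Next I would verify the quantitative hypotheses. The essential inputs are a pointwise bound of the form $|d_\alpha(n)|\leq d_k(n)$ for any integer $k\geq \lceil|\alpha|\rceil$, which follows from termwise comparison $|\binom{\alpha+j-1}{j}|\leq \binom{k+j-1}{j}$ and furnishes the required moment control; together with Selberg--Delange type asymptotics
$$\sum_{n\leq N}d_\alpha(n)\sim C(\alpha)\,N(\log N)^{\alpha-1},\qquad \sum_{n\leq N}|d_\alpha(n)|^2\sim C'(\alpha)\,N(\log N)^{|\alpha|^2-1},$$
with $C(\alpha),C'(\alpha)$ computable in terms of $\alpha$ and Euler products over primes. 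Comparing the exponents one observes $|\alpha|^2-1-2(\Re\alpha -1)=|\alpha-1|^2>0$ exactly when $\alpha\neq 1$, which is why the constraint $\alpha\neq 1$ is sharp: the mean-square sum dominates the squared mean sum (the quantity that governs the major-arc contribution) by a power of $\log N$ with exponent precisely $|\alpha-1|^2$.

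The main obstacle, and the step requiring the most care, is uniformity in $\alpha$. Because the abstract asserts that sequences of $\alpha$ approaching $1$ in a suitable way are permitted, the Selberg--Delange asymptotics must be applied with an explicit error term tractable in $\alpha$, and the logarithmic gap $(\log N)^{|\alpha-1|^2}$ by which the mean square dominates the major-arc contribution must exceed the admissible error in the general framework's reduction from $V(Q,f)$ to the minor-arc $L^2$ integral. Once this is achieved by taking $N$ sufficiently large with respect to $|\alpha-1|^{-1}$ and $\delta$, the minor-arc integral dominates, and the general lower bound for generalized divisor functions yields \eqref{statementcor1}.
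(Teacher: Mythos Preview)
Your overall plan---verify that $d_\alpha$ satisfies the hypotheses of the paper's main theorem (Theorem \ref{thmalpha}) on generalized divisor functions and then apply it---is exactly what the paper does, and most of your checks are in the right spirit. The key verifications in the paper's proof are short: $d_\alpha(p)=\alpha$ gives \eqref{mainstatistic1}--\eqref{mainstatistic2} with $\beta=|\alpha-1|^2$ and arbitrarily large $A_1,A_2$ (by the prime number theorem), and the Euler product computation $\sum_{k\ge 0}d_\alpha(p^k)/p^k=(1-1/p)^{-\alpha}$ gives $c_0=1$, so \eqref{gammaassumpt} holds. With $\alpha$ fixed and $\beta>0$ constant, conditions \eqref{relationbetaA_1} are trivially met.

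There is one genuine omission: Theorem \ref{thmalpha} requires $\alpha\notin -\mathbb{N}\cup\{0\}$, so your reduction does not cover $\alpha=0$, which is allowed in Theorem \ref{cor1}. The paper handles this separately via the elementary computation in Proposition \ref{thm0,1}, giving $V(Q,d_0)=Q+O(\log Q)\gg Q=Q\sum_{n\le N}|d_0(n)|^2$. You should flag this case.

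Two smaller points. First, your discussion of uniformity in $\alpha$ is unnecessary here: the implicit constant in \eqref{statementcor1} is allowed to depend on $\alpha$, so for fixed $\alpha\neq 1$ the gap $|\alpha-1|^2>0$ is simply a positive constant and no delicate tracking is needed (that issue arises only in Theorem \ref{uniformalpha}). Second, your explanation for excluding $\alpha\in -\mathbb{N}$ is not quite the mechanism at play: the obstruction is that the coefficients $\lambda_j$ in the Selberg--Delange expansion carry factors $1/\Gamma(\alpha-j)$, and when $\alpha$ is a non-positive integer these all vanish, so one loses the asymptotic for the twisted sums $\sum_n f(n)c_q(n)$ on which the argument rests.
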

We now introduce the following class of functions, which extends that of divisor functions.
\begin{defi}
\label{defgendivfun}
A \textit{generalized divisor function} is a multiplicative function for which there exist a complex number $\alpha$ and positive real numbers $\beta, A_1, A_2$ such that the following statistics hold
\begin{align}
\label{mainstatistic1}
&\sum_{p\leq x}f(p)\log p=\alpha x+O\bigg(\frac{x}{(\log x)^{A_1}}\bigg)\ \ \ (2\leq x\leq N),\\
\label{mainstatistic2}
&\sum_{p\leq x}|f(p)-1|^2\log p=\beta x+O\bigg(\frac{x}{(\log x)^{A_2}}\bigg)\ \ \ (2\leq x\leq N)
\end{align}
and such that $|f(n)|\leq d_{\kappa}(n)$, for a constant $\kappa>0$ and every $N$-smooth positive integer $n$, i.e. for any $n$ divisible only by prime numbers smaller than $N$.
\end{defi}
In \eqref{mainstatistic1} and \eqref{mainstatistic2} the sums are over prime numbers $p$ and we will keep such notation throughout the rest of this paper. 

\begin{rmk}
From \eqref{mainstatistic1} we deduce that when $\alpha\neq 0$
\begin{align*}
|\alpha|N\bigg(1+O\bigg(\frac{1}{(\log N)^{A_1}}\bigg)\bigg)&=|\sum_{p\leq N}f(p)\log p|\\
&\leq \sum_{p\leq N}|f(p)|\log p\\
&\leq \sum_{p\leq N}\kappa\log p\\
&=\kappa N\bigg(1+O\bigg(\frac{1}{(\log N)^{A_1}}\bigg)\bigg),
\end{align*}
by the prime number theorem (see e.g. \cite[Theorem 6.9]{MV}). We conclude that for any $\alpha$ we have $|\Re(\alpha)|\leq |\alpha|\leq \kappa(1+O(1/(\log N)^{A_1}))$. Similarly, but using \eqref{mainstatistic2}, we get $\beta\leq (\kappa+1)^2(1+O(1/(\log N)^{A_2}))$. In particular, we deduce that $|\alpha|\leq \kappa+1$ and $\beta\leq (\kappa+2)^2$, if $N$ large enough in terms of $\kappa,A_1,A_2$ and the implicit constants \eqref{mainstatistic1}--\eqref{mainstatistic2}. By the monotonicity of $d_{\kappa}(n)$ as function of $\kappa>0$ and by replacing $\kappa$ with $\kappa+1$, we may thus assume that $\kappa>1$ and $|\alpha|\leq \kappa$ and $\beta\leq (\kappa+1)^2$.
\end{rmk}

We define for future reference and for the sake of readiness the quantity 
$$\kappa(\alpha,\beta):=(\kappa+1)^2+\kappa-\Re(\alpha)-\beta+4\geq 4.$$ 

We observe that when $f=d_\alpha(n)$ the equations \eqref{mainstatistic1}--\eqref{mainstatistic2} are trivially satisfied by the prime number theorem, whose reference above, with $\beta=|\alpha-1|^2$, $\kappa=|\alpha|+2$ and any $A_1,A_2 >0$. When $\alpha\neq 0$, Theorem \ref{cor1} is then a corollary of the following \emph{main} result.
\begin{thm}
\label{thmalpha}
Let $\delta$ be a sufficiently small positive real number and $N$ be a large positive integer. Suppose that $N^{1/2+\delta}\leq Q\leq N$. Let $f(n)$ be a generalized divisor function as in Definition \ref{defgendivfun} with $\alpha\not\in -\mathbb{N}\cup \{0\}$. Furthermore, assume that 
\begin{align}
\label{relationbetaA_1}
&A_1>\max\{\kappa(\alpha,\beta),\kappa+2\};\\
&A_2>A_1-\kappa(\alpha,\beta)+1;\nonumber\\
&\beta\geq (\log N)^{\kappa(\alpha,\beta)-A_1};\nonumber\\
& |\Gamma(\alpha)|\leq \log N,\nonumber
\end{align}
where $\Gamma(\alpha)$ is the Gamma function. Finally, let
$$c_0=\prod_{p\leq N}\bigg(1+\frac{f(p)}{p}+\frac{f(p^2)}{p^2}+\cdots\bigg)\bigg(1-\frac{1}{p}\bigg)^{\alpha}$$
and suppose that
\begin{equation}
\label{gammaassumpt}
(\log N)^{1-\delta}|c_0|\geq 1.
\end{equation}
Then we have
\begin{equation}
\label{variance1}
V(Q,f)\gg\bigg|\frac{c_0\beta}{\Gamma(\alpha)}\bigg|^2 Q\sum_{n\leq N}|f(n)|^2.
\end{equation}
The implicit constant above may depend on $\delta,\kappa,A_1,A_2$ and the implicit constants in \eqref{mainstatistic1}--\eqref{mainstatistic2} and we take $N$ large enough depending on all of these parameters.
\end{thm}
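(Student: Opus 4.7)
The plan is to adapt the minor-arc framework of Harper and Soundararajan \cite{HS} from $d_k$ to the wider class of generalized divisor functions. First, using $|f(n)|\le d_\kappa(n)$ on $N$-smooth $n$ together with the Gallagher-type manipulations used in \cite{HS}, I would obtain the reduction
\[
V(Q,f)\;\gg\;Q\int_{\mathfrak m(R)}\bigl|S(\varphi)\bigr|^2\,d\varphi,\qquad S(\varphi):=\sum_{n\le N}f(n)e(n\varphi),
\]
where $\mathfrak m(R)$ is the minor-arc set with denominator threshold $R$ of size roughly $N/Q^{1-\eta}$ for some small $\eta>0$ (permitted by $Q\ge N^{1/2+\delta}$). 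The error terms in this reduction are absorbed by Shiu-type bounds for $d_\kappa^2$, which is why the hypothesis $A_1>\kappa+2$ enters.

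By Parseval $\int_0^1|S(\varphi)|^2\,d\varphi=\sum_{n\le N}|f(n)|^2$, so the task reduces to showing that the major-arc integral is bounded away from the total $L^2$-mass by a multiplicative factor of size $c\bigl|c_0\beta/\Gamma(\alpha)\bigr|^2$. For this, I would expand $e(na/q)=q^{-1}\sum_{d\mid q}c_{q/d}(a)\mathbf 1_{d\mid n}$ and reduce the major-arc evaluation to first moments $\sum_{n\le N,\;d\mid n}f(n)$, which I would compute by the Selberg--Delange method. The hypotheses \eqref{mainstatistic1}, $|\Gamma(\alpha)|\le\log N$, and \eqref{gammaassumpt} yield an estimate of shape
\[
\sum_{m\le x}f(m)=\frac{c_0}{\Gamma(\alpha)}\,x(\log x)^{\alpha-1}+O\bigl(x(\log x)^{\Re\alpha-1-A}\bigr)
\]
for a suitable $A>0$, and summing the resulting contributions over $q\le R$ (using the standard identity $\sum_{(a,q)=1}|c_q(a)|^2=\phi(q)$) produces a major-arc mass of order $|c_0/\Gamma(\alpha)|^2\,N(\log N)^{2\Re\alpha-1}$.

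To close the comparison I would apply the Selberg--Delange method a second time to $|f|^2$. The function $|f|^2$ is itself a generalized divisor function, with effective ``$\alpha$''-parameter $\beta+2\Re\alpha-1$, as one sees from $|f(p)|^2=|f(p)-1|^2+2\Re f(p)-1$ combined with \eqref{mainstatistic1}--\eqref{mainstatistic2}. This gives an asymptotic of shape $\sum_{n\le N}|f(n)|^2\asymp|c_0|^2\,N(\log N)^{\beta+2\Re\alpha-2}$, so that the ratio between the major-arc mass and the total $L^2$-norm is $1-c|c_0\beta/\Gamma(\alpha)|^2$ up to error terms. The hypotheses $\beta\ge(\log N)^{\kappa(\alpha,\beta)-A_1}$ and $A_2>A_1-\kappa(\alpha,\beta)+1$ are precisely what is needed for this gap to dominate the error.

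The main obstacle is performing the Selberg--Delange analysis with fully uniform control in $\alpha$: since $\alpha$ is complex and is allowed to approach $1$ or a non-positive integer as $N\to\infty$, one must track the dependence of $c_0$, of $\Gamma(\alpha)$, and of the remainder on $\alpha$ throughout the contour integration, and moreover do so for the twisted sums needed on each major arc. The assumptions $|\Gamma(\alpha)|\le\log N$ and $(\log N)^{1-\delta}|c_0|\ge 1$ in Theorem \ref{thmalpha} are designed precisely for this purpose, but verifying that the resulting error terms do not swallow the final factor $|c_0\beta/\Gamma(\alpha)|^2$ will be the most delicate part of the argument.
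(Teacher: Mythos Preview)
Your overall strategy diverges from the paper at the crucial step, and the divergence introduces a genuine gap. After the reduction to the minor-arc integral (which is indeed Proposition~\ref{Prop5.1}), the paper does \emph{not} proceed by Parseval minus the major-arc contribution. Instead it applies Cauchy--Schwarz against an auxiliary exponential sum $\tilde{\mathcal F}(\varphi)=\sum_{n\le N}\bigl(\sum_{r\mid n,\,r\le R}g(r)\bigr)e(n\varphi)$ with $g(p)=\pm(f(p)-1)$, and then (Proposition~\ref{newprop10}) converts the resulting bilinear integral into a sum of the shape $\sum_{KQ_0<q\le R}\bigl|\sum_{r\le R,\,q\mid r}g(r)/r\bigr|\cdot\bigl|\sum_{n\le N}f(n)c_q(n)\bigr|$. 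The heart of the argument is then a \emph{lower bound} for $\sum_{n\le N}f(n)c_q(n)$ for a carefully restricted set of large moduli $q=tss'$ (squarefree, with a single huge prime factor $t\sim N^{1/2-3\delta/4}$, etc.), obtained via the Granville--Koukoulopoulos version of Selberg--Delange applied to coprimality sums, together with a delicate combinatorial treatment of the $s$- and $s'$-parts. The factor $\beta$ in the final bound arises from the sum $\sum_q |g(q)|^2/q$ and from the average of $|\theta_{N,\alpha}(t)(f(t)-1)|$ over the large prime $t$ (Lemma~\ref{lemhyposumovert}), not from any subtraction.

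Your proposed route --- compute $\int_{\mathfrak M}|S|^2$ and $\sum_n|f(n)|^2$ separately and subtract --- would require both evaluations to be accurate to within a relative error smaller than $|c_0\beta/\Gamma(\alpha)|^2$. Since $\beta$ may be as small as $(\log N)^{\kappa(\alpha,\beta)-A_1}$, this is far beyond what one can extract from Selberg--Delange with only the hypotheses \eqref{mainstatistic1}--\eqref{mainstatistic2}: the error terms in those expansions are powers of $\log N$ determined by $A_1,A_2$, and there is no mechanism for the two main terms to match so precisely that a factor of $\beta^2$ survives the cancellation. Moreover, your identification of the leading constant for $\sum_n|f(n)|^2$ with $|c_0|^2$ is not correct --- it is a different Euler product $c_0(|f|^2,\beta+2\Re\alpha-1)$ --- so the claimed ratio $1-c|c_0\beta/\Gamma(\alpha)|^2$ does not emerge from the comparison you describe. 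The paper's Cauchy--Schwarz device is precisely what sidesteps this loss: it produces a \emph{positive} lower bound directly, with $\beta$ entering multiplicatively through the choice of $g$, rather than as the residue of a near-cancellation.
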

\begin{rmk} We note that we clearly have $A_1,A_2>1$. Also, the implicit constant in \eqref{variance1} \emph{does not} depend on $\alpha$. 
\end{rmk}
\begin{rmk} The last condition in \eqref{relationbetaA_1} has been inserted to avoid the scenario in which $\alpha$ is too close to a pole of the Gamma function, which would make us losing control on the average of $f(n)$ over integers $n\leq N$, thus precluding us from producing a lower bound for $V(Q,f)$. However, by slightly modifying the conditions on $A_1,A_2$ and $\beta$ in \eqref{relationbetaA_1}, as well as the definition of other parameters involved in the proof, it may be possible to relax such restriction to make $|\Gamma(\alpha)|$ indeed smaller than a suitable larger power of $\log N$. 
\end{rmk}
\begin{rmk}
Condition \eqref{gammaassumpt} makes sure that $f$ looks nice on the primes $p\leq \kappa$, thus excluding certain patterns of $f(p)$ where $c_0$ is too close to $0$. However, in this last situation, one could still be able to replace \eqref{variance1} with a lower bound of a different shape by carefully understanding some non-trivial derivatives of the Euler product of the Dirichlet series of $f$. 
\end{rmk}
Theorem \ref{thmalpha} is quite technical, but it does not merely represent an improvement upon Theorem \ref{cor1}. Indeed it allows us to lower bound the variance of multiplicative functions that arise from divisor functions, such as for instance positive integer powers of $d_2(n)$ or products of divisor functions as $d_2(n)d_3(n)$, but also, and most importantly, of those that behave very differently from the simple divisor functions. As a concrete example of this last case we state here the following corollary.
\begin{cor}
\label{sumoftwosquares}
Let $\delta>0$ sufficiently small and consider $N^{1/2+\delta}\leq Q\leq N$. Let $S$ be the set of all integer sums of two squares. Then we have
\begin{equation}
\label{variancesumoftwosquares}
V(Q, \textbf{1}_{S})\gg_{\delta} \frac{QN}{\sqrt{\log N}},
\end{equation}
if $N$ is large enough with respect to $\delta$.
\end{cor}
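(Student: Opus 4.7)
The plan is to deduce Corollary \ref{sumoftwosquares} as a direct application of Theorem \ref{thmalpha} to the multiplicative function $f = \mathbf{1}_S$, after verifying that $\mathbf{1}_S$ is a generalized divisor function in the sense of Definition \ref{defgendivfun} with appropriate parameters, and then invoking the Landau--Ramanujan theorem to evaluate $\sum_{n\leq N}|\mathbf{1}_S(n)|^2$.

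First I would record the multiplicative structure of $\mathbf{1}_S$: by the classical characterization, $n\in S$ iff every prime $p\equiv 3\pmod 4$ divides $n$ to an even power. Thus $\mathbf{1}_S(p)=1$ when $p=2$ or $p\equiv 1\pmod 4$, and $\mathbf{1}_S(p)=0$ when $p\equiv 3\pmod 4$. An application of the prime number theorem in arithmetic progressions (Siegel--Walfisz) then gives, for any prescribed $A_1,A_2>0$,
\begin{align*}
\sum_{p\leq x}\mathbf{1}_S(p)\log p &= \tfrac{1}{2}x+O\!\left(\tfrac{x}{(\log x)^{A_1}}\right),\\
\sum_{p\leq x}|\mathbf{1}_S(p)-1|^2\log p &= \tfrac{1}{2}x+O\!\left(\tfrac{x}{(\log x)^{A_2}}\right),
\end{align*}
valid in $2\leq x\leq N$. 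Hence Definition \ref{defgendivfun} is satisfied with $\alpha=\beta=1/2$ and (say) $\kappa=2$, using the trivial majorization $|\mathbf{1}_S(n)|\leq 1\leq d_\kappa(n)$. Observe that $\alpha=1/2\notin -\mathbb{N}\cup\{0,1\}$, so both Theorem \ref{cor1}'s flavor and, crucially, Theorem \ref{thmalpha} apply in principle.

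Next I would verify the remaining hypotheses in \eqref{relationbetaA_1} and \eqref{gammaassumpt}. Since $\alpha,\beta$ are absolute constants, the quantity $\kappa(\alpha,\beta)$ is an absolute constant too, so we can simply \emph{choose} $A_1,A_2$ large enough absolute constants to make the first two inequalities hold; the third one then reduces to $1/2\geq (\log N)^{\kappa(\alpha,\beta)-A_1}$, which is trivial for large $N$, and $|\Gamma(1/2)|=\sqrt{\pi}\leq \log N$ is immediate. The most delicate verification is the non-vanishing of $c_0$: with our $\alpha=1/2$, the Euler product
$$c_0 \;=\; \prod_{p\leq N}\bigg(\sum_{k\geq 0}\frac{\mathbf{1}_S(p^k)}{p^k}\bigg)\bigg(1-\frac{1}{p}\bigg)^{1/2}$$
splits according to the residue of $p\bmod 4$ and factorises, after elementary manipulations, as $\sqrt{2}\prod_{p\equiv 1(4)}(1-1/p)^{-1/2}\prod_{p\equiv 3(4)}(1+1/p)^{-1}(1-1/p)^{-1/2}$ up to $O((\log N)^{-1})$ corrections coming from primes $p>N$. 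This is exactly (a truncation of) the Landau--Ramanujan constant, which converges to a positive absolute constant as $N\to\infty$; in particular $|c_0|\gg 1$, so \eqref{gammaassumpt} holds for $N$ large.

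Finally I would plug into Theorem \ref{thmalpha}: the theorem gives
$$V(Q,\mathbf{1}_S)\;\gg\;\bigg|\frac{c_0\,\beta}{\Gamma(1/2)}\bigg|^2 Q\sum_{n\leq N}\mathbf{1}_S(n),$$
and since $|c_0|$, $\beta=1/2$ and $\Gamma(1/2)=\sqrt{\pi}$ are positive constants the prefactor is $\gg 1$. The Landau--Ramanujan theorem supplies $\sum_{n\leq N}\mathbf{1}_S(n)\gg N/\sqrt{\log N}$, which yields exactly \eqref{variancesumoftwosquares}. I do not expect any serious obstacle here: the entire content of the corollary is hypothesis-checking for Theorem \ref{thmalpha} together with a classical density estimate. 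The only mildly non-routine points are the identification of $\alpha$ and $\beta$ via Siegel--Walfisz (which is why the uniformity in all the $A_i$ in Definition \ref{defgendivfun} matters) and the positivity of the truncated Landau--Ramanujan product $c_0$.
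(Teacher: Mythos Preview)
Your proposal is correct and follows essentially the same route as the paper's proof: verify that $\mathbf{1}_S$ is a generalized divisor function with $\alpha=\beta=1/2$ via the prime number theorem in arithmetic progressions, check that $c_0\gg 1$ (the paper cites Mertens' theorem in progressions where you invoke the Landau--Ramanujan constant explicitly), and apply Theorem \ref{thmalpha}. Your explicit appeal to Landau--Ramanujan for $\sum_{n\leq N}\mathbf{1}_S(n)\gg N/\sqrt{\log N}$ is exactly what the paper leaves implicit when it says ``conclude by using \eqref{variance1}''.
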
 
\begin{rmk} Note that the lower bound \eqref{variancesumoftwosquares} is consistent with the Parseval heuristic \eqref{conjestimate}. 
\end{rmk}
The distribution of sums of two squares in arithmetic progressions has been studied by a number of authors, among which Fiorilli \cite{F}, Iwaniec \cite{I}, Lin--Zhan \cite{LZ} and Rieger \cite{RI1, RI2}.
\subsection{About the variance of certain multiplicative functions in arithmetic progressions.}
For suitably chosen parameters $K,Q$ and $Q_0$, we are going to define the so called set of major arcs ${\frak M} = {\frak M}(Q_0,Q;K)$, consisting of those $\varphi \in {\Bbb R}/{\Bbb Z}$ having an approximation $|\varphi-a/q| \le K/(qQ)$, with $q\le K Q_0$ and $(a,q)=1$. Let ${\frak m}$, the minor arcs, denote the complement of the major arcs in ${\Bbb R}/{\Bbb Z}$. Clearly, this last set occupies almost the totality of the circle, depending on $K,Q$ and $Q_0$, and it consists of real numbers well approximated by rational fractions with large denominator. 

The idea exploited in \cite{HS} was to connect the variance in arithmetic progressions with the minor arc contribution of the exponential sum with coefficients $f(n)$. This point of view was already widespread and present in the literature, like for example in the works of Liu \cite{LI1, LI2} and Perelli \cite{PE}. However, as explained in \cite{HS}, the previous arguments relied on the connection between character sums and exponential sums (similarly as in the usual deductions of the multiplicative large sieve inequality), which can only be made to work for the $\Lambda$-function sequence or for other sequences without small prime factors. In contrast, as pointed out in \cite{HS}, Harper and Soundararajan avoided the use of Dirichlet characters in favour of Hooley's approach, connecting the variance of $f(n)$ in
arithmetic progressions with the variance of the exponential sums $\sum_{n\leq N}f(n)e(na/q)$. By positivity
of the variance one can discard the major arc contribution to the latter, leaving only a minor arc contribution and some terms
involving Ramanujan sums $c_d(n)$ with $d$ fairly large. Those sums are simply defined as 
$$c_d(n)=\sum_{\substack{a=1,\dots,d\\ (a,d)=1}}e(an/d)$$
and characterized by their main property that we will make use of several times in the future
\begin{equation}
\label{mainpropc_q}
c_d(n)=\sum_{k|(n,d)}k\mu(d/k),
\end{equation} 
where $\mu(n)$ is the Mobius function. In details, the result proved in \cite{HS} is the following.
\begin{prop} 
\label{Prop5.1} 
Let $N$ be large positive integer, $K\ge 5$ be a parameter and $Q,Q_0$ be such that 
\begin{equation} 
\label{eq0} 
K \sqrt{N\log N} \le Q \le N\ \textrm{and}\ \ \frac{N \log N}{Q} \le Q_0 \le \frac{Q}{K^2}. 
\end{equation} 
Keeping notations as above, we then have 
\begin{align}
\label{estimateprop1}
V(Q,f) &\ge Q\Big(1+ O\Big(\frac{\log K}{K}\Big)\Big) \int_{\frak m} |{\mathcal F}(\varphi)|^2d\varphi + O \Big( \frac{NK}{Q_0} \sum_{n\le N} |f(n)|^2 \Big)\\
&+O\bigg(\sum_{q\le Q } \frac{1}{q} \sum_{\substack {d|q \\ d>Q_0}} \frac{1}{\phi(d)} \Big| \sum_{n\leq N} f(n) c_d(n)\Big|^2\bigg),\nonumber
\end{align}
where $\mathcal{F}(\varphi):=\sum_{n\leq N}f(n)e(n\varphi)$.
\end{prop}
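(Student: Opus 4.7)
I follow the Hooley-style approach outlined in the introduction: express $V(Q,f)$ via Plancherel on each group $\mathbb Z/q\mathbb Z$ as a sum of $|\mathcal F(a/d)-(\text{mean correction})|^2$ over rationals $a/d$, discard the ``major-arc'' rationals ($d\le Q_0$) by positivity, separate the main term from the correction (extracting the Ramanujan-sum error), and convert the remaining sum into $\int_{\mathfrak m}|\mathcal F|^2$ by a smoothing/mean-value argument.

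Using additive orthogonality $\mathbf 1_{n\equiv a\,(q)}=\frac{1}{q}\sum_{b\bmod q}e(b(n-a)/q)$ and Plancherel on $\mathbb Z/q\mathbb Z$, I first obtain
$$V(Q,f)=\sum_{q\le Q}\frac{1}{q}\sum_{b\bmod q}\bigl|\mathcal F(b/q)-\widetilde m_q(b)\bigr|^{2},$$
where $\widetilde m_q$ is the discrete Fourier transform of the mean-term map $a\mapsto \phi(q/(a,q))^{-1}\sum_{(n,q)=(a,q)}f(n)$. Because this map depends only on $(a,q)$, a direct computation gives $\widetilde m_q(b)=\sum_{h\mid q}\phi(q/h)^{-1}\bigl(\sum_{(n,q)=h}f(n)\bigr)c_{q/h}(b)$. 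I reparametrise $b/q=a/d$ in lowest terms; the $d=1$ term vanishes, and by positivity I retain only $d>Q_0$. Applying the elementary inequality $|x-y|^2\ge(1-\eta)|x|^2-\eta^{-1}|y|^2$ with $\eta\asymp\log K/K$ separates $|\mathcal F(a/d)|^2$ from $|\widetilde m_q(b)|^2$; expanding the latter with \eqref{mainpropc_q}, summing over $(a,d)=1$ and $q\le Q$, and using $\sum_{n\le N}f(n)c_d(n)=\sum_{(a,d)=1}\mathcal F(a/d)$ reconstitutes the stated Ramanujan-sum error term.

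It then remains to convert $\sum_{q\le Q}q^{-1}\sum_{d\mid q,\,d>Q_0}\sum_{(a,d)=1}|\mathcal F(a/d)|^2$ into $Q\int_{\mathfrak m}|\mathcal F|^2\,d\varphi$. The divisor sum $\sum_{q\le Q,\,d\mid q}q^{-1}\asymp d^{-1}\log(Q/d)$ produces a factor of $Q$ once paired with arcs of width $\asymp 1/(dQ)$; around each reduced fraction $a/d$ in the dominant range $d\in(KQ_0,Q/K]$ I place an arc $I_{a,d}=\{|\varphi-a/d|\le K/(2dQ)\}$, which by \eqref{eq0} are pairwise disjoint and contained in $\mathfrak m$. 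A Gallagher/Montgomery-type mean-value inequality converts $|\mathcal F(a/d)|^2$ into $|I_{a,d}|^{-1}\int_{I_{a,d}}|\mathcal F|^2\,d\varphi$ up to an oscillation error that telescopes to $O(NK/Q_0\sum|f(n)|^2)$ via the Parseval identity $\int_{0}^{1}|\mathcal F|^2\,d\varphi=\sum_{n}|f(n)|^2$; combining all $d$ reassembles $Q(1+O(\log K/K))\int_{\mathfrak m}|\mathcal F|^2\,d\varphi$. I expect this smoothing step to be the principal obstacle: obtaining the integral over \emph{all} of $\mathfrak m$ with the sharp $1+O(\log K/K)$ constant requires a careful choice of approximating kernel (e.g.\ Beurling--Selberg or Fej\'er type) and a delicate accounting of how the arcs $I_{a,d}$ tile $\mathfrak m$ under the two-sided constraint \eqref{eq0}.
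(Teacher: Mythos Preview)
The paper does not give its own proof of this proposition: it is introduced with the words ``the result proved in \cite{HS} is the following'' and is simply quoted from Harper--Soundararajan. So there is no in-paper argument to compare against beyond the informal description of Hooley's method in the introduction, and the intended ``proof'' here is a citation.

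That said, your sketch follows the Hooley route the introduction describes, and the skeleton is sound: Plancherel on $\mathbb{Z}/q\mathbb{Z}$ does give the Farey-fraction expansion you wrote, your formula for $\widetilde m_q(b)$ is correct, the $d=1$ term does cancel, and dropping $d\le Q_0$ by positivity is exactly the mechanism. Two places are thinner than they look. First, turning $\sum_{(a,d)=1}|\widetilde m_q(b)|^2$ into the clean $\phi(d)^{-1}\bigl|\sum_n f(n)c_d(n)\bigr|^2$ form is not just ``expanding with \eqref{mainpropc_q}'': one needs an orthogonality relation for Ramanujan sums over reduced residues (or the observation that $\widetilde m_q$, restricted to fractions of exact denominator $d$, collapses to a single term), and this should be spelled out. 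Second, in the arc step the error $NK/Q_0\sum|f(n)|^2$ does not ``telescope'' from Parseval alone; it comes from the derivative term in a Gallagher--Sobolev inequality of the shape
\[
|\mathcal F(a/d)|^2 \ \ll\ |I_{a,d}|^{-1}\int_{I_{a,d}}|\mathcal F|^2\,d\varphi \;+\; |I_{a,d}|\int_{I_{a,d}}|\mathcal F'|^2\,d\varphi,
\]
after which Parseval on $\mathcal F'(\varphi)=2\pi i\sum_n n f(n)e(n\varphi)$ supplies the factor $N^2$, and the arc length $\asymp K/(dQ)$ with $d>Q_0$ gives the $NK/Q_0$ shape. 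Your closing remark that this tiling/kernel step is the delicate one is exactly right; for a rigorous treatment with the sharp $1+O((\log K)/K)$ constant the paper simply defers to \cite{HS}, and so should you.
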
 
It is clear that Proposition \ref{Prop5.1} boils the variance question down to find a lower bound for the integral over the minor arcs of $|\mathcal{F}(\varphi)|^2$. The particular form of the above result suggests a heuristic leading to \eqref{conjestimate}. Indeed, if the contribution of $\mathcal{F}(\varphi)$ on the minor arcs exceeds that on the major arcs, we can approximate the integral in \eqref{estimateprop1} with the integral over all the circle, obtaining 
$$\int_{\frak m} |{\mathcal F}(\varphi)|^2d\varphi\approx \sum_{n\leq N}|f(n)|^2,$$
by Parseval's identity. For instance, we expect such behaviour for all the functions $f(n)=d_{\alpha}(n)$ with $\alpha\neq 1$. Indeed, we believe that those divisor functions do not correlate with the exponential phase. However, when $\alpha$ equals a negative integer, the situation is possibly subtle, as shown by the following discussion. Consider for instance $\alpha=-1$. Now the sum $\mathcal{F}(\varphi)$ over the major arcs would be close to $\sum_{n\leq N}\mu(n)$ and by the squareroot cancellation principle it should be at most $N^{1/2+\eps}$, for an arbitrary small $\eps$. On the other hand, $\mathcal{F}(\varphi)$ roughly coincides with $\sum_{n\leq N}\mu(n)e(n\varphi)$ over the minor arcs, where by the random fluctuations of the prime numbers we expect a value of roughly $\sqrt{N}$, being a sum of $N$ pseudorandom phases. In conclusion, it is difficult to distinguish between major and minor arc contribution in the case of the Mobius function.

When instead $\alpha$ is either $0$ or $1$ the simple form of those divisor functions allows us to elementarily study the associated variance. The result is contained in the next proposition, which in the latter case highlights a different variance behaviour from the Parseval heuristic compared to the other divisor functions.
\begin{prop}
\label{thm0,1}
For any $Q\geq 1$, we have
\begin{align*}
\label{variance0,1}
V(Q,d_{0})&=Q+O(\log Q),\\
V(Q,d_{1})&\ll Q^2.\nonumber
\end{align*}
\end{prop}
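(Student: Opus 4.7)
\textbf{Proof proposal for Proposition \ref{thm0,1}.} The two functions are extremely simple: $d_0(n)=\mathbf{1}_{n=1}$ (since $\zeta(s)^0=1$), while $d_1(n)\equiv 1$ (since $\zeta(s)^1=\sum_n 1\cdot n^{-s}$). The plan is to compute both variances essentially by hand, with no analytic machinery needed.

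\emph{The case of $d_0$.} The key observation is that $\sum_{n\le N,\,n\equiv a\,(q)}d_0(n)=\mathbf{1}_{a\equiv 1\,(q)}$, and $\sum_{n\le N,\,(n,q)=h}d_0(n)=\mathbf{1}_{h=1}$. Hence the inner expression in \eqref{variancedef} vanishes identically when $h>1$, because then $h$ cannot equal $(1,q)=1$. For $h=1$ and $q\ge 3$ (so that $\phi(q)\ge 2$ and $a=1$ is genuinely a reduced residue), the sum over $a$ coprime to $q$ equals
$$\Big(1-\tfrac{1}{\phi(q)}\Big)^{2}+(\phi(q)-1)\cdot\tfrac{1}{\phi(q)^{2}}=1-\tfrac{1}{\phi(q)}.$$
For $q\in\{1,2\}$ one checks the contribution is $0$. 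Summing these contributions and invoking the standard estimate $\sum_{q\le Q}1/\phi(q)\ll \log Q$ (e.g.\ from \cite[Ex.~2.1.7]{MV} or by partial summation from $\sum_{q\le Q}1/\phi(q)\sim C\log Q$), one finds $V(Q,d_0)=(Q-2)-\sum_{3\le q\le Q}1/\phi(q)=Q+O(\log Q)$.

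\emph{The case of $d_1$.} The strategy is simply to bound each term by $O(1)$ and count triples. For any $q,h,a$ with $h=(a,q)$ one has
$$\sum_{\substack{n\le N\\ n\equiv a\,(q)}}1=\tfrac{N}{q}+O(1),\qquad \tfrac{1}{\phi(q/h)}\sum_{\substack{n\le N\\ (n,q)=h}}1=\tfrac{N}{q}+O\!\Big(\tfrac{2^{\omega(q/h)}}{\phi(q/h)}\Big)=\tfrac{N}{q}+O(1),$$
where the second identity writes $n=hm$ with $(m,q/h)=1$ and uses the standard lattice point count $\#\{m\le M:(m,k)=1\}=M\phi(k)/k+O(2^{\omega(k)})$. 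Subtracting and squaring, each summand in \eqref{variancedef} is $O(1)$. Since the total number of triples $(q,h,a)$ with $q\le Q$, $h\mid q$ and $a\,(\bmod\,q)$ is $\sum_{q\le Q}q\ll Q^{2}$, the bound $V(Q,d_1)\ll Q^{2}$ follows.

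\emph{Main obstacle.} There is no serious obstacle here: everything reduces to direct evaluation once one notices that $d_0$ is supported only at $n=1$. The only minor care needed is to isolate the small moduli $q\in\{1,2\}$ in the $d_0$ computation (where $\phi(q)=1$ makes the formula $1-1/\phi(q)$ degenerate) and to keep the error $O(2^{\omega(q/h)}/\phi(q/h))$ uniformly bounded in the $d_1$ computation; both are routine. The proposition is really a sanity check showing that for $\alpha=0$ the Parseval heuristic \eqref{conjestimate} of order $QN$ fails dramatically (one only gets $Q$), and for $\alpha=1$ the variance is dominated by the main term $N/q$ rather than by any fluctuation.
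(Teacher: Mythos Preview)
Your proof is correct and follows essentially the same approach as the paper: for $d_0$ you reduce to the $h=1$ contribution and compute $\sum_{q\le Q}(1-1/\phi(q))$, and for $d_1$ you bound each summand by $O(1)$ via the lattice-point count and sum over all triples. The only cosmetic differences are that you separate out $q\in\{1,2\}$ explicitly (the paper's formula $1-1/\phi(q)$ already vanishes there) and you write the error as $2^{\omega(q/h)}/\phi(q/h)$ rather than $d_2(q/h)/\phi(q/h)$; both are uniformly bounded, so this is immaterial.
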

\begin{proof}
Let us start with the function $d_0(n)$. It is clear that the contribution to \eqref{variancedef} is not zero only if $h=1$ and in such case the term inside the square reduces to $\textbf{1}_{a\equiv 1\pmod{q}}(a)-1/\phi(q)$. Thus we get
$$V(Q,d_0)=\sum_{q\leq Q}\left(1-\frac{1}{\phi(q)}\right)^{2}+\sum_{q\leq Q}\sum_{\substack{a\pmod{q}\\ (a,q)=1\\ a\not\equiv 1\pmod{q}}}\frac{1}{\phi(q)^{2}}=\sum_{q\leq Q}\left(1-\frac{1}{\varphi(q)}\right)=Q+O(\log Q),$$
by Landau's result \cite[p. 184]{L}. This concludes the proof of the first part of Proposition \ref{thm0,1} and of the case $\alpha=0$ in Theorem \ref{cor1}.

Let us now consider the function $d_1(n)\equiv 1$, for all $n\in\mathbb{N}$. We immediately see that
$$\sum_{\substack{n\leq N\\ n\equiv a\pmod{q}}}d_1(n)=\frac{N}{q}+O(1).$$
On the other hand, we have
$$\sum_{\substack{n\leq N\\ (n,q)=h}}d_1(n)=\frac{N}{q}\phi(q/h)+O(d_2(q/h)),$$
using \cite[ch. I, Theorem 2.8]{T} and \cite[ch. I, Theorem 2.13]{T}. Inserting these two identities in \eqref{variancedef}, we get
$$V(Q,f)\ll \sum_{q\leq Q}\sum_{h|q}\sum_{\substack{a\pmod{q}\\ (a,q)=h}}1\leq \sum_{q\leq Q} q\ll Q^2,$$
where we used that $d_2(n)/\phi(n)\ll 1$, for any $n\in\mathbb{N}$. 

This concludes the proof of Proposition \ref{thm0,1}.
\end{proof}
We insert here a discussion about the case $\alpha=0$, in which the variance of a specific class of functions can be easily estimated and its value does not match the Parseval heuristic.

Indeed, all the positive monotone non-increasing multiplicative functions satisfy \eqref{mainstatistic1} and \eqref{mainstatistic2} with $\alpha=0$ and $\beta=1$, since such functions must be of the shape $f(n)=n^{-\gamma}$, for a certain $\gamma>0$. Thus we observe that \eqref{conjestimate}, if true, holds only for certain values of $Q$ in the range $N^{1/2+\delta}\leq Q\leq N$. In fact, if we assume $\gamma<1/2$, then the variance \eqref{variancedef} can be seen to be $\ll Q^2$, which can be fairly small compared to $Q\sum_{n\leq N}f(n)^2\gg QN^{1-2\gamma}$. This is because for such class of functions we have the following bound
$$\bigg|\sum_{\substack{n\leq N\\ n\equiv a\pmod{q}}}f(n)-\frac{1}{\phi(q/h)}\sum_{\substack{n\leq N\\ (n,q)=h}}f(n)\bigg|\ll f(1)=1,$$
for any triple $a,q,h$ with $(a,q)=h$. Similar considerations hold if we replace non-increasing with non-decreasing.

When $\alpha=1$, Proposition \ref{thm0,1} states that the corresponding variance in arithmetic progression is at most a possibly large constant times $Q^2$, whereas the Parseval heuristic would suggest a lower bound of $QN$. However, one might wonder what happens for a sequence of divisor functions $f=d_{\alpha_N}$ for values of $\alpha_N$ close to $1$. In this case, Theorem \ref{thmalpha} still gives us a lower bound for the variance.
\begin{thm}
\label{uniformalpha}
Let $A>0$ be a real number and $\alpha_N=1+1/R(N)$, where $R(N)$ is a real non-vanishing function such that $|R(N)|\leq (\log N)^A$. Let $\delta>0$ small enough and $N^{1/2+\delta}\leq Q\leq N$. Then there exists a constant $C>0$ such that if $|R(N)|\geq C$ we have \footnote{In a forthcoming paper we will prove a stronger lower bound valid for larger values of $R(N)$, by pursuing a slightly different approach from what has been done here. In particular, whenever $|R(N)|\geq C(\log\log N)^2$, we are able to improve \eqref{varianceuniform} to:\\
$$V(Q,d_{\alpha_N})\gg_{\delta}\frac{QN}{R(N)^{2}}\log\bigg(\frac{\log N}{\log(N/Q)}\bigg),$$
if $N$ is large enough with respect to $\delta$ and $A$, which we expect to be best possible.}
\begin{equation}
\label{varianceuniform}
V(Q,d_{\alpha_N})\gg_{\delta,A}\frac{QN}{R(N)^{4}}\exp\bigg(\bigg(2+\frac{1}{R(N)}\bigg)\frac{\log\log N}{R(N)}\bigg),
\end{equation}
if $N$ is large enough with respect to $\delta$ and $A$.
\end{thm}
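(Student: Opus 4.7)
The plan is to deduce Theorem \ref{uniformalpha} directly from Theorem \ref{thmalpha} applied to $f(n)=d_{\alpha_N}(n)$, combined with a Selberg--Delange style mean-value asymptotic for $\sum_{n\leq N}d_{\alpha_N}(n)^2$ that is uniform in $\alpha_N$ near $1$.

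First, I would verify the hypotheses of Theorem \ref{thmalpha}. By the prime number theorem, $d_{\alpha_N}$ satisfies \eqref{mainstatistic1}--\eqref{mainstatistic2} with $\alpha=\alpha_N$, $\beta=(\alpha_N-1)^2=1/R(N)^2$, $\kappa=|\alpha_N|+2$ and any prescribed $A_1,A_2>0$. Taking the constant $C$ in $|R(N)|\geq C$ sufficiently large ensures $\alpha_N\notin-\mathbb{N}\cup\{0\}$ and keeps both $|\Gamma(\alpha_N)|$ (close to $\Gamma(1)=1$) and $\kappa(\alpha_N,\beta)$ (close to $(\kappa+1)^2+\kappa-1+4\approx 22$) bounded uniformly. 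Since $\beta\geq(\log N)^{-2A}$, choosing $A_1>\max\{\kappa(\alpha_N,\beta)+2A,\kappa+2\}$ and $A_2$ correspondingly large (depending only on $A$) satisfies all conditions in \eqref{relationbetaA_1}.

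Second, for the constant $c_0$ one has $\sum_{k\geq 0}d_{\alpha_N}(p^k)/p^k=(1-1/p)^{-\alpha_N}$, so
\begin{equation*}
c_0=\prod_{p\leq N}(1-1/p)^{-\alpha_N}(1-1/p)^{\alpha_N}=1,
\end{equation*}
and hypothesis \eqref{gammaassumpt} is trivial. The conclusion \eqref{variance1} of Theorem \ref{thmalpha} (whose implicit constant is independent of $\alpha$, per the remark following it) then gives
\begin{equation*}
V(Q,d_{\alpha_N})\gg\bigg|\frac{\beta}{\Gamma(\alpha_N)}\bigg|^{2}Q\sum_{n\leq N}d_{\alpha_N}(n)^{2}\asymp\frac{Q}{R(N)^{4}}\sum_{n\leq N}d_{\alpha_N}(n)^{2},
\end{equation*}
using $\Gamma(\alpha_N)\to 1$.

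Third, I would establish the uniform asymptotic for the mean square. Writing $d_{\alpha_N}(n)^2$ as a multiplicative function and comparing Euler factors prime by prime shows that
\begin{equation*}
\sum_{n=1}^{\infty}\frac{d_{\alpha_N}(n)^{2}}{n^{s}}=\zeta(s)^{\alpha_N^{2}}H(s,\alpha_N),
\end{equation*}
where $H(s,\alpha_N)$ is given by an Euler product absolutely convergent for $\Re(s)>1/2$, and is uniformly bounded above and bounded away from zero for $\alpha_N$ in a small real neighbourhood of $1$ (since $H(s,1)\equiv 1$ and the Euler factors depend continuously on $\alpha_N$). The Selberg--Delange theorem, applied with parameters uniform in this neighbourhood, then yields
\begin{equation*}
\sum_{n\leq N}d_{\alpha_N}(n)^{2}\asymp N(\log N)^{\alpha_N^{2}-1}=N\exp\bigg(\bigg(2+\frac{1}{R(N)}\bigg)\frac{\log\log N}{R(N)}\bigg),
\end{equation*}
since $\alpha_N^{2}-1=2/R(N)+1/R(N)^{2}$. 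Combining this with the previous display gives \eqref{varianceuniform}. The main technical point is the uniformity in $\alpha_N$ of the Selberg--Delange asymptotic as $\alpha_N\to 1$, but since $\alpha_N$ varies over a bounded real set and $H(1,\alpha_N)$, $\Gamma(\alpha_N^{2})$ are continuous and nonvanishing at $\alpha_N=1$, standard versions of the theorem (e.g.\ Tenenbaum, Chapter II.5) provide the required uniform constants.
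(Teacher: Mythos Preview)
Your proposal is correct and follows essentially the same route as the paper: verify the hypotheses of Theorem~\ref{thmalpha} for $f=d_{\alpha_N}$ (with $\beta=1/R(N)^2$, $c_0=1$, and $A_1,A_2$ chosen large in terms of $A$), apply it to obtain $V(Q,d_{\alpha_N})\gg \beta^2\,Q\sum_{n\le N}d_{\alpha_N}(n)^2$, and then input a uniform Selberg--Delange asymptotic $\sum_{n\le N}d_{\alpha_N}(n)^2\asymp N(\log N)^{\alpha_N^2-1}$. The only cosmetic difference is that the paper handles the mean square via the Granville--Koukoulopoulos version of Selberg--Delange (Theorem~\ref{GKthm} here), writing out the expansion and checking explicitly that the coefficients $c_j(\alpha_N^2)$ and the Gamma factors $\Gamma(\alpha_N^2-j)^{-1}$ are uniformly bounded for $1/2\le\alpha_N^2\le 3/2$, whereas you invoke the uniformity in Tenenbaum's formulation directly; both are valid.
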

\subsection{Lower bounding the integral over the minor arcs}
The two error terms in \eqref{estimateprop1} can be fairly easily estimated and I will insert the details in due course. Regarding the main term we first follow the idea introduced in \cite{HS} to apply the Cauchy--Schwarz inequality to get
\begin{equation}
\label{C-S*}
\int_{\frak m} |{\mathcal F}(\varphi)|^2d\varphi\geq \bigg(\int_{\frak m} |{\mathcal F}(\varphi)\tilde{\mathcal{F}}(\varphi)|d\varphi\bigg)^2\bigg(\int_{\frak m} |\tilde{\mathcal {F}}(\varphi)|^2d\varphi\bigg)^{-1},
\end{equation}
where
$$\tilde{\mathcal{F}}(\varphi)=\sum_{n\leq N}\bigg(\sum_{\substack{r|n\\ r\leq R}}g(r)\bigg)e(n\varphi),$$
for a suitable function $g(r)$. In this way we are reduced to estimate integrals of exponential sums in which the coefficients are an opportune approximation of the function $f(n)$. The heart of the proof in \cite{HS} was in the observation that we can move from integrals of exponential sums with coefficients $f(n)$ to sums where $f(n)$ is twisted by a Ramanujan sum $c_q(n)$. The result is contained in the following proposition, which is a minor modification and a simplified version of \cite[Proposition 3]{HS}, in which a smooth weight in the average of $f$ has been removed by introducing a small error term.
\begin{prop}
\label{newprop10} 
Keep notations as above, and assume that $KQ_0 \le R\le \sqrt{N}$ and $|f(n)| \ll_{\epsilon} N^{\epsilon}$ for any $\epsilon > 0$ and $n\leq N$.  Then 
\begin{equation}
\label{eq81}
\int_{\frak m} |{\mathcal F}(\varphi) \tilde {\mathcal F}(\varphi) | d\varphi \ge \sum_{KQ_0 < q\le R} \Big| \sum_{\substack{r\le R \\ q|r}} \frac{g(r)}{r} \Big| \Big| \sum_{n\le N} 
f(n) c_q(n) \Big|+ O_{\epsilon}(\Delta RN^{\frac 12 +\epsilon}  +N/\log^{B}N), 
\end{equation}
where $\Delta=\max_{r\leq R}|g(r)|$ and $B$ any positive real constant.
\end{prop}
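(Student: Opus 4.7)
The approach is Fourier-analytic, exploiting the Ramanujan-sum decomposition of $\tilde{\mathcal F}$. Starting from the elementary identity $\mathbf{1}_{r\mid n} = (1/r)\sum_{q\mid r} c_q(n)$, after interchanging summations one has
$$\tilde{\mathcal F}(\varphi) = \sum_{q\le R} G(q)\sum_{\substack{b\bmod q\\ (b,q)=1}} T_N(\varphi+b/q),\qquad G(q):=\sum_{\substack{r\le R\\ q\mid r}}\frac{g(r)}{r},$$
where $T_N(\alpha):=\sum_{n\le N} e(n\alpha)$ is the Dirichlet-type kernel, concentrated near $\alpha\equiv 0\pmod 1$. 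Thus $\tilde{\mathcal F}$ is a superposition of localized peaks of magnitude $\asymp |G(q)|\,N$ at every $\varphi\equiv a/q\pmod 1$ with $(a,q)=1$, $q\le R$. The companion identity $\int_0^1\mathcal F(\varphi)\,\overline{T_N(\varphi+\alpha)}\,d\varphi = \mathcal F(-\alpha)$, combined with $\sum_{(a,q)=1}\mathcal F(a/q)=\sum_n f(n)c_q(n)$, is the mechanism converting integrals centred at these peaks into the Ramanujan-twisted sums appearing in the claim.

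I would then construct a unimodular test function tailored to the peaks lying in the minor arcs. For each $q\in(KQ_0,R]$ and each $a$ with $(a,q)=1$, pick a narrow arc $I_{a,q}$ centred at $a/q$ with half-width of order $1/(qR)$; the spacing estimate $\|a/q-a'/q'\|\ge 1/(qq')$ makes these arcs pairwise disjoint, and the condition $q>KQ_0$ places them inside ${\frak m}$. Choose unimodular constants $\sigma_q$ so that $\sigma_q\,\overline{G(q)}\sum_{n\le N}f(n)c_q(n) = |G(q)|\,|\sum_{n\le N}f(n)c_q(n)|$ and define
$$w(\varphi):=\sum_{KQ_0<q\le R}\sigma_q\sum_{(a,q)=1}\chi_{I_{a,q}}(\varphi),\qquad \|w\|_{\infty}\le 1.$$
Since $|\mathcal F\tilde{\mathcal F}|=|\mathcal F\,\overline{\tilde{\mathcal F}}|$, one has the lower bound $\int_{\frak m}|\mathcal F\tilde{\mathcal F}|\,d\varphi \ge \bigl|\int_{\frak m}\mathcal F\,\overline{\tilde{\mathcal F}}\,w\,d\varphi\bigr|$. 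On each $I_{a,q}$ I would isolate the dominant peak contribution of $\overline{\tilde{\mathcal F}}$ coming from the pair $(q',b')=(q,q-a)$, namely $\overline{G(q)}\,\overline{T_N(\varphi-a/q)}$; completing its $I_{a,q}$-integral to $[0,1]$ via the companion identity produces $\overline{G(q)}\mathcal F(a/q)$ up to a tail error. Summation over $(a,q)=1$ turns $\sum_a\mathcal F(a/q)$ into $\sum_n f(n)c_q(n)$, and summation over $q$, combined with the phase choice, yields the main term $\sum_{KQ_0<q\le R}|G(q)|\,|\sum_n f(n)c_q(n)|$.

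The principal obstacle is the error bookkeeping, which has three distinct sources. First, the tail error from completing each $I_{a,q}$-integral to $[0,1]$ must be controlled via the slow decay $|T_N(\beta)|\le 1/(2\|\beta\|)$ combined with the near-polynomial growth $|f(n)|\ll_\epsilon N^\epsilon$. Second, and most delicately, the "off-peak" contributions, that is integrals of $\mathcal F$ against $\overline{T_N(\varphi+b'/q')}$ for pairs $(q',b')\ne (q,q-a)$, require one to exploit the spacing $\|a/q+b'/q'\|\ge 1/(qq')$, which gives $|T_N|\lesssim qq'$ on $I_{a,q}$; a crude summation of these bounds is far too wasteful, so one must use the narrowness of $I_{a,q}$ together with the hypothesis $R\le\sqrt N$ to aggregate the error at the target size $\Delta R N^{1/2+\epsilon}$. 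Third, the $N/\log^B N$ term arises because the present proposition replaces the smooth weight on $f$ used in \cite[Proposition 3]{HS} by a sharp cut-off at $n=N$; this discrepancy is absorbed by a standard smoothing--unsmoothing step using $|f(n)|\ll_\epsilon N^\epsilon$. Once all these estimates are carefully combined, the proposition follows.
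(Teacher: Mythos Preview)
Your outline has the right architecture --- the Ramanujan-sum/Farey-peak decomposition of $\tilde{\mathcal F}$, the unimodular test function supported on minor-arc peaks, and the final unsmoothing --- and these are indeed the ingredients behind \cite[Proposition~3]{HS}. However, there is a genuine gap at the step you yourself flag as ``most delicate'', the off-peak error. You work throughout with the sharp Dirichlet kernel $T_N(\beta)=\sum_{n\le N}e(n\beta)$, whose only decay is $|T_N(\beta)|\ll\|\beta\|^{-1}$. With that, the spacing bound $\|a/q+b'/q'\|\ge 1/(qq')$ gives merely $|T_N|\ll qq'$ on $I_{a,q}$, and even after using the narrowness $|I_{a,q}|\asymp 1/(qR)$, the trivial bound $|G(q')|\ll\Delta N^{\epsilon}/q'$ and $R\le\sqrt N$, a direct summation over all quadruples $(q,a,q',b')$ lands several powers of $N$ above the target $\Delta R N^{1/2+\epsilon}$. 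You assert the error can be ``aggregated'' to the right size but give no mechanism; this is precisely the obstruction that forces \cite{HS} to insert a smooth weight $\Phi(n/N)$ into $\tilde f$, so that the peak kernel has Fourier decay $\ll(1+|\xi|)^{-2}$, and it is the extra power of decay that makes the off-peak sum close.

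The paper does not attempt to redo this analysis with sharp kernels. It quotes the smooth-weighted \cite[Proposition~3]{HS} as a black box (Proposition~\ref{newprop1} here) and carries out only your third step, the unsmoothing, in Lemma~\ref{lemparseval-shiu}. Concretely, one takes $\Phi$ to be a mollified indicator of $[1/T,1-1/T]$, so $\hat\Phi(\xi)\ll T(1+|\xi|)^{-2}$; this costs an extra factor $T$ in the $\Delta R N^{1/2+\epsilon}$ error. The discrepancy $\sum_{n\le N}f(n)c_q(n)\Phi(n/N)-\sum_{n\le N}f(n)c_q(n)$ is supported on $n\le 2N/T$ and on $N(1-2/T)<n\le N$; opening $|c_q(n)|\le\sum_{e\mid(n,q)}e$ and applying Shiu's short-interval bound for divisor-bounded multiplicative functions gives a contribution $\ll d_{\kappa+1}(q)N(\log N)^{\kappa-1}/T$, which after summing over $q$ and choosing $T$ a small power of $N$ balances against $T\Delta RN^{1/2+\epsilon}$. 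Your route would go through if you reinstated the smooth weight for the peak analysis (errors one and two), i.e.\ if you reproduced \cite[Proposition~3]{HS} rather than tried to bypass it; as written, the off-peak step does not close.
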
 
In \cite{HS} Proposition \ref{newprop10} has been applied to deduce the lower bound for the variance of primes in arithmetic progressions as well as for the sequence of divisor functions $d_k(n)$, for a positive integer $k\geq 2$. In the former case, the computations are relatively straightforward because $(n, q) = 1$ for almost all prime (or prime power) values of $n$, so the Ramanujan sum $c_q(n)$ takes the value $\mu(q)$ for almost all such values. In the latter, the argument is more intricate and requires a lot more work. The authors showed that it is possible to find a lower bound for $\sum_{n\le N}d_k(n) c_q(n)$, when suitably restricting the range in which $q$ varies. However, their techniques do not extend to the case of any general divisor function of parameter $\alpha\in\mathbb{C}$. 

Indeed, the main difference between our approach and that of Harper and Soundararajan is in the computation of the mean value of $f(n)$ twisted with a Ramanujan sum. More precisely, Harper and Soundararajan used the classical approach of rewriting the sum in question as an integral of the corresponding Dirichlet series, by means of Perron's formula and then concluding the estimate by using a residue computation. This strategy is admissible since the aforementioned Dirichlet series is a slight variation of $\zeta(s)^k$; since $k$ is a positive integer, it can be extended to a meromorphic function on the whole complex plane with just one pole at $1$. On the other hand, when replacing $k$ with any complex number $\alpha$, the function $\zeta(s)^{\alpha}$ may have an essential singularity at $s=1$, which makes the previous approach inapplicable. More generally, for the class of functions $f(n)$ introduced in Definition \ref{defgendivfun}, the corresponding Dirichlet series can only be defined on the half plane of complex numbers with real part greater or equal then $1$ and there represents a smooth function. One possible way around this is to apply the Selberg--Delange's method (or better, a suitable generalization for multiplicative functions proved by Granville and Koukoulopoulos \cite{GK}) to compute asymptotically the sum $\sum_{n\le N}f(n) c_q(n)$. Clearly, the product $f(n)c_q(n)$ is not a multiplicative function and this is an obstruction to an immediate application of such a result. To overcome this, the idea is to break the above sum down into smaller pieces that are easier to understand and in particular, to reduce ourselves to apply the Selberg--Delange's method to the much more manageable average of $f$ over a coprimality  condition. More precisely, we notice that
\begin{equation}
\label{firstmanipulation*}
\sum_{n\leq N}f(n)c_q(n)=\sum_{\substack{b\leq N\\ p|b\Rightarrow p|q}}f(b)c_q(b)\sum_{\substack{a\leq N/b\\ (a,q)=1}}f(a),
\end{equation}
using the substitution $n=ab$, with $(a,q)=1$ and $b=n/a$, which is unique, and properties of the Ramanujan sums. Thanks to this decomposition we can apply the Selberg--Delange's method to the simple average of $f(a)$ over the coprimality condition $(a,q)=1$. 

Since we are seeking for a lower bound of the integral in \eqref{eq81}, we can restrict the sum over $q$ to a subset of integers between $KQ_0$ and $R$ satisfying certain conditions that will help us to compute a lower bound for \eqref{firstmanipulation*}. In the following we set for future reference all the conditions we ask $q$ to be subject to. Let $\eps$ be a small positive real number to be chosen at the end in terms of $\delta,\kappa,A_1,A_2$ and the implicit constants in \eqref{mainstatistic1}--\eqref{mainstatistic2}. Moreover, let $A,B,C$ and $D$ positive real constants to be chosen in due course. Then we ask that:
\begin{enumerate}
\item $q\in [KQ_0,N^{1/2-3\delta/4}]$ squarefree.
\item $\omega(q)\leq A\log\log N$, where $\omega(.)$ is the prime divisors counting function. Equivalently, we are asking that the number of prime factors of $q$ is bounded by the expected one.
\item $q=tss'$, with 
\begin{enumerate}
\item $p|s\Rightarrow p\leq (\log N)^{B}$, i.e. $s$ is $(\log N)^B$--smooth.
\item $s'\leq N^{\eps}$, with $p|s'\Rightarrow p>(\log N)^B$, i.e. $s'$ smaller than a suitably small power of $N$ and $(\log N)^B$--rough.
\item $s'\in\mathcal{A}'$, where 
\begin{equation*}
\label{mainassumption}
\mathcal{A}':=\bigg\{s': \sum_{p|s'}\frac{\log p}{\min\{|f(p)-1|,1\}}\leq \frac{\eps\log N}{\kappa}\bigg\}.
\end{equation*}
It is equivalent to ask that $f$ is \emph{never} too close to $1$ on several prime factors of $s'$.
\item $t$ a prime in $[N^{1/2-3\delta/4-\eps},N^{1/2-3\delta/4-\eps/2}]$, supposing the existence of a \textit{unique large prime factor} in the prime factorization of $q$.
\end{enumerate}
\item for any prime $p|q$ we have $p>C$, i.e. $q$ does not have any very small prime factor.
\item
\begin{enumerate}
\item $|f(p)-1|>1/\sqrt{\log\log N}$, if $p|ss'$, i.e. on those primes $f$ is never too close to $1$.
\item $p>C/|f(p)-1|$, for any $p|ss'$.
\item $f(t)\neq 1$.
\end{enumerate}
\item To avoid the scenario in which $q$ has lots of small prime factors, we require $q\in\mathcal{A},$ where 
$$\mathcal{A}:=\bigg\{q: \sum_{p|q}\frac{(\log p)^{A_1+1}}{p^{3/4}}\leq D\bigg\}.$$
\end{enumerate}
Under the restrictions $(1)$, $(4)$ and $(6)$ on $q$, we can develop the average of $f$ under the coprimality condition $(a,q)=1$ as 
\begin{equation}
\label{SDasymptotic*}
\sum_{\substack{a\leq N/b\\ (a,q)=1}}f(a)=\frac{N}{b}(\log(N/b))^{\alpha-1}\left(\sum_{j=0}^{J}\frac{\lambda_j}{(\log(N/b))^{j}}+O((\log(N/b))^{\kappa-A_1-1}(\log\log N))\right),
\end{equation}
for any $b\leq N$, where $J=\lfloor A_1\rfloor$ and $\lambda_j=\lambda_j(f,\alpha,q)$ are certain coefficients with a controlled growth on average over $q$. Here we see the need to restrict ourselves to values of $\alpha\not\in-\mathbb{N}\cup \{0\}$. Indeed, each $\lambda_j$ turns out to be a multiple of the reciprocal of $\Gamma(\alpha-j)$ and $-\mathbb{N}\cup \{0\}$ is exactly the set of poles of the Gamma function. If $\alpha$ belongs to it, all the terms in the sum over $j$ vanish and we no longer have an asymptotic expansion for the average of $f$. Losing control on \eqref{SDasymptotic*} does not allow us to find an explicit lower bound for the variance \eqref{variancedef}, with the method developed here. 

Plugging \eqref{SDasymptotic*} into \eqref{firstmanipulation*}, we are basically left to evaluate the truncated Dirichlet series 
$$\sum_{\substack{b\leq N:\\ p|b\Rightarrow p|q}}\frac{f(b)c_q(b)}{b}(\log(N/b))^{\tilde{\alpha}},$$
with $\tilde{\alpha}\in\{\alpha-1,\alpha-2,\dots,\alpha-J-1\}$. Heuristically, we might expect they behave like
\begin{equation}
\label{heuristicform}
(\log N)^{\tilde{\alpha}}\sum_{b|q}f(b)\mu(q/b).
\end{equation}
Indeed, the above sums have their major contribution coming from the \textit{squarefree} $b\leq N$ sharing their prime factors only with $q$. On those integers they reduce to a sum over the divisors of $q$. Inserting now \eqref{mainpropc_q}, swapping summations and assuming that $(\log(N/b))^{\tilde{\alpha}}$ may be replaced by $(\log N)^{\tilde{\alpha}}$ on average over $b\leq N$, we arrive for them to an expression like
$$(\log N)^{\tilde{\alpha}}\sum_{b|q}f(b)\mu(q/b)\sum_{k|\frac{q}{b}}\frac{f(k)}{k}.$$
Here the innermost sum is
$$\sum_{k|\frac{q}{b}}\frac{f(k)}{k}=\prod_{p|\frac{q}{b}}\bigg(1+\frac{f(p)}{p}\bigg),$$
which thanks to the uniform boundedness of $f$ on primes does not affect our computation on average over $q$, thus leaving us essentially with understanding the behaviour of \eqref{heuristicform}. However, this is a very rough prediction based on arithmetic properties of the Ramanujan sums and of our generalized divisor functions. Technically speaking, there are several details to take into account which make the evaluation of those sums quite complicated, as for instance the presence of possibly very large divisors of $q$, for which the value of $(\log(N/b))^{\tilde{\alpha}}$ cannot be approximated with $(\log N)^{\tilde{\alpha}}$. Thus, the idea is to exploit the structure of the Ramanujan sums, leading to a useful decomposition of these truncated series given by splitting the integers $q=rs\leq N^{1/2}$, with $s$ supported only on small prime numbers as in condition $(3.a)$, whereas $r$ will be written as $r=ts'$ later on, with $t$ and $s'$ subject to conditions $(3.b)-(3.d)$. In view of this factorization and using the Dirichlet hyperbola method we have the following identity:
\begin{align}
\label{multofc_q*}
\sum_{\substack{b\leq N\\ p|b\Rightarrow p|q}}\frac{f(b)c_q(b)}{b}(\log(N/b))^{\tilde{\alpha}}&=\sum_{\substack{b_1\leq \sqrt{N}\\ p|b_1\Rightarrow p|r}}\frac{f(b_1)c_r(b_1)}{b_1}\sum_{\substack{b_2\leq N/b_1\\ p|b_2\Rightarrow p|s}}\frac{f(b_2)c_s(b_2)}{b_2}(\log(N/b_1 b_2))^{\tilde{\alpha}}\\
&+\sum_{\substack{b_2\leq \sqrt{N}\\ p|b_2\Rightarrow p|s}}\frac{f(b_2)c_s(b_2)}{b_2}\sum_{\substack{\sqrt{N}<b_1\leq N/b_2\\ p|b_1\Rightarrow p|r}}\frac{f(b_1)c_r(b_1)}{b_1}(\log(N/b_1 b_2))^{\tilde{\alpha}},\nonumber
\end{align}
since by multiplicativity of $c_q(n)$ as function of $q$ and definition of $r,s$ we have 
$$c_q(b)=c_r(b)c_s(b)=c_r(b_1)c_s(b_2).$$ 
We will show that the contribution from the second double sum on the right hand side of \eqref{multofc_q*} is negligible, because the innermost sum there is a tail of a convergent series. We are left then with finding an estimate for the first one. Regarding the innermost sum there, this can be done by expanding the $\tilde{\alpha}$-power of the logarithm $\log(N/b_1b_2)$, using the generalized binomial theorem. In this way we obtain a sum of successive derivatives of the Dirichlet series in question, which can be handled by means of several applications of the Fa\`a di Bruno's formula, which is a combinatorial expression for the derivative of the composition of two functions (see for instance Roman's paper \cite{R}). It remains to estimate the outermost sum twisted again with a fractional power of $\log(N/b_1)$. In order to compute this we insert a key hypothesis on the structure of $q$, i.e. to be divisible by an extremely large prime number of roughly the size of $\sqrt{N}$, as in condition $(3.d).$ Indeed, it seems crucial to avoid the situation in which $r$ has several large divisors, thus gaining more control on the factor $\log(N/b_1)$.  This is another main difference with the approach employed in \cite{HS}, where the restriction on $q$ consisted only on taking $N^{\eps}$--smooth numbers, for a carefully chosen small $\eps>0$. Under our assumption, the aforementioned sums can be handled by using the multinomial coefficient formula, which gives the expansion for a positive integer power of a multinomial sum (see for example Netto \cite{N}).

We will end up with
$$|\sum_{n\leq N}f(n)c_q(n)|\gg |c_0|\frac{N(\log N)^{\Re(\alpha)-1}}{|\Gamma(\alpha)|}|(f\ast \mu)(q)|,$$
where $c_0$ is as in the statement of Theorem \ref{thmalpha}. Here we indicate with $f\ast g$ the Dirichlet convolution between any two functions $f$ and $g$. Inserting this final lower bound in \eqref{eq81}, plugging this in Proposition \ref{Prop5.1} and estimating the remaining minor quantities, we deduce that
\begin{equation}
\label{finalestimlowerboundvariance11}
V(Q,f) \gg |c_0|\frac{QN(\log N)^{-\beta+2(\Re(\alpha)-1)}}{|\Gamma(\alpha)|^2}\left(\sum_{q\leq N}'\frac{|f\ast \mu(q)|^2}{q}\right)^2
\end{equation}
where the sum $\sum^{'}$ is over all the integers $q$ satisfying the restrictions $(1)-(6)$. The proof now ends after showing that 
$$\sum_{q\leq N}'\frac{|f\ast \mu(q)|^2}{q}\gg \beta(\log N)^{\beta}\ \textrm{and}\ \sum_{n\leq N}|f(n)|^2\ll N(\log N)^{\beta+2(\Re(\alpha)-1)},$$
so that to deduce
$$V(Q,f) \gg \bigg|\frac{c_0\beta}{\Gamma(\alpha)}\bigg|^2QN(\log N)^{\beta+2(\Re(\alpha)-1)}\gg \bigg|\frac{c_0\beta}{\Gamma(\alpha)}\bigg|^2 Q \sum_{n\leq N}|f(n)|^2.$$
\section{Proof of corollaries of Theorem \ref{thmalpha}}
This section is devoted to the proof of some applications of our main theorem that have already been mentioned in the introduction.
\begin{proof}[Proof of Theorem \ref{cor1}]
The case $\alpha=0$ has already been handled by Proposition \ref{thm0,1}. When instead $\alpha\not\in -\mathbb{N}\cup \{0,1\}$, Theorem \ref{thmalpha} can be applied.

Notice than that $c_0=1$ as we can see from the following identities:
$$\sum_{k\geq 0}\frac{d_{\alpha}(p^k)}{p^k}=\bigg(1-\frac{1}{p}\bigg)^{-\alpha},$$
for any prime $p$ and any complex number $\alpha$. Thus, assumption \eqref{gammaassumpt} is satisfied.

Moreover, since $\alpha\not\in -\mathbb{N}\cup \{0,1\}$ and $\beta=|\alpha-1|^2>0$ are constant and equations \eqref{mainstatistic1}--\eqref{mainstatistic2} are satisfied with any $A_1,A_2>0$, also the relations \eqref{relationbetaA_1} hold. 

Theorem \ref{thmalpha} now gives the thesis.
\end{proof}
\begin{proof}[Proof of Corollary \ref{sumoftwosquares}]
The function $\textbf{1}_{S}$ is multiplicative and satisfies \eqref{mainstatistic1} and \eqref{mainstatistic2} with $\alpha=\beta=1/2$ and any $A_1,A_2>0$, by the prime number theorem for the arithmetic progression $1\ (\bmod\ {4})$ (see e.g. \cite[Corollary 11.20]{MV}). Thus, the inequalities \eqref{relationbetaA_1} hold. 

By Mertens' theorem for the arithmetic progressions $1\ (\bmod\ {4})$ and $3\ (\bmod\ {4})$ (see e.g. \cite[Corollary 4.12]{MV}) we have $c_0\gg 1$, thus implying  assumption \eqref{gammaassumpt}.

We again conclude by using \eqref{variance1}.
\end{proof}
\begin{proof}[Proof of Theorem \ref{uniformalpha}]
We let $f(n)=d_{\alpha_N}(n)$, with $\alpha_N=1+1/R(N)$ as in the statement. By choosing $C$ large enough, we may assume $1/2\leq \alpha_N^2\leq 3/2$. Using \cite[Theorem 1]{GK} with $A_1=4$ we see that
\begin{equation}
\label{firstasymptoticforf(n)^2}
\sum_{n\leq N}f^2(n)=\sum_{j=0}^{3}\frac{c_j(\alpha_N ^2)}{\Gamma(\alpha_N ^2-j)}N(\log N)^{\alpha_N ^2-j-1}+O\bigg(\frac{N\log\log N}{\log N}\bigg),
\end{equation} 
with coefficients $c_j(\alpha_N^2)$ defined by
$$c_j(\alpha_N^2)=\frac{d^j}{dz^j}\frac{(z-1)^{\alpha_N ^2}F(z)}{z}\bigg|_{z=1},$$
where $F(z)$ is the Dirichlet series of $f^2(n)$. By adapting the proof of the $C^4$-continuation of $F(z)(z-1)^{\alpha_N^2}$ to the half-plane $\Re(z)\geq 1$ at the start of \cite[section 2]{GK}, we can easily check that every $c_j(\alpha_N^2)$ is uniformly bounded, for every $1/2\leq \alpha_N^2\leq 3/2$.

Moreover, for any $j\geq 1$
$$\Gamma(\alpha_N^2-j)=\frac{\Gamma(\alpha_N^2)}{(\alpha_N^2-j)(\alpha_N^2-j+1)\cdots(\alpha_N^2-1)},$$
from which we deduce that
$$ |\Gamma(\alpha_N^2)|\asymp1\ \textrm{and}\  |\Gamma(\alpha_N^2-j)|\gg 1$$
thanks to the continuity of $\Gamma(z)$ and our hypothesis on $\alpha_N$. Hence, we conclude that
\begin{equation}
\label{asymptoticforf(n)^2}
\sum_{n\leq N}f^2(n)\gg N(\log N)^{\alpha_N^2-1}= N\exp\bigg(\bigg(2+\frac{1}{R(N)}\bigg)\frac{\log\log N}{R(N)}\bigg),
\end{equation}
if $N$ large enough, where we also used that 
$$c_0(\alpha_N^2)=\prod_{p}\bigg(1+\frac{f(p)^2}{p}+\frac{f(p^2)^2}{p^2}+\cdots\bigg)\bigg(1-\frac{1}{p}\bigg)^{\alpha_N ^2}\gg 1.$$
Similarly, we have
$$c_0=\prod_{p\leq N}\bigg(1+\frac{f(p)}{p}+\frac{f(p^2)}{p^2}+\cdots\bigg)\bigg(1-\frac{1}{p}\bigg)^{\alpha_N}\gg 1.$$
We notice that relations \eqref{relationbetaA_1} are trivially satisfied, with $\beta=1/R(N)^2$, since $R(N)$ is allowed to grow at most as a large power of $\log N$ and we can take $A_1,A_2$ arbitrarily large. 

An application of Theorem \ref{thmalpha}, together with equation \eqref{asymptoticforf(n)^2}, leads to \eqref{varianceuniform}, since again by the continuity of the Gamma function we have $|\Gamma(\alpha_N)|\ll 1$.
\end{proof} 
\section{Mean value of multiplicative functions under a coprimality condition}
In this section we will show how to handle averages of multiplicative functions satisfying \eqref{mainstatistic1} under a coprimality condition. Since we are going to use the full strength of \cite[Theorem 1]{GK}, we report it here for the sake of readiness and in a form more suitable for our purposes.
\begin{thm}
\label{GKthm}
Let $f$ be a multiplicative function satisfying \eqref{mainstatistic1} and such that there exists $\kappa>1$ with $|f(n)|\leq d_\kappa(n)$, for any $N$-smooth positive integer $n$. Let $J$ be the largest integer $<A_1$ and the coefficients $c_j=c_j(f,\alpha)$ defined by
$$c_j=\frac{1}{j!}\frac{d^j}{dz^j}\bigg(\zeta_N(z)^{-\alpha}F(z)\frac{((z-1)\zeta(z))^{\alpha}}{z}\bigg)_{z=1},\ \textrm{for\ any}\ j\leq J,$$
with
$$F(z)=\sum_{\substack{n:\\ p|n\Rightarrow p\leq N}}\frac{f(n)}{n^z},\ \ \ \zeta_N(z)=\sum_{\substack{n:\\ p|n\Rightarrow p\leq N}}\frac{1}{n^z}.$$
Then we have
\begin{equation}
\label{maineqGKthm}
\sum_{n\leq x}f(n)=x\sum_{j=0}^{J}c_j\frac{(\log x)^{\alpha-j-1}}{\Gamma(\alpha-j)}+O(x(\log x)^{\kappa-1-A_1}(\log\log x)),\ \ \ (2\leq x\leq N).
\end{equation} 
The big-Oh constant depends at most on $\kappa, A_1$ and the implicit constant in \eqref{mainstatistic1}. The dependence on $A_1$ comes from both its size and its distance from the nearest integer. Moreover, the condition $|f(n)|\leq d_{\kappa}(n)$ can be relaxed to the following two ones on average over prime powers
\begin{align}
\label{extraconditions}
&\sum_{\substack{p\leq x}}\frac{|f(p)|\log p}{p}\leq \kappa \log x+O(1)\ \ \ (2\leq x\leq N);\\
&\sum_{\substack{p\leq x\\ j\geq 1}}\frac{|f(p^j)|^2}{p^j}\leq \kappa^2\log\log x+O(1)\ \ \ (2\leq x\leq N),\nonumber
\end{align}
where the big-Oh terms here depend only on $\kappa$.
\end{thm}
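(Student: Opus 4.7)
The plan is to run the Selberg--Delange method in the form adapted to non-analytic generating series. The first step is to write, for $\Re(z)>1$,
$$F(z)=\zeta_N(z)^{\alpha}H(z),\qquad H(z):=F(z)\zeta_N(z)^{-\alpha},$$
and then to promote the auxiliary function $G(z):=H(z)((z-1)\zeta(z))^{\alpha}/z$ to a $C^{J+1}$ function on the closed half-plane $\Re(z)\geq 1$. This is the single point at which the hypotheses enter substantively: taking logarithmic derivatives, the prime moment $\sum_{p\leq x}f(p)\log p=\alpha x+O(x/(\log x)^{A_1})$ from \eqref{mainstatistic1} transfers, via partial summation and Mertens-type estimates, into control on $(\log F)'$ along vertical lines; the upper bounds \eqref{extraconditions} on $\sum_p |f(p)|\log p/p$ and on the prime-power tail $\sum_{p,j\geq 1}|f(p^j)|^2/p^j$ then control the Euler product tails and ensure that $G$ and its first $J$ derivatives are bounded on $\Re(z)=1$.

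Once $G$ has the required smoothness, one would invoke a smoothed Perron formula of the shape
$$\sum_{n\leq x}f(n)\approx \frac{1}{2\pi i}\int_{\mathcal{C}}F(z)\frac{x^{z}}{z}\,dz,$$
starting on a vertical line $\Re(z)=1+1/\log x$ and deforming $\mathcal{C}$ to a truncated Hankel contour that loops around the branch singularity of $\zeta_N(z)^{\alpha}$ at $z=1$. Expanding $G(z)$ in its Taylor polynomial of degree $J$ at $z=1$ yields, term by term, integrals of the form
$$\frac{1}{2\pi i}\int_{\text{Hankel}}(z-1)^{j-\alpha}x^{z}\,dz=\frac{x(\log x)^{\alpha-j-1}}{\Gamma(\alpha-j)},$$
which is the classical Hankel representation of $1/\Gamma$. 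The coefficient that comes out paired with the $j$-th such integral is exactly the Taylor coefficient of $G$ at $z=1$, i.e.\ $c_{j}$, matching the definition in the statement.

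The error term $O(x(\log x)^{\kappa-1-A_1}\log\log x)$ is then assembled from three pieces: the remainder of the degree-$J$ Taylor expansion of $G$, which contributes the $(\log x)^{-A_1}$ gain relative to the natural size $x(\log x)^{\Re(\alpha)-1}$; the contribution of the horizontal and far-vertical parts of the deformed contour, controlled by a convexity-type estimate $|F(z)|\ll|\zeta_N(z)|^{\kappa}$ on $\Re(z)=1$; and the cost of the Perron truncation, which yields the harmless $\log\log x$ factor.

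The main obstacle is establishing the $C^{J+1}$ regularity of $G$ on the closed half-plane $\Re(z)\geq 1$ without any analytic continuation of $F$ past that line — for a general multiplicative $f$ (and a general complex $\alpha$) no such continuation exists. One must therefore work with the boundary values of $G$ directly, using \eqref{mainstatistic1} iteratively to extract one derivative at a time and tracking how the constants deteriorate as $A_1$ approaches the next integer (this is the source of the dependence of the implicit constant on the distance from $A_1$ to $\mathbb{Z}$, and of the choice $J=\lfloor A_1\rfloor$ rather than $J=A_1$). A secondary technical point is keeping the truncation to $N$-smooth integers consistent throughout: since the Euler product is taken only over $p\leq N$, all estimates have to be uniform in this truncation, which is exactly what makes the result applicable in the range $2\leq x\leq N$ appearing in \eqref{maineqGKthm}.
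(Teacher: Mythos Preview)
Your proposal sketches the Selberg--Delange/Granville--Koukoulopoulos machinery from scratch, but the paper's own proof of this theorem is almost entirely a citation: it observes that after extending $f$ to primes $p>N$ by $f(p^{k})=d_{\alpha}(p^{k})$, the function satisfies the hypotheses of \cite[Theorem~1]{GK} verbatim, and then applies that result directly. The only substantive work in the paper's proof is (i) checking that the coefficients $c_{j}$ as defined in \cite{GK}, namely $\frac{1}{j!}\frac{d^{j}}{dz^{j}}\big((z-1)^{\alpha}\tilde F(z)/z\big)|_{z=1}$ with $\tilde F$ the full Dirichlet series, can be rewritten in the $N$-truncated form stated here via the identity $\zeta(z)^{-\alpha}\tilde F(z)=\zeta_{N}(z)^{-\alpha}F(z)$; and (ii) proving, in a separate lemma (Lemma~\ref{lemderivativeseulerproduct0}), that the $c_{j}$ are uniformly bounded in $N$ and $\alpha$, using Fa\`a di Bruno on the logarithmic derivative of the Euler product. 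No Perron formula, no contour deformation, no Hankel integral appears in the paper's argument --- all of that is delegated to \cite{GK}.

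So your approach is not wrong in spirit (you are outlining what \cite{GK} does), but it is doing far more than the paper does, and it remains a sketch: the $C^{J+1}$-regularity step you flag as ``the main obstacle'' is exactly the heart of \cite{GK} and would need the full argument there to be filled in. If your intention is to match the paper, the correct move is simply to invoke \cite[Theorem~1]{GK}, note the extension of $f$ past $N$, and rewrite the coefficients as above.
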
 
\begin{proof}
We first note that \cite[Theorem 1]{GK} gives an asymptotic for the mean value of multiplicative functions for which we know their behaviour on average over all the prime numbers, including those much larger than $N$, whereas here we are interested only in the value of $f(p^k)$, for prime powers $p^k$ with $p\leq N$. However, we can freely replace $f$ with the function equal to $f$ itself on such prime powers and such that
$$f(p^k)=d_\alpha(p^k),\ \textrm{for\ any}\ p>N\ \textrm{and}\ k\geq 1.$$
Then Theorem \ref{GKthm} readily follows from \cite[Theorem 1]{GK}. 
Indeed, it is clear that the statistic \cite[equation 1.2]{GK} corresponds to \eqref{mainstatistic1} here. Moreover, the condition $|f(n)|\leq d_{\kappa}(n)$, for every $n\leq N$, trivially translates to our condition only on $N$-smooth numbers, since it is equivalent to the corresponding one on prime powers. Same considerations for the statistics \eqref{extraconditions}, which are slightly weaker than the corresponding conditions \cite[eq. (7.1)--(7.2)]{GK}.

The only main difference is in the representation of the coefficients $c_j$. Indeed, in \cite{GK} such coefficients are defined as
$$c_j=\frac{1}{j!}\frac{d^j}{dz^j}\frac{(z-1)^{\alpha}\tilde{F}(z)}{z}\bigg|_{z=1},$$
with $\tilde{F}(z)$ the Dirichlet series of $f(n)$. Here we multiply and divide the above expression by $\zeta(z)^{-\alpha}$ and notice that $$\zeta(z)^{-\alpha}\tilde{F}(z)=\zeta_N(z)^{-\alpha}F(z),$$
where $\zeta_N(z)$ and $F(z)$ are defined as in the statement of the theorem. Since the function $((z-1)\zeta(z))^{\alpha}$ is a holomorphic function on $\Re(z)\geq 1$, for any $\alpha\in\mathbb{C}$, we see that each coefficient $c_j$ is basically the $j$-th derivative of an Euler product. In particular, we have 
\begin{equation*}
c_0=\prod_{p\leq N}\bigg(1+\frac{f(p)}{p}+\frac{f(p^2)}{p^2}+\cdots\bigg)\bigg(1-\frac{1}{p}\bigg)^{\alpha}.
\end{equation*}
Potentially, the coefficients $c_j$ could grow together with $N$ and $\alpha$. However, the next lemma shows that under our hypotheses on $f$ they are indeed uniformly bounded. 
\begin{lem}
\label{lemderivativeseulerproduct0}
Let $f$ be a multiplicative function satisfying \eqref{mainstatistic1} for some $\alpha\in\mathbb{C}$ and such that $|f(n)|\leq d_{\kappa}(n)$, for some $\kappa>1$ and every $N$-smooth number $n$. Then
$$c_j\ll 1,\ \textrm{for\ any}\ 0\leq j\leq J,$$
where the implicit constant may depend on $\kappa,A_1$ and the implicit constant in \eqref{mainstatistic1} and we take $N$ large enough with respect to these parameters.
\end{lem}
\begin{proof}
It is clear that $c_0$ is uniformly bounded in $N$ and $\alpha$. Indeed, since by hypothesis $f(p^k)\leq d_{\kappa}(p^k)$, for any prime $p\leq N$ and integer $k\geq 0$, either $c_0=0$ or we can write
$$c_0=\exp\bigg(\sum_{p\leq N}\bigg(\frac{f(p)-\alpha}{p}+O_\kappa\bigg(\frac{1}{p^2}\bigg)\bigg)\bigg)\asymp 1,$$
by partial summation from \eqref{mainstatistic1}, where the implicit constant may depend on $\kappa,A_1$ and the implicit constant in \eqref{mainstatistic1} and we take $N$ large enough with respect to these parameters.

It is not that straightforward though to show that each $c_j$, for $j\geq 1$, is uniformly bounded in $N$ and $\alpha$.
To this aim we employ the following procedure borrowing some ideas from the discussion in \cite[Section 2]{GK}. Since $c_j$ is the $j$-th derivative at $z=1$ of the product between $H(z)=\zeta_N(z)^{-\alpha}F(z)$ and $Z_\alpha(z)=((z-1)\zeta(z))^{\alpha}/z$, we only need to show that all the $l$-derivatives of $H(z)$ at $z=1$ are uniformly bounded, for any $l\leq J$. Indeed, this is certainly true for all the $m$-derivatives of $Z_\alpha(z)$ at $z=1$, for any $m\leq J$, and we have
$$(H(z)Z_\alpha(z))^{(j)}(1)=\sum_{l+m=j}\binom{j}{l}H^{(l)}(1)Z_\alpha^{(m)}(1).$$
We have $F(z)=F_1(z)F_2(z)$, where
$$F_1(z)=\sum_{\substack{n\geq 1: \\ p|n\Rightarrow p\leq N}}\frac{d_f(n)}{n^z}=\prod_{p\leq N}\bigg(1-\frac{1}{p^z}\bigg)^{-f(p)},$$
where $d_f(n)$ is the multiplicative function satisfying 
$$d_f(p^k)=\binom{f(p)+k-1}{k}$$
over all the prime powers $p^k$, with $p\leq N$, and 
$$F_2(z)=\sum_{\substack{n\geq 1: \\ p|n\Rightarrow p\leq N}}\frac{R_f(n)}{n^z},$$
with $f(n)=d_f\ast R_f(n).$ Since $R_f$ is supported only on square-full integers, $|R_f(n)|\leq d_{2\kappa}(n)$ and for every $l\geq 0$ we have
$$F_2^{(l)}(1)=\sum_{\substack{n\geq 1: \\ p|n\Rightarrow p\leq N}}\frac{R_f(n)(-\log n)^l}{n},$$
it is clear that all the derivatives of $F_2$ at $z=1$ are uniformly bounded. 

Arguing similarly as before, we are left with showing that all the derivatives of 
$$H_1(z)=\zeta_N(z)^{-\alpha}F_1(z)=\prod_{p\leq N}\bigg(1-\frac{1}{p^z}\bigg)^{-f(p)+\alpha}$$
at $z=1$ are uniformly bounded.

To this aim, for any $1\leq l\leq J$ we use the Fa\`a di Bruno's formula \cite[p. 807, Theorem 2]{R} to find
\begin{equation}
\label{firstfaadibruno}
H_1^{(l)}(1)=H_1(1)l!\sum_{m_1+2m_2+...+lm_l=l}\frac{\prod_{i=1}^{l}(h^{(i-1)}(1))^{m_i}}{1!^{m_1}m_1!2!^{m_2}m_2!\cdots l!^{m_l}m_l!},
\end{equation}
where 
\begin{align*}
h(z)=\frac{H_1'}{H_1}(z)&=\sum_{p\leq N}(\alpha-f(p))\log p\sum_{k=0}^{\infty}\frac{d_{\alpha-f}(p^k)}{p^{(k+1)z}},
\end{align*}
where as before $d_{\alpha-f}(n)$ is the multiplicative function satisfying 
$$d_{\alpha-f}(p^k)=\binom{\alpha-f(p)+k-1}{k}$$
over all the prime powers $p^k$, with $p\leq N$. From this we deduce that
\begin{align*}
h^{(i-1)}(1)&=\sum_{p\leq N}(\alpha-f(p))(\log p)^{i}\sum_{k=0}^{\infty}\frac{d_{\alpha-f}(p^k)(-k-1)^{i-1}}{p^{(k+1)}}\\
&=(-1)^{i-1}\sum_{p\leq N}\frac{(\alpha-f(p))(\log p)^{i}}{p}+\sum_{p\leq N}(\alpha-f(p))(\log p)^{i}\sum_{k=1}^{\infty}\frac{d_{\alpha-f}(p^k)(-k-1)^{i-1}}{p^{(k+1)}}\\
&=(-1)^{i-1}\sum_{p\leq N}\frac{(\alpha-f(p))(\log p)^{i}}{p}+O_{i,\kappa}\bigg(\sum_{p\leq N}\frac{(\log p)^{i}}{p^2}\bigg)\\
&=(-1)^{i-1}\sum_{p\leq N}\frac{(\alpha-f(p))(\log p)^{i}}{p}+O_{i,\kappa}(1),
\end{align*}
where we used that $|d_{\alpha-f}(n)|\leq d_{2\kappa}(n)$. The last sum above can be estimated with a partial summation argument from \eqref{mainstatistic1}, for any $1\leq i\leq J$. We thus find $|h^{(i-1)}(1)|\ll 1$, with an implicit constant depending on $\kappa,A_1,i$ and the implicit constant in \eqref{mainstatistic1}. Inserting this into \eqref{firstfaadibruno} also gives $H^{(l)}(1)\ll 1$, with now an implicit constant depending on $\kappa,A_1,l$ and the implicit constant in \eqref{mainstatistic1}, since we can prove that $H(1)$ is uniformly bounded in much the same way as we did for $c_0$. Together with previous considerations this concludes the proof of the lemma.
\end{proof}
The previous lemma shows that the coefficients in the asymptotic expansion \eqref{maineqGKthm} are well defined and indeed uniformly bounded independently of $N$ and $\alpha$, for given $A_1$. This will also turn out to be useful in several future applications of Theorem \ref{GKthm}, in which in order to make sure that the first term in the asymptotic expansion \eqref{maineqGKthm} dominates, we will need a careful control on the other terms. 

Together with previous observations, it proves this version of \cite[Theorem 1]{GK}.
\end{proof}
We are going to apply the above theorem to prove its slight variation about sums restricted to those integers up to $x$ coprime with a parameter $q$, satisfying certain suitable properties.
\begin{thm}
\label{GKvariant}
Let $f(n)$ be a multiplicative function with complex values such that there exists $\kappa>1$ with $|f(n)|\leq d_{\kappa}(n)$, for any $N$-smooth positive integer $n$, and satisfying \eqref{mainstatistic1} with $\alpha\in\mathbb{C}\setminus\{\{0\}\cup -\mathbb{N}\}$. Moreover, suppose that $q$ is a positive squarefree number smaller than $N$ satisfying condition $(4)$, i.e. for any prime $p|q$ we have $p>C$, where $C>\kappa^2$ will be chosen later on in terms of $\delta,\kappa,A_1$ and the implicit constant in \eqref{mainstatistic1}.
Then for any $4\leq x\leq N$ we have
\begin{align}
\label{eqGKvariant}
\sum_{\substack{n\leq x\\ (n,q)=1}}f(n)&=x(\log x)^{\alpha-1}\sum_{j=0}^{J}\frac{\lambda_j}{(\log x)^{j}}+O(|\tilde{G}_q^{(2\lfloor A_1\rfloor+2)}(1)|x(\log x)^{\kappa-A_1-1}(\log\log x))\\
&+O\bigg(x^{3/4}\sum_{d|q}\frac{d_{\kappa}(d)}{d^{3/4}}\bigg),\nonumber
\end{align}
where $J$ is the largest integer $<A_1$, $\lfloor A_1\rfloor$ is the integer part of $A_1$ and we define 
$$\lambda_j=\lambda_j(f,\alpha,q)=\frac{1}{\Gamma(\alpha-j)}\sum_{l+h=j}\frac{(H_q^{-1})^{(h)}(1)c_l}{h!},$$
with
$$H_q(z)=\prod_{p|q}\bigg(1+\frac{f(p)}{p^z}+\frac{f(p^2)}{p^{2z}}+\cdots\bigg)\ \textrm{and}\ \tilde{G}_q(z)=\prod_{p|q}\bigg(1+\frac{|f(p)|}{p^z}\bigg)$$
on $\Re(z)\geq 1$. Here the big-Oh constant depends on $\kappa,A_1$ and the implicit constant in \eqref{mainstatistic1}.
\end{thm}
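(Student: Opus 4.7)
The plan is to reduce the coprimality constraint to an unrestricted mean value by exploiting the Dirichlet series factorization $F_q(z) = F(z)/H_q(z)$, where $F_q(z):=\sum_{(n,q)=1} f(n)\,n^{-z}$. Writing $1/H_q(z)=\sum_{b\mid q^\infty} g(b)/b^z$, with $g$ supported on integers all of whose prime factors divide $q$, multiplicativity gives the identity
$$\sum_{\substack{n\le x\\ (n,q)=1}} f(n) \;=\; \sum_{\substack{b\le x\\ b\mid q^\infty}} g(b)\,S(x/b), \qquad S(y):=\sum_{n\le y} f(n).$$
Condition $(4)$---that every $p\mid q$ exceeds $C>\kappa^2$---is the natural hypothesis making each Euler factor $1/H_p(z)$ a small perturbation of $1$ on $\Re z\ge 1$, giving bounds of the shape $|g(p^k)|\ll d_\kappa(p^k)$ and the absolute convergence of $\sum_{b\mid q^\infty}|g(b)|/b^{3/4}$, which moreover factors as the Euler product $\prod_{p\mid q}\bigl(1+O_\kappa(1/p^{3/4})\bigr)$. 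I would then split the sum at the threshold $b=x^{1/4}$.

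For $b\le x^{1/4}$, I would apply Theorem \ref{GKthm} to $S(x/b)$, producing the polynomial in $\log(x/b)$ of degree $J$ plus an error of size $(x/b)(\log(x/b))^{\kappa-1-A_1}\log\log(x/b)$. Writing $(\log(x/b))^{\alpha-l-1}=(\log x)^{\alpha-l-1}(1-\log b/\log x)^{\alpha-l-1}$ and expanding by the generalized binomial theorem, after interchanging the sum over $b$ with the binomial sum and truncating the latter at $l+h\le J$, each surviving inner sum identifies as
$$\sum_{b\mid q^\infty}\frac{g(b)(\log b)^h}{b} \;=\; (-1)^h\bigl(H_q^{-1}\bigr)^{(h)}(1).$$
Setting $j=l+h$ and invoking the Pochhammer identity $\frac{1}{\Gamma(\alpha-l)}\binom{\alpha-l-1}{h}=\frac{1}{h!\,\Gamma(\alpha-j)}$, the reassembled main term is exactly $x(\log x)^{\alpha-1}\sum_{j\le J}\lambda_j(\log x)^{-j}$ with $\lambda_j$ of the stated form.

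The two discarded portions produce the two error terms. The binomial tail $l+h>J$ is controlled via the majorization $|(H_q^{-1})^{(h)}(1)|\le \tilde G_q^{(h)}(1)$, obtained by replacing each $f(p^k)$ by $|f(p^k)|$ in the Dirichlet series; the compounding of the binomial index $h$ with the Selberg--Delange index $l$ forces derivatives up to order $\sim 2J$, explaining the exponent $2\lfloor A_1\rfloor+2$ on $\tilde G_q$. For the range $b>x^{1/4}$, I would combine the crude bound $|S(x/b)|\ll (x/b)(\log x)^{\kappa-1}$ with the Rankin trick
$$\sum_{\substack{b>x^{1/4}\\ b\mid q^\infty}} \frac{|g(b)|}{b} \;\ll\; x^{-1/16}\prod_{p\mid q}\Bigl(1+\sum_{k\ge 1}\frac{|g(p^k)|}{p^{3k/4}}\Bigr),$$
so that, after absorbing the $(\log x)^{\kappa-1}$ factor into a slightly smaller Rankin exponent, the total contribution is $\ll x^{3/4}$ times the Euler product, which by squarefreeness of $q$ and the bounds on $|g(p^k)|$ collapses to a constant multiple of $\sum_{d\mid q} d_\kappa(d)/d^{3/4}$.

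The main obstacle is the combinatorial bookkeeping in the binomial expansion: one must show that the rearranged double sum produces \emph{exactly} the coefficients $\lambda_j$ in the stated form (hence the Pochhammer identity above), and that the truncated tail collapses uniformly in $q$ into a single factor $|\tilde G_q^{(2\lfloor A_1\rfloor+2)}(1)|$ without spurious dependence on the sizes of individual prime factors of $q$. A secondary technical difficulty is that $\log(x/b)$ is only a constant multiple of $\log x$ when $b$ is close to $x^{1/4}$, so the Selberg--Delange error term $(\log(x/b))^{\kappa-1-A_1}\log\log(x/b)$ has to be tracked carefully before summing over $b$, and the Rankin bound arranged so that its $x^{-\varepsilon}$ gain exactly compensates the $(\log x)^{\kappa-1}$ loss coming from the trivial bound on $S$.
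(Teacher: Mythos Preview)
Your plan is sound and takes a genuinely different route from the paper. The paper does not convolve $f$ directly with the coefficients of $H_q^{-1}$; instead it introduces an auxiliary $\tilde f$ (equal to $f$ on prime powers $p^j$ with $p\nmid q$, and completely multiplicative with $\tilde f(p^j)=f(p)^j$ for $p\mid q$), so that M\"obius inversion reduces the coprime sum to a \emph{finite} sum $\sum_{d\mid q}\mu(d)\tilde f(d)\sum_{k\le x/d}\tilde f(k)$ over squarefree divisors of $q$. Theorem~\ref{GKthm} is then applied to $\tilde f$, producing coefficients $\tilde c_l$ that carry an extra factor $(G_qH_q)^{-1}$ with $G_q(z)=\prod_{p\mid q}(1-f(p)p^{-z})$; the stated $\lambda_j$ only emerge after the algebraic collapse $\sum_{h+k=l}G_q^{(h)}(1)\,((G_qH_q)^{-1})^{(k)}(1)/(h!\,k!)=(H_q^{-1})^{(l)}(1)/l!$. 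Your route---applying Theorem~\ref{GKthm} directly to $f$ and carrying the coprimality via the Dirichlet inverse of $H_q$---is conceptually cleaner, with the Pochhammer identity delivering $\lambda_j$ at once; the price is that you sum over all $b\mid q^\infty$, so higher prime powers must be controlled, and this is where condition~$(4)$ earns its keep in your argument (whereas in the paper it is only used to make $\tilde f$ satisfy~\eqref{extraconditions}).

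Two points need tightening. First, the majorization $|(H_q^{-1})^{(h)}(1)|\le\tilde G_q^{(h)}(1)$ is not literal: the left side equals $\bigl|\sum_{b\mid q^\infty}g(b)(-\log b)^h/b\bigr|$, which includes non-squarefree $b$, while $\tilde G_q$ sees only the squarefree part. You get $\ll$ rather than $\le$ after checking that each local factor $\sum_{k\ge 0}|g(p^k)|p^{-k}$ is $1+|f(p)|/p+O_\kappa(p^{-2})$, so the discrepancy is a uniformly bounded Euler product. Second, splitting at $b=x^{1/4}$ does not recover the stated $O(x^{3/4})$ error: Rankin against $b^{-3/4}$ saves only $x^{-1/16}$ from that threshold, leaving $x^{15/16}(\log x)^{\kappa-1}$. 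You should split near $b\asymp x$ (the paper uses $d\le x/2$): for $x/4<b\le x$ one has $|S(x/b)|\ll_\kappa 1$, whence $\sum_{x/4<b\le x}|g(b)|\le x^{3/4}\sum_{b\mid q^\infty}|g(b)|\,b^{-3/4}$ directly. The binomial step must then cope with $\log b/\log x$ close to $1$; the Lagrange remainder together with $\log(x/b)\ge\log 4$ keeps this bounded, and the completion of $\sum_{b\le x/4}$ to the full series (via a second Rankin bound with an auxiliary exponent $E\approx\lfloor A_1\rfloor+2$) is precisely what pushes the derivative order on $\tilde G_q$ up to $2\lfloor A_1\rfloor+2$.
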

\begin{proof}
To begin with, let us define an auxiliary multiplicative function $\tilde{f}$ such that
$$\tilde{f}(p^{j})=\left\{ \begin{array}{ll}
        f(p^{j}) & \mbox{if $p\nmid q$};\\
        f(p)^j & \mbox{otherwise}.\end{array} \right.$$ 
Then we may rewrite the sum in question as
\begin{align}
\label{removingcoprimality}
\sum_{\substack{n\leq x\\ (n,q)=1}}\tilde{f}(n)=\sum_{\substack{n\leq x}}\tilde{f}(n)\sum_{d|n,d|q}\mu(d)=\sum_{d|q}\mu(d)\sum_{\substack{n\leq x\\ d|n}}\tilde{f}(n)&=\sum_{d|q}\mu(d)\sum_{\substack{k\leq x/d}}\tilde{f}(dk)\nonumber\\
&=\sum_{\substack{d|q\\ d\leq x/2}}\mu(d)\sum_{\substack{k\leq x/d}}\tilde{f}(dk)+\sum_{\substack{d|q\\ x/2<d\leq x}}\mu(d)\tilde{f}(d).
\end{align}
The completely multiplicative structure of $\tilde{f}$ on the numbers divisible only by prime factors of $q$ allows us to rewrite the first double sum in \eqref{removingcoprimality} as
\begin{equation}
\label{usingtildef}
\sum_{\substack{d|q\\ d\leq x/2}}\mu(d)\tilde{f}(d)\sum_{\substack{k\leq x/d}}\tilde{f}(k).
\end{equation}
Moreover, since $\tilde{f}$ equals $f$ on the primes, we have
$\sum_{p\leq x}\tilde{f}(p)\log p=\sum_{p\leq x}f(p)\log p$
and it is not difficult to show that the two conditions \eqref{extraconditions} hold for $\tilde{f}$ as well, if $C>\kappa^2$. Thus, an application of Theorem \ref{GKthm} leads to an evaluation of \eqref{usingtildef} as
\begin{align}
\label{usingGKthm}
&=x\sum_{l=0}^{J}\frac{\tilde{c}_l}{\Gamma(\alpha-l)}\sum_{\substack{d|q\\ d\leq x/2}}\frac{\mu(d)\tilde{f}(d)}{d}(\log(x/d))^{\alpha-l-1}\\
&+O\bigg(x\sum_{\substack{d|q\\ d\leq x/2}}\frac{|\tilde{f}(d)|}{d}(\log(x/d))^{\kappa-A_1-1}(\log\log x)\bigg),\nonumber
\end{align}
where analogously to the definition of $c_l$ we define
$$\tilde{c}_l= \frac{1}{l!}\frac{d^l}{dz^l}\bigg(\zeta_N(z)^{-\alpha}\tilde{F}(z)\frac{((z-1)\zeta(z))^{\alpha}}{z}\bigg)_{z=1},$$
with 
$$\tilde{F}(z):=G_q(z)^{-1}\prod_{\substack{p\leq N\\ p\nmid q:}}\sum_{k=0}^{\infty}\frac{f(p^{k})}{p^{kz}}\ \textrm{and}\ G_q(z):=\prod_{p|q}\bigg(1-\frac{f(p)}{p^z}\bigg).$$
The second double sum in \eqref{removingcoprimality} instead is upper bounded by
\begin{equation}
\label{largedivofq}
\ll x^{3/4}\sum_{d|q}\frac{|\tilde{f}(d)|}{d^{3/4}}\leq x^{3/4}\sum_{d|q}\frac{d_{\kappa}(d)}{d^{3/4}},
\end{equation}
since $q$ is squarefree. We see that we may rewrite $\tilde{F}(z)$ as $F(z)G_q(z)^{-1}H_q(z)^{-1}$. 

Hence, $\tilde{c}_l$ will be
$$\tilde{c}_l=\sum_{k=0}^{l}\frac{\frac{d^k}{dz^k}(G_q(z)^{-1}H_q(z)^{-1})|_{z=1}}{k!}c_{l-k}.$$
By Lemma \ref{lemderivativeseulerproduct0} each $c_{l-k}(\alpha)$ is uniformly bounded by a constant depending on $\kappa,A_1$ and the implicit constant in \eqref{mainstatistic1}. The coefficients $\tilde{c}_l$ may potentially depend on $q$. However, we have
$$G_q(z)H_q(z)=\prod_{p|q}\bigg(1-\frac{f(p)}{p^z}\bigg)\bigg(1+\frac{f(p)}{p^z}+\frac{f(p^2)}{p^{2z}}+\cdots\bigg)=\prod_{p|q}\bigg(1+O_{\kappa}\bigg(\frac{1}{p^{2\Re(z)}}\bigg)\bigg),$$
as we can see from: $|f(p^j)-f(p^{j-1})f(p)|\leq (\kappa+1)(d_{\kappa}(p^{j})+d_{\kappa}(p^{j-1})),$
for any $j\geq 2$. 

We deduce that $G_q(z)H_q(z)$ defines a non-vanishing analytic function on $\Re(z)\geq 1$ and so does its inverse. This shows the possibility to estimate the coefficient $\tilde{c}_l$ with a bound free on the dependence of $q$. Another way to show this could be to argue as in the proof of Lemma \ref{lemderivativeseulerproduct0}, because $(G_q(z)H_q(z))^{-1}$ coincides with the Dirichlet series of a function with a controlled growth and supported only on square-full integers.

Let us now focus on studying the sums over $d$ in the main term of \eqref{usingGKthm}. By the generalized binomial expansion (see e.g. the first paragraph in chapter II.5 of \cite{T}), we find for any $0\leq l\leq J$
\begin{align}
\label{usingbinexp}
\sum_{\substack{d|q\\ d\leq x/2}}\frac{\mu(d)\tilde{f}(d)}{d}(\log(x/d))^{\alpha-l-1}&=(\log x)^{\alpha-l-1}\sum_{h=0}^{J-l}\frac{\binom{\alpha-l-1}{h}(-1)^h}{\log^h x}\sum_{\substack{d|q\\ d\leq x/2}}\frac{\mu(d)\tilde{f}(d)}{d}\log^h d\\
&+O_{\kappa,J}\bigg((\log x)^{\Re(\alpha)-J-2}\sum_{d|q}\frac{|\tilde{f}(d)|}{d}(\log d)^{J-l+1}\bigg).\nonumber
\end{align}
Completing the above sums to all the divisors of $q$ gives an error in \eqref{usingbinexp} of at most
$$\ll_{\kappa,J}2^E(\log x)^{\Re(\alpha)-l-1-E}\sum_{d|q}\frac{|\tilde{f}(d)|}{d}(\log d)^{J-l+E},$$
for any $E>0$, since $x/2\geq \sqrt{x}$ on $x\geq 4$. Similarly, the error term in \eqref{usingGKthm} can be estimated with
\begin{align}
\label{errorforlarged}
\ll_{\kappa,A_1}(\log x)^{\kappa-A_1-1}(\log\log x)\bigg(\sum_{\substack{d|q}}\frac{|\tilde{f}(d)|}{d}+\sum_{\substack{d|q}}\frac{|\tilde{f}(d)|}{d}\frac{\log d}{\log x}\bigg).
\end{align}
Next, since $q$ is squarefree we have
$$\sum_{d|q}\frac{|\tilde{f}(d)|}{d^z}=\prod_{p|q}\bigg(1+\frac{|f(p)|}{p^z}\bigg)=\tilde{G}_q(z)$$
and we can rewrite \eqref{usingbinexp} as
\begin{align}
\label{usingbinexp2}
&=(\log x)^{\alpha-l-1}\sum_{h=0}^{J-l}\frac{\binom{\alpha-l-1}{h}}{\log^h x}G_q^{(h)}(1)\\
&+O_{\kappa,J}\bigg((\log x)^{\Re(\alpha)-J-2}|\tilde{G}_q^{(J-l+1)}(1)|\bigg)\nonumber\\
&+O_{\kappa,J}\bigg(2^E(\log x)^{\Re(\alpha)-l-1-E}|\tilde{G}_q^{(J-l+E)}(1)|\bigg)\nonumber
\end{align}
and \eqref{errorforlarged} as
\begin{equation}
\label{errorforlarged2}
\ll_{\kappa,A_1}(\log x)^{\kappa-A_1-1}(\log\log x)\bigg(|\tilde{G}_q(1)|+\frac{|\tilde{G}_q^{(1)}(1)|}{\log x}\bigg).
\end{equation}
Inserting \eqref{usingbinexp2} and \eqref{errorforlarged2} into \eqref{usingGKthm} and rearranging, we have overall found
\begin{align}
\label{maintermGK}
\sum_{\substack{n\leq x\\ (n,q)=1}}f(n)&=x\sum_{j=0}^{J}\frac{(\log x)^{\alpha-j-1}}{\Gamma(\alpha-j)}\sum_{h+l=j}\frac{G_q^{(h)}(1)\tilde{c}_l}{h!}\\
&+O\bigg(x(\log x)^{\Re(\alpha)-J-2}\sum_{l=0}^{J}\frac{|\tilde{c}_l\tilde{G}_q^{(J-l+1)}(1)|}{|\Gamma(\alpha-l)|}\bigg)\nonumber\\ 
&+O\bigg(2^E x(\log x)^{\Re(\alpha)-1-E}\sum_{l=0}^{J}\frac{|\tilde{c}_l\tilde{G}_q^{(J-l+E)}(1)|}{|\Gamma(\alpha-l)|(\log x)^l}\bigg)\nonumber\\
&+O\bigg(x(\log x)^{\kappa-A_1-1}(\log\log x)\bigg(|\tilde{G}_q(1)|+\frac{|\tilde{G}_q^{(1)}(1)|}{\log x}\bigg)\bigg)\nonumber\\
&+O\bigg(x^{3/4}\sum_{d|q}\frac{d_{\kappa}(d)}{d^{3/4}}\bigg)\nonumber
\end{align}
By definition of $\tilde{c}_l$, the $j$-th coefficient in the sum in the main term in the displayed equation above can be rewritten as
\begin{align*}
&\frac{1}{\Gamma(\alpha-j)}\sum_{h=0}^{j}\frac{G_q^{(h)}(1)}{h!}\sum_{k=0}^{j-h}\frac{(G_q(z)^{-1}H_q(z)^{-1})^{(k)}(1)}{k!}c_{j-h-k}\\
&=\frac{1}{\Gamma(\alpha-j)}\sum_{l=0}^{j}c_{j-l}\sum_{k+h=l}\frac{G_q^{(h)}(1)}{h!}\frac{(G_q(z)^{-1}H_q(z)^{-1})^{(k)}(1)}{k!}\\
&=\frac{1}{\Gamma(\alpha-j)}\sum_{l=0}^{j}\frac{c_{j-l}}{l!}(H_q(z)^{-1})^{(l)}(1)\\
&=\frac{1}{\Gamma(\alpha-j)}\sum_{l+h=j}\frac{(H_q^{-1})^{(l)}(1)c_h}{l!}\\
\end{align*}
and in this way is presented as in the statement of the theorem.

Regarding the error term instead, by Lemma \ref{lemderivativeseulerproduct0} and previous considerations, we can prove that all the coefficients $\tilde{c}_l$ are uniformly bounded, thus finding an upper bound of
\begin{align*}
&\ll_{\kappa,J}|\tilde{G}_q^{(J+1)}(1)|x(\log x)^{\Re(\alpha)-J-2}+2^E|\tilde{G}_q^{(J+E)}(1)|x(\log x)^{\Re(\alpha)-1-E}\\
&+(|\tilde{G}_q(1)|+|\tilde{G}_q^{(1)}(1)|)x(\log x)^{\kappa-A_1-1}(\log\log x)+x^{3/4}\sum_{d|q}\frac{d_{\kappa}(d)}{d^{3/4}},
\end{align*}
where we also used the continuity of $\Gamma(\alpha-l)^{-1}$ over the compact set $|\alpha|\leq \kappa$. Moreover, since we have
$$|\tilde{G}_q^{(h)}(1)|=\sum_{d|q}\frac{|\tilde{f}(d)|\log^h d}{d},$$
for any $h\geq 0$, it is clear that $|\tilde{G}_q^{(a)}(1)|\leq |\tilde{G}_q^{(b)}(1)|$, for any $0\leq a\leq b$. Thanks to this last inequality we may simplify the error term in \eqref{maintermGK} further to 
$$\ll_{\kappa,A_1} |\tilde{G}_q^{(2\lfloor A_1\rfloor+2)}(1)|x(\log x)^{\kappa-A_1-1}(\log\log x)+x^{3/4}\sum_{d|q}\frac{d_{\kappa}(d)}{d^{3/4}},$$
if we let $E:=\lfloor A_1\rfloor+2\geq A_1+1+|\Re(\alpha)|-\kappa$. Hence also the error term above is in the form contained in the statement of the theorem, thus concluding its proof.
\end{proof}
\section{The error terms in Proposition \ref{Prop5.1}}
From now on we are going to specialize the function $f$ to be as in the statement of Theorem \ref{thmalpha}, considering $\alpha,\beta,A_1,A_2$ as in \eqref{relationbetaA_1}. To begin with, we start with the following useful lemma.
\begin{lem}
\label{lemmaarithmquadrineq}
Under the usual notation, we have $|\alpha-1|^2\leq \beta+O_{\kappa}((\log N)^{-\min\{A_1,A_2\}})\ll \beta$, if $N$ large enough with respect to $A_1,A_2,\kappa$ and the implicit constants \eqref{mainstatistic1}--\eqref{mainstatistic2}.
\end{lem}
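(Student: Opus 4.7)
The plan is to compare $|\alpha-1|^2$ to $\beta$ via a Cauchy--Schwarz argument on the weighted prime sum $\sum_{p\leq N}(f(p)-1)\log p$, whose first moment we can read off from \eqref{mainstatistic1} and the prime number theorem, and whose second moment is exactly the quantity controlled by \eqref{mainstatistic2}.

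Concretely, I would start by noting that subtracting the prime number theorem $\sum_{p\leq N}\log p = N+O(N e^{-c\sqrt{\log N}})$ from \eqref{mainstatistic1} gives
$$\sum_{p\leq N}(f(p)-1)\log p = (\alpha-1)N + O\!\left(\frac{N}{(\log N)^{A_1}}\right).$$
The Cauchy--Schwarz inequality, with weights $\log p$, then yields
$$\Bigl|\sum_{p\leq N}(f(p)-1)\log p\Bigr|^{2}\leq \Bigl(\sum_{p\leq N}\log p\Bigr)\Bigl(\sum_{p\leq N}|f(p)-1|^{2}\log p\Bigr).$$
Squaring the expression for the left-hand side (and using $|\alpha-1|\leq \kappa+1$ from the remark after Definition \ref{defgendivfun}) produces $|\alpha-1|^{2}N^{2}+O_{\kappa}(N^{2}/(\log N)^{A_{1}})$, while plugging in the prime number theorem and \eqref{mainstatistic2} on the right-hand side produces $\beta N^{2}+O_{\kappa}(N^{2}/(\log N)^{A_{1}})+O(N^{2}/(\log N)^{A_{2}})$. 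Dividing through by $N^{2}$ gives the first stated inequality
$$|\alpha-1|^{2}\leq \beta + O_{\kappa}\!\left((\log N)^{-\min\{A_{1},A_{2}\}}\right).$$

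To upgrade this to $|\alpha-1|^{2}\ll \beta$, I would invoke the assumptions in \eqref{relationbetaA_1}: since $A_{2}>A_{1}-\kappa(\alpha,\beta)+1$ and $A_{1}>\kappa(\alpha,\beta)$, one has $\min\{A_{1},A_{2}\}>A_{1}-\kappa(\alpha,\beta)$, so
$$(\log N)^{-\min\{A_{1},A_{2}\}}\leq (\log N)^{\kappa(\alpha,\beta)-A_{1}}\leq \beta,$$
by the lower bound $\beta\geq (\log N)^{\kappa(\alpha,\beta)-A_{1}}$ assumed in \eqref{relationbetaA_1}. Absorbing the error term into $\beta$ delivers the final bound $|\alpha-1|^{2}\ll \beta$, with implicit constant depending only on $\kappa$ (and the implicit constants in \eqref{mainstatistic1}--\eqref{mainstatistic2}), provided $N$ is sufficiently large.

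I do not anticipate any serious obstacle; the proof is essentially a bookkeeping exercise. The only point to be mildly careful about is that the error in the Cauchy--Schwarz step gives a cross term of size $|\alpha-1|\cdot N^{2}/(\log N)^{A_{1}}$, but this is absorbed harmlessly using $|\alpha-1|\ll_{\kappa}1$. The essential inputs are just the two hypotheses \eqref{mainstatistic1}--\eqref{mainstatistic2}, the prime number theorem, and the quantitative restrictions collected in \eqref{relationbetaA_1} that guarantee $\beta$ dominates the leftover error.
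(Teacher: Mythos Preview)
Your proposal is correct and follows essentially the same route as the paper: apply Cauchy--Schwarz to $\sum_{p\leq N}(f(p)-1)\log p$, evaluate each side via \eqref{mainstatistic1}, \eqref{mainstatistic2} and the prime number theorem, and then invoke \eqref{relationbetaA_1} to absorb the $O_\kappa((\log N)^{-\min\{A_1,A_2\}})$ error into $\beta$. In fact you spell out the final step (checking $\min\{A_1,A_2\}>A_1-\kappa(\alpha,\beta)$ so that $(\log N)^{-\min\{A_1,A_2\}}\leq\beta$) a bit more explicitly than the paper does.
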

\begin{proof}
If $\alpha=1$, the result is trivial. Assume then $\alpha\neq 1$. By an application of the Cauchy--Schwarz inequality, we have
\begin{align*}
\bigg|\sum_{p\leq N}(f(p)-1)\log p\bigg|^2&\leq \sum_{p\leq N}|f(p)-1|^2\log p\sum_{p\leq N}\log p\\
&=(\beta N+O(N(\log N)^{-A_2}))(N+O(Ne^{-c\sqrt{\log N}})),
\end{align*}
for a suitable $c>0$, by the prime number theorem and equation \eqref{mainstatistic2}. The left hand side of the above inequality instead is $|\alpha-1|^2N^2+O_{\kappa}(N^2(\log N)^{-A_1})$, by \eqref{mainstatistic1}. This implies the thesis if we assume $N$ as in the statement of the lemma and thanks to conditions \eqref{relationbetaA_1}.  
\end{proof}

An application of \cite[Theorem 2.14]{MV} leads to
$$ \sum_{n\leq N}|f(n)|^{2}\ll_\kappa \frac{N}{\log N}\sum_{n\leq N}\frac{|f(n)|^{2}}{n}\leq \frac{N}{\log N}\prod_{p\leq N}\bigg(1+\frac{|f(p)|^2}{p}+\frac{|f(p^2)|^2}{p^2}+\cdots\bigg).$$
By Mertens' theorem we deduce
\begin{equation}
\label{averagefsquare}
\sum_{n\leq N}|f(n)|^{2}\ll_\kappa c_0(|f|^2,\beta+2\Re(\alpha)-1)N(\log N)^{\beta+2\Re(\alpha)-2},
\end{equation}
where 
$$c_0(|f|^2,\beta+2\Re(\alpha)-1)=\prod_{p\leq N}\bigg(1+\frac{|f(p)|^2}{p}+\frac{|f(p^2)|^2}{p^{2}}+\cdots\bigg)\bigg(1-\frac{1}{p}\bigg)^{\beta+2\Re(\alpha)-1}$$
is a positive constant. In particular, it is uniformly bounded in terms of $\kappa,A_1,A_2$ and the implicit constants in \eqref{mainstatistic1}--\eqref{mainstatistic2}, as we may see by applying partial summation from \eqref{mainstatistic1}--\eqref{mainstatistic2} and considering the relations \eqref{relationbetaA_1}.
In conclusion, the first error term in \eqref{estimateprop1} is
\begin{equation}
\label{firsterrortemrprop1.4}
\ll \frac{KN^2(\log N)^{\beta+2(\Re(\alpha)-1)}}{Q_0},
\end{equation}
with the implicit constant depending on all the aforestated parameters.

We now turn to the estimate of the second error term in \eqref{estimateprop1}, but first let us state the following result which we will make use of several times later on. 
\begin{lem}
\label{lemrankinestimate}
For any non-negative multiplicative function $g(n)$ uniformly bounded on the prime numbers by a positive real constant $B$ and such that the sum $S=\sum_{q}g(q)/q$ over all the prime powers $q=p^{k}$, with $k\geq 2$, converges, we have
\begin{equation}
\label{Rankinestimate}
1\ll_{B,S} \sum_{n\leq x}\frac{g(n)}{n}\prod_{p\leq x}\bigg(1+\frac{g(p)}{p}\bigg)^{-1}\ll_{B,S} 1,\ \ \ (x\geq 1).
\end{equation}
\end{lem}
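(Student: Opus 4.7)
I handle the two bounds separately. The upper bound is routine: extending the sum over $n\leq x$ to all integers $n$ with prime factors $\leq x$ gives
$$\sum_{n\leq x}\frac{g(n)}{n}\leq\prod_{p\leq x}\bigg(1+\frac{g(p)}{p}+\frac{g(p^2)}{p^2}+\cdots\bigg),$$
and factoring $(1+g(p)/p)$ out of each Euler factor and using $\log(1+u)\leq u$ (with $g\geq 0$) bounds the correction product by $\exp(\sum_{p,\,k\geq 2}g(p^k)/p^k)=e^{S}$, as required.

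For the lower bound I plan a Markov-plus-truncation argument. Fix a constant $K>2B$ (depending only on $B$), set $y=x^{1/K}$, and restrict to squarefree $n\leq x$ with $P^+(n)\leq y$:
$$\sum_{n\leq x}\frac{g(n)}{n}\geq\prod_{p\leq y}\bigg(1+\frac{g(p)}{p}\bigg)-T(x,y),\qquad T(x,y):=\sum_{\substack{n>x,\ n\text{ sqf}\\ P^+(n)\leq y}}\frac{g(n)}{n}.$$
The key estimate will be $T(x,y)\leq (B/K+o(1))\prod_{p\leq y}(1+g(p)/p)$, which I obtain via $\mathbf{1}_{n>x}\leq\log n/\log x$, the identity $\log n=\sum_{p\mid n}\log p$ for squarefree $n$ (which linearises the summation in $p$), together with $g(p)\leq B$ and Mertens' $\sum_{p\leq y}(\log p)/p=\log y+O(1)=(\log x)/K+O(1)$. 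Since $K>2B$, the tail is dominated by the main term, yielding $\sum_{n\leq x}g(n)/n\gg_{B}\prod_{p\leq y}(1+g(p)/p)$.

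To replace $y$ by $x$ in the final product, I invoke Mertens' second theorem $\sum_{y<p\leq x}1/p=\log K+O(1/\log y)$ and $g(p)\leq B$, giving $\prod_{y<p\leq x}(1+g(p)/p)\leq\exp(B\log K+O(B))\ll_{B} K^{B}$, and hence $\prod_{p\leq y}(1+g(p)/p)\gg_{B}\prod_{p\leq x}(1+g(p)/p)$; the case of small $x$, where both sides are $\Theta_{B,S}(1)$, is trivial. The main obstacle is the lower bound: a naive Markov on the full squarefree sum up to $x$ fails as soon as $B\geq 1$, since the $g(n)/n$-weighted expectation of $\log n$ can exceed $\log x$, so the indicator $\mathbf{1}_{n\leq x}$ no longer dominates its mean. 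Truncating first to primes $\leq x^{1/K}$ forces this expectation below $\log x$ and makes Markov effective, at the harmless cost of a factor $K^{B}$ in the final Euler-product comparison, a cost that depends only on $B$ and is absorbed into the implicit constant.
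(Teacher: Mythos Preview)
Your proof is correct. The paper gives no argument of its own here: it simply cites Lemma~20 of Elliott and Kish. So there is nothing to compare at the level of strategy; you are supplying a self-contained proof where the paper defers to the literature.

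For what it is worth, your approach is the natural one. The upper bound is the usual Euler-product majorant, and the lower bound is a Rankin/Markov argument with the standard twist: truncating the prime support to $y=x^{1/K}$ before applying $\mathbf{1}_{n>x}\le \log n/\log x$, precisely so that the $g(n)/n$-weighted mean of $\log n$ over $y$-smooth squarefree integers drops below $\log x$ when $K>2B$. The passage back from $\prod_{p\le y}$ to $\prod_{p\le x}$ via Mertens costs only a factor $\ll_B K^B$, and the small-$x$ range is indeed trivial. All of the steps check out.
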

\begin{proof}
This is \cite[Lemma 20]{EK} of Elliott and Kish.
\end{proof}
\begin{prop}
\label{propupperboundfc_d}
We have
\begin{equation}
\label{upperboundfc_d1}
\sum_{q\le Q } \frac{1}{q} \sum_{\substack {d|q \\ d>Q_0}} \frac{1}{\phi(d)} \Big| \sum_{n} f(n) c_d(n)\Big|^2\ll_{\kappa} \frac{N^2(\log N)^{\kappa^2+4\kappa+2}}{Q_0}.
\end{equation}
\end{prop}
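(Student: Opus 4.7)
The plan is to estimate the inner sum over $n$ pointwise in $d$ and then dispatch the $d$-sum via standard moments of divisor functions. Writing $q=dk$ for $d\mid q$, the inner $q$-sum becomes $\sum_{q\le Q,\,d\mid q}1/q=\frac{1}{d}\sum_{k\le Q/d}\frac{1}{k}\ll(\log N)/d$, so the left-hand side of \eqref{upperboundfc_d1} is bounded by
$$\ll\log N\sum_{Q_0<d\le Q}\frac{1}{d\phi(d)}\Big|\sum_{n\le N}f(n)c_d(n)\Big|^{2}.$$

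For the inner sum, I would use \eqref{mainpropc_q} and reorder to write
$$\sum_{n\le N}f(n)c_d(n)=\sum_{k\mid d}k\mu(d/k)\sum_{m\le N/k}f(km).$$
The hypothesis $|f|\le d_\kappa$, combined with the submultiplicativity $d_\kappa(km)\le d_\kappa(k)d_\kappa(m)$ (valid for $\kappa\ge 1$, which we may assume after the remark following Definition \ref{defgendivfun}) and the standard estimate $\sum_{m\le y}d_\kappa(m)\ll_\kappa y(\log y)^{\kappa-1}$, yields $\big|\sum_{m\le N/k}f(km)\big|\ll_\kappa d_\kappa(k)\,(N/k)(\log N)^{\kappa-1}$. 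Hence
$$\Big|\sum_{n\le N}f(n)c_d(n)\Big|\ll_\kappa N(\log N)^{\kappa-1}\sum_{k\mid d}|\mu(d/k)|d_\kappa(k)\le N(\log N)^{\kappa-1}d_{\kappa+1}(d),$$
the last inequality following from $|\mu|\le 1$ and the Dirichlet convolution identity $\mathbf{1}\ast d_\kappa=d_{\kappa+1}$.

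It then remains to bound $\sum_{Q_0<d\le Q}d_{\kappa+1}(d)^{2}/(d\phi(d))$. Using $\phi(d)\gg d/\log\log d$ together with the classical moment estimate $\sum_{d\le x}d_{\kappa+1}(d)^{2}\ll_\kappa x(\log x)^{(\kappa+1)^{2}-1}$ (by Selberg--Delange, or more elementarily by Rankin's trick), partial summation gives
$$\sum_{d>Q_0}\frac{d_{\kappa+1}(d)^{2}}{d\phi(d)}\ll_{\kappa}\frac{(\log N)^{(\kappa+1)^{2}-1}\log\log N}{Q_{0}}.$$
Assembling everything produces
$$\sum_{q\le Q}\frac{1}{q}\sum_{\substack{d\mid q\\ d>Q_0}}\frac{1}{\phi(d)}\Big|\sum_{n\le N}f(n)c_d(n)\Big|^{2}\ll_{\kappa}\frac{N^{2}(\log N)^{\kappa^{2}+4\kappa-2}(\log N)(\log\log N)}{Q_{0}},$$
since $2\kappa-2+(\kappa+1)^{2}-1=\kappa^{2}+4\kappa-2$. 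The extra powers of $\log N$ in the target $N^{2}(\log N)^{\kappa^{2}+4\kappa+2}/Q_{0}$ comfortably absorb the remaining $(\log N)(\log\log N)$ factor.

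I do not anticipate any real obstacle in this argument; every step is essentially routine given Shiu-type divisor-moment bounds and the decomposition of the Ramanujan sum. The only mildly delicate point is the pointwise estimate $\sum_{k\mid d}|\mu(d/k)|d_\kappa(k)\le d_{\kappa+1}(d)$, which is an immediate consequence of $\mathbf{1}\ast d_\kappa=d_{\kappa+1}$, and the generous slack in the logarithmic exponent means no effort need be spent optimizing constants.
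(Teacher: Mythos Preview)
Your argument is correct and follows essentially the same route as the paper: swap the $q$ and $d$ sums, open $c_d(n)$ via \eqref{mainpropc_q}, bound the resulting inner sum by $N(\log N)^{O(1)}d_{\kappa+1}(d)$ using submultiplicativity of $d_\kappa$, and then control $\sum_{d>Q_0}d_{\kappa+1}(d)^2/(d\phi(d))$. The only cosmetic differences are that the paper uses the cruder bound $\sum_{m\le N/e}d_\kappa(m)\ll(N/e)(\log N)^\kappa$ via Lemma~\ref{lemrankinestimate} (yielding one extra log), and for the final $d$-sum the paper writes $d_{\kappa+1}(d)^2/\phi(d)$ as $h(d)/d$ for a multiplicative $h$ and invokes Lemma~\ref{lemrankinestimate} directly, rather than going through $\phi(d)\gg d/\log\log d$ and partial summation as you do.
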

\begin{proof}
We initially observe that
\begin{equation}
\label{eq1}
\Big| \sum_{n\leq N} f(n) c_d(n)\Big|\leq \sum_{n\leq N} d_{\kappa} (n) \sum_{e|(n,d)}e\leq \sum_{e|d}e\sum_{\substack{n\leq N\\ e|n}}d_{\kappa}(n),
\end{equation}
by \eqref{mainpropc_q}. Since $\kappa>1$
\begin{equation*}
\sum_{\substack{n\leq N\\ e|n}}d_{\kappa}(n)=\sum_{k\leq N/e}d_{\kappa}(ek)\leq \frac{d_{\kappa}(e)}{e}N\sum_{k\leq N/e}\frac{d_{\kappa}(k)}{k}\ll_{\kappa} \frac{N}{e}d_{\kappa}(e)(\log N)^{\kappa},
\end{equation*}
by Lemma \ref{lemrankinestimate}, which inserted in \eqref{eq1} gives
\begin{equation*}
\Big| \sum_{n\leq N} f(n) c_d(n)\Big|^2\ll_{\kappa} N^2(\log N)^{2\kappa} d_{\kappa+1}^{2}(d).
\end{equation*}
From this we deduce that the left hand side in \eqref{upperboundfc_d1} is
\begin{align*}
\ll_{\kappa} N^2(\log N)^{2\kappa}\sum_{Q_0<d\leq Q}\frac{d_{\kappa+1}^2(d)}{\phi(d)}\sum_{\substack{q\leq Q\\ d|q}}\frac{1}{q}&\leq N^2(\log N)^{2\kappa}\sum_{Q_0<d\leq Q}\frac{d_{\kappa+1}^{2}(d)}{d\phi(d)}\sum_{\substack{q\leq Q/d}}\frac{1}{q}\\
&\ll N^2(\log N)^{2\kappa+1}\sum_{Q_0<d\leq Q}\frac{d_{\kappa+1}^{2}(d)}{d\phi(d)}\\
&\ll_{\kappa} \frac{N^2(\log N)^{\kappa^2+4\kappa+2}}{Q_0},
\end{align*}
again by Lemma \ref{lemrankinestimate}. 
\end{proof}
From now on, we consider $K$ as a large constant so that the term $(\log K)/K$ in Proposition \ref{Prop5.1} is small enough. Since we are assuming $N^{1/2+\delta}\leq Q\leq N$, with $\delta>0$ small, we let $R:=N^{1/2-\delta/2}.$
\section{Lower bounding the integral over the minor arcs}
It remains to lower bound the integral in \eqref{estimateprop1}. To this aim, following the idea and the notations introduced in \cite{HS}, we apply the Cauchy--Schwarz inequality to get
\begin{equation}
\label{C-S}
\int_{\frak m} |{\mathcal F}(\varphi)|^2d\varphi\geq \bigg(\int_{\frak m} |{\mathcal F}(\varphi)\tilde{\mathcal{F}}(\varphi)|d\varphi\bigg)^2\bigg(\int_{\frak m} |\tilde{\mathcal {F}}(\varphi)|^2d\varphi\bigg)^{-1},
\end{equation}
where $\tilde{\mathcal {F}}(\varphi)=\sum_{n\leq N}\tilde{f}(n)e(n\varphi)$ with 
$$\tilde{f}(n)=\sum_{\substack{r|n\\ r\leq R}}g(r)\Phi\bigg(\frac{n}{N}\bigg).$$
Here $g(n)$ is a suitable arithmetic function and $\Phi(t)$ is a suitable smooth function, compactly supported in $[0, 1]$, with $0\leq \Phi(t)\leq 1$ for 
all $0\leq t\leq 1$. The choice of $g$ is fundamental for succeeding in the proof of our main result. We consider a multiplicative function supported on the squarefree integers and zero on all the prime numbers smaller than $C$, where $C$ is as in condition $(4)$ on $q$. On the prime numbers $C<p\leq N$ we put $g(p)=f(p)-1$, if $\Re(\alpha)\geq 1$, and $g(p)=1-f(p)$, otherwise.

We observe that
\begin{equation}
\label{trivparseval}
\int_{\frak m} |\tilde{\mathcal {F}}(\varphi)|^2d\varphi\leq \sum_{n\leq N}|\tilde{f}(n)|^2\leq\sum_{n\leq N}\bigg|\sum_{\substack{r|n\\ r\leq R}}g(r) \bigg|^2,
\end{equation}
by Parseval's indentity. Regarding the integral of ${\mathcal F}(\varphi)\tilde{\mathcal{F}}(\varphi)$ we first report the full proposition proved in \cite{HS}, that was used to find a manageable lower bound for its integral over minor arcs.
\begin{prop}
\label{newprop1} 
Keep notations as above and assume that $KQ_0 \le R\le \sqrt{N}$ and $|f(n)| \ll_{\epsilon} N^{\epsilon}$ for any $\epsilon > 0$ when $n\leq N$.  Then 
\begin{equation*}
\int_{\frak m} |{\mathcal F}(\varphi) \tilde {\mathcal F}(\varphi) | d\varphi \ge \sum_{KQ_0 < q\le R} \Big| \sum_{\substack{r\le R \\ q|r}} \frac{g(r)}{r} \Big| \Big| \sum_{n\le N} 
f(n) c_q(n)\Phi(n/N) \Big|+ O_{\epsilon,\Phi}(\Delta RN^{\frac 12 +\epsilon}  ), 
\end{equation*}
where $\Delta=\max_{r\leq R}|g(r)|$ and $\Phi(t)$ is a suitable smooth function as in \cite[Proposition 3]{HS}.
\end{prop}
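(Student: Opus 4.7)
The plan is to follow the approach of Harper and Soundararajan, realizing $\tilde{\mathcal F}$ as a superposition of smooth peaks localized at rationals and then using positivity to extract the Ramanujan-sum structure. Expand
$$\tilde{\mathcal F}(\varphi)=\sum_{r\le R} g(r)\,T_r(\varphi),\qquad T_r(\varphi):=\sum_{m}\Phi(mr/N)\,e(mr\varphi),$$
and apply Poisson summation in $m$ to write $T_r(\varphi)=(N/r)\sum_{k\in\mathbb Z}\widehat\Phi\bigl(N(\varphi-k/r)\bigr)$. Since $\Phi\in C_c^\infty$, $\widehat\Phi$ is Schwartz and $T_r$ is essentially supported in windows of width $\asymp r/N$ centered on the rationals $k/r$. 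Grouping fractions in lowest terms as $k/r=a/q$ with $(a,q)=1$ (so $r=qs$, $k=as$), only those $r$ divisible by $q$ produce a peak at $a/q$, and the local coefficient of $\tilde{\mathcal F}$ there becomes $N\widehat\Phi(N(\varphi-a/q))\sum_{q\mid r,\, r\le R} g(r)/r$ plus rapidly decaying tails.

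Next I would exploit $|{\mathcal F}\tilde{\mathcal F}|\ge \Re\bigl(\eta(\varphi){\mathcal F}\,\overline{\tilde{\mathcal F}}\bigr)$ for any measurable $|\eta|=1$. Partition the minor arcs into disjoint Farey-like neighborhoods $\mathcal N_{a,q}$ of width $\asymp 1/R^2$ about each $a/q$ with $KQ_0<q\le R$, $(a,q)=1$; the hypotheses $KQ_0\le R\le\sqrt N$ and \eqref{eq0} ensure these are pairwise disjoint and contained in $\frak m$. Let $\eta(\varphi)\equiv\eta_q$ be constant on all $\mathcal N_{a,q}$ sharing the same $q$. By Fourier inversion, $\int_{\mathbb R}{\mathcal F}(a/q+u/N)\,\overline{\widehat\Phi(u)}\,du=\sum_n f(n)\Phi(n/N)e(na/q)$, so after replacing the integration over $\mathcal N_{a,q}$ by that over $\mathbb R$ (an admissible swap up to rapid-decay errors) and summing over $(a,q)=1$ via $c_q(n)=\sum_{(a,q)=1}e(na/q)$, we obtain
$$\int_{\mathcal N_q}{\mathcal F}\,\overline{\tilde{\mathcal F}}\,d\varphi=\overline{\sum_{\substack{r\le R\\ q\mid r}}\tfrac{g(r)}{r}}\sum_{n\le N} f(n)\,\Phi(n/N)\,c_q(n)+\mathrm{error}_q,$$
with $\mathcal N_q=\bigsqcup_{(a,q)=1}\mathcal N_{a,q}$. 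Choosing $\eta_q$ to align this expression with $\mathbb R_{\ge 0}$ and summing over $q$ produces the claimed main term of \eqref{eq81}.

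Finally, I would bound the error by collecting contributions from (i) secondary peaks of $T_r$ at $k/r$ with $q\nmid r$, suppressed by the decay of $\widehat\Phi$; (ii) the tails of the $\mathbb R$-integral outside $\mathcal N_{a,q}$; and (iii) interference between peaks at distinct Farey fractions $a/q\ne a'/q'$, separated by at least $1/(qq')\ge 1/R^2$. Using $\widehat\Phi(u)\ll_M(1+|u|)^{-M}$ together with $|g(r)|\le\Delta$ and $|f(n)|\ll_\epsilon N^\epsilon$, and summing over $r\le R$ and the $O(R^2)$ Farey fractions of denominator $\le R$, these are controlled by $O_{\epsilon,\Phi}(\Delta R N^{1/2+\epsilon})$. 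The main obstacle will be the simultaneous compatibility of the above choices: showing that each $\mathcal N_{a,q}$ is wide enough to capture essentially all of the corresponding peak while narrow enough both to remain inside $\frak m$ and to prevent the peak at $a/q$ from leaking significantly into $\mathcal N_{a',q'}$; this requires the full strength of the separation $1/(qq')\ge 1/R^2$ together with the hypothesis $R\le\sqrt N$, and is precisely the delicate bookkeeping that drives the $N^{1/2+\epsilon}$ savings in the error term.
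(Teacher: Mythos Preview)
The paper does not give its own proof of this proposition: it is quoted verbatim as \cite[Proposition~3]{HS}, and the subsequent Lemma~\ref{lemparseval-shiu} only shows how to remove the smooth weight $\Phi$ afterwards. So there is no ``paper's proof'' to compare against beyond the citation, and your sketch is in effect a reconstruction of the Harper--Soundararajan argument. The overall architecture you describe --- Poisson summation on $T_r$, regrouping the resulting peaks at $k/r=a/q$ to produce the coefficient $\sum_{q\mid r,\,r\le R}g(r)/r$, the pointwise inequality $|\mathcal F\tilde{\mathcal F}|\ge \Re(\eta\,\mathcal F\,\overline{\tilde{\mathcal F}})$ with a unimodular $\eta$ chosen constant on each Farey window, and the identification of $\sum_{(a,q)=1}\sum_n f(n)\Phi(n/N)e(na/q)$ with the Ramanujan-sum twist --- is exactly their method, and your identification of the three error sources (secondary Poisson peaks, tails outside the window, cross-interference between distinct Farey points) is the right bookkeeping.

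Two small points of precision are worth tightening before you call this complete. First, the natural width of the windows $\mathcal N_{a,q}$ is $\asymp 1/N$ (the scale of $\widehat\Phi(N\cdot)$), not $\asymp 1/R^2$; the inequality $R\le\sqrt N$ is what guarantees that the Farey spacing $\ge 1/R^2$ dominates this width and keeps the windows disjoint. Second, to ensure $\mathcal N_{a,q}\subset\mathfrak m$ for $q>KQ_0$ you need to rule out that $a/q$ falls inside a major arc about some $a'/q'$ with $q'\le KQ_0$; this follows from $|a/q-a'/q'|\ge 1/(qq')$ together with the conditions in \eqref{eq0}, which give $1/(qq')\gg 1/(Rq')>K/(q'Q)$. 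With those two details pinned down, your outline matches the cited proof.
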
 
Next, we show how to remove in our set up the smooth cut-off in the sum above.
\begin{lem}
\label{lemparseval-shiu}
Let $R=N^{1/2-\delta/2}$ and assume that $KQ_0 \le R$. Then there exists a smooth function $\Phi(t)$ satisfying the hypotheses in \cite[Proposition 3]{HS} such that
\begin{align*}
\int_{\frak m} |{\mathcal F}(\varphi) \tilde {\mathcal F}(\varphi) | d\varphi \geq \sum_{KQ_0 < q\le R} \Big| \sum_{\substack{r\le R \\ q|r}} \frac{g(r)}{r} \Big| \Big| \sum_{n\le N} 
f(n) c_q(n) \Big|+O_{\delta,\kappa}(N^{1-\delta/11}),
\end{align*}
if $N$ large enough in terms of $\delta$ and $\kappa$.
\end{lem}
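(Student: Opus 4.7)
The plan is to upgrade Proposition \ref{newprop1} by specializing the smooth weight $\Phi$ so that the discrepancy between the weighted and unweighted inner sums becomes negligible. I would take $\Phi:\mathbb{R}\to[0,1]$ smooth, compactly supported in $[0,1]$, identically $1$ on $[\eta,1-\eta]$, and satisfying $\|\Phi^{(j)}\|_{\infty}\ll_{j}\eta^{-j}$ for each fixed $j\geq 0$, where $\eta=N^{-\delta/20}$. Since in \cite[Proposition 3]{HS} the $\Phi$-dependence of the implicit constant enters only through a fixed finite number of derivatives of $\Phi$, this choice costs at most a polynomial factor in $\eta^{-1}=N^{\delta/20}$ in the error of Proposition \ref{newprop1}. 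Combined with $R=N^{1/2-\delta/2}$ and $\Delta=\max_{r\leq R}|g(r)|\leq\max_{r\leq R}d_{\kappa+1}(r)=N^{o(1)}$ (valid since $g$ is squarefree-supported with $|g(p)|\leq\kappa+1$), the inherited error becomes $\ll N^{1-\delta/2+o(1)}$ after choosing $\varepsilon=\varepsilon(\delta)$ small, comfortably within the target $N^{1-\delta/11}$.

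Next I would control, for each $q$, the discrepancy
\begin{equation*}
D_q:=\Big|\sum_{n\leq N}f(n)c_q(n)-\sum_{n\leq N}f(n)c_q(n)\Phi(n/N)\Big|\leq\sum_{n\in I_{\eta}}|f(n)|\,|c_q(n)|,
\end{equation*}
with $I_{\eta}=[1,\eta N]\cup[(1-\eta)N,N]$. Inserting $|f(n)|\leq d_{\kappa}(n)$ and $|c_q(n)|\leq\sum_{d\mid(n,q)}d$, swapping the order of summation and substituting $n=dm$, a routine Rankin-type estimate (essentially Lemma \ref{lemrankinestimate}) gives $\sum_{m\leq y}d_{\kappa}(m)\ll_{\kappa}y(\log N)^{\kappa-1}$ uniformly for $y$ in the relevant range (which is $\geq N^{1/2+\delta/2-\delta/20}$ since $d\mid q\leq R$), yielding
\begin{equation*}
D_q\ll_{\kappa}\eta N(\log N)^{\kappa-1}\sum_{d\mid q}d_{\kappa}(d)\ll_{\kappa}\eta N(\log N)^{\kappa-1}d_{\kappa+1}(q),
\end{equation*}
where only squarefree $q$ contribute effectively since $g$ is supported on squarefrees.

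To finish, I would sum $D_q$ against the outer weights. For squarefree $q$, writing $r=qr'$ with $(r',q)=1$ and applying Mertens' theorem,
\begin{equation*}
\Big|\sum_{\substack{r\leq R\\ q\mid r}}\frac{g(r)}{r}\Big|\leq\frac{|g(q)|}{q}\prod_{p\leq R}\Big(1+\frac{|g(p)|}{p}\Big)\ll_{\kappa}\frac{|g(q)|}{q}(\log N)^{\kappa+1}.
\end{equation*}
Combining this with the bound on $D_q$, and using $|g(q)|\leq d_{\kappa+1}(q)$ together with the standard estimate $\sum_{q\leq R}d_{\kappa+1}^{2}(q)/q\ll_{\kappa}(\log N)^{(\kappa+1)^{2}}$, the total contribution of the discarded tails to the right-hand side of Proposition \ref{newprop1} is $\ll_{\kappa}\eta N(\log N)^{C(\kappa)}\ll N^{1-\delta/20+o(1)}$ for some $C(\kappa)>0$, which for $N$ large enough in terms of $\delta$ and $\kappa$ is $\ll N^{1-\delta/11}$. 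Adding this to the inherited error from Proposition \ref{newprop1} yields the claimed bound.

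The main obstacle is bookkeeping rather than any new idea: one must verify that the $\Phi$-dependence of the error in \cite[Proposition 3]{HS} is genuinely polynomial in $\eta^{-1}$, so that taking $\Phi$ depending on $N$ through $\eta=N^{-\delta/20}$ is permissible, and then calibrate $\eta$ and $\varepsilon$ so that both the Proposition \ref{newprop1} error and the tail contribution beat $N^{1-\delta/11}$. Everything else reduces to Shiu/Rankin-type estimates, Mertens' theorem, and the squarefree support of $g$.
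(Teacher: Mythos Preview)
Your proposal is correct and follows essentially the same approach as the paper: construct $\Phi$ to be identically $1$ except on intervals of width $\asymp N^{-c\delta}$ near the endpoints, track that the $\Phi$-dependence in the error of Proposition~\ref{newprop1} enters linearly through the Fourier decay constant (so costs only a factor $N^{c\delta}$), and bound the resulting tail sums $\sum_{n\in I_\eta}d_\kappa(n)|c_q(n)|$ via the divisor-sum decomposition of $c_q$. The only imprecision is that the terminal segment $[(1-\eta)N,N]$ genuinely requires Shiu's short-interval theorem rather than a plain Rankin bound (as you acknowledge in your closing sentence and as the paper does explicitly); once that is inserted, your parameter choices and the paper's ($T=N^{\delta/10}$ versus your $\eta=N^{-\delta/20}$) lead to the same conclusion.
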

\begin{proof}
Let $\Psi(t):\mathbb{R}\rightarrow [0,1]$ be a smooth function compactly supported on $[-1,1]$ with 
$$\int_{\mathbb{R}}\Psi(t)dt=1.$$
Then consider the following convolution
$$\Phi(t)=T\textbf{1}_{[1/T, 1-1/T]}(t)\ast \Psi(Tt)=T\int_{1/T}^{1-1/T}\Psi(T(s-t))ds,$$
for any real number $T\geq 4$.
A quick analysis of this integral reveals that $\Phi(t)$ is a smooth function such that
\[\Phi(t)= \left\{ \begin{array}{lll}
         1& \mbox{if $2/T\leq t\leq 1-2/T$};\\
         \in [0,1] & \mbox{if $1-2/T\leq t\leq 1$ or $0\leq t\leq 2/T$};\\
         0 & \mbox{if $t\geq 1$ or $t\leq 0$}.\end{array} \right. \]
In particular, $\Phi(t)$ is a smooth function, compactly supported in $[0, 1]$, with $0\leq \Phi(t)\leq 1$ for 
all $0\leq t\leq 1$, and with
$$\int_{0}^{1}\Phi(t)dt\geq 1-\frac{4}{T}.$$
It is easy to see that
$$\Phi^{(k)}(t)\ll T^{k}||\Psi^{(k)}||_{L^1},$$
for every $k\geq 0$. Let 
$$F(\xi)=\int_{\mathbb{R}}\Phi(t)e^{-2\pi i \xi t}dt$$
be the Fourier transform of $\Phi(t)$. Then $F$ is continuous and $F(0)=\int_{\mathbb{R}}\Phi(t)dt<\infty$. Moreover, by using $k$ times integration by parts and the definition of $\Phi(t)$ we immediately deduce that
$$F(\xi)=\frac{1}{(2\pi i\xi)^{k}}\int_{1-2/T}^{1}\Phi^{(k)}(t)e^{-2\pi i \xi t}dt+\frac{1}{(2\pi i\xi)^{k}}\int_{0}^{2/T}\Phi^{(k)}(t)e^{-2\pi i \xi t}dt\ll \frac{T^{k-1}||\Psi^{(k)}||_{L^1}}{(2\pi|\xi|)^k},$$
where the implicit constant is absolute. In particular, we get for all $\xi\in\mathbb{R}$ that $F(\xi)\ll T(1+|\xi|)^{-2}$. This is equivalent to say that $\Phi$ as defined satisfies the conditions of the smooth weight introduced in \cite{HS}. Moreover, since in the proof of \cite[Proposition 3]{HS}, which corresponds to Proposition \ref{newprop1} here, it was only used the bound $F(\xi)\ll (1+|\xi|)^{-2}$, where the implicit constant here is directly proportional to that in the error term of Proposition \ref{newprop1}, we may conclude that this last one is indeed
$$\ll_\epsilon T\Delta RN^{1/2+\epsilon}.$$
For any $4\leq T\leq \sqrt{N}$, we may write  
$$\sum_{n\le N} 
f(n) c_q(n)\Phi\left(\frac{n}{N}\right)=\sum_{n\le N} 
f(n) c_q(n)+O\bigg(\sum_{N(1-2/T)<n\le N} d_{\kappa}(n)(q,n)+\sum_{n\le 2N/T} d_{\kappa}(n)(q,n)\bigg).$$
We can estimate the first sum in the big-Oh term with
$$\ll \sum_{e|q}ed_{\kappa}(e)\sum_{N(1-2/T)/e<l\leq N/e}d_{\kappa}(l)\ll_{\delta} d_{\kappa+1}(q)\frac{N}{T\log N}\exp\bigg(\sum_{p\leq N}\frac{{\kappa}}{p}\bigg)\ll_{\delta,\kappa} d_{\kappa+1}(q)\frac{N(\log N)^{\kappa-1}}{T},$$
for every $4\leq T\leq \sqrt{N}$, using Shiu's theorem \cite[Theorem 1]{S}, Mertens' theorem and considering $\delta$ small enough. On average over $q$ in Lemma \ref{lemparseval-shiu}, by upper bounding $|g(r)|\leq d_{\kappa+1}(r)$, it will contribute
$$\ll_{\delta,\kappa} \frac{N(\log N)^{\kappa-1}}{T}\sum_{q\le R}\frac{d_{\kappa+1}(q)d_{\kappa+1}(q)}{q} \sum_{\substack{r\le R}} \frac{d_{\kappa+1}(r)}{r} \ll_{\kappa} \frac{N(\log N)^{(\kappa+1)^2+2\kappa}}{T}=\frac{N(\log N)^{\kappa^2+4\kappa+1}}{T},$$
say, for any $4\leq T\leq \sqrt{N}$. The second sum in the big-Oh term above can be estimated similarly, but replacing the application of Shiu's theorem with an application of Lemma \ref{lemrankinestimate}, and gives the same contribution.

Finally, observing that $\Delta$ satisfies $\Delta\ll R^{O_\kappa(1/\log\log R)}$ (see e.g. \cite[ch. I, Theorem 5.4]{T} for the case of $d_2$, which can be easily generalized to a general $d_{\kappa}$), it is easy to see that letting $\epsilon:=\delta/4$, say, the error term in Proposition \ref{newprop1} becomes 
$$\ll_\delta TN^{1-\delta/4+O_\kappa(1/\log\log N)}\leq TN^{1-\delta/5}\leq N^{1-\delta/10},$$ if $N$ large enough in terms of $\delta$ and $\kappa$, by letting $T:=N^{\delta/10}$. Putting the above considerations together we can now deduce the lemma from Proposition \ref{newprop1}.
\end{proof}

We now find an upper bound for the sum of $|\tilde{f}(n)|^2$, but before we state the next lemma which will be useful later.
\begin{lem}
\label{lemmaparticularaverage}
Let $g(n)$ be a multiplicative function supported on squarefree integers such that $|g(n)|\leq d_{\kappa+1}(n)$ and
\begin{equation}
\label{applmainstatistic2}
\sum_{p\leq x}|g(p)|^2\log p=\beta x+O\bigg(\frac{x}{(\log x)^{A_2}}\bigg)\ \ \ (2\leq x\leq R),
\end{equation}
with $\kappa, \beta, A_2$ and $R$ as usual. Then we have
\begin{equation}
\label{particularaverage}
\sum_{q\leq R}\frac{|g(q)|^2}{q}\bigg(\sum_{d|q}\frac{d_{\kappa+1}(d)}{d^{3/4}}\bigg)^2\ll (\log N)^{\beta},
\end{equation}
with an implicit constant depending on $\kappa,A_2$ and that in \eqref{applmainstatistic2}.
\end{lem}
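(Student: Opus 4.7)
The plan is to reduce the left-hand side of \eqref{particularaverage} to a single squarefree multiplicative sum that can be controlled by Lemma \ref{lemrankinestimate}, and then evaluate the resulting Euler-product upper bound by partial summation from \eqref{applmainstatistic2}. First, I would observe that for any squarefree $q$,
$$\sum_{d\mid q}\frac{d_{\kappa+1}(d)}{d^{3/4}}=\prod_{p\mid q}\Bigl(1+\frac{\kappa+1}{p^{3/4}}\Bigr),$$
so the square of this is a non-negative multiplicative function of $q$. Since $g$ is supported on squarefrees, the whole summand on the left of \eqref{particularaverage} is a non-negative multiplicative function $G(q)/q$, with
$$G(p)=|g(p)|^{2}\Bigl(1+\frac{\kappa+1}{p^{3/4}}\Bigr)^{2}\ll_{\kappa}1,$$
using $|g(p)|\le d_{\kappa+1}(p)=\kappa+1$, and $G(p^{k})=0$ for $k\ge 2$. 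In particular, the prime-power tail $\sum_{p^{k},\,k\ge 2}G(p^{k})/p^{k}$ trivially converges.

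Next, I would apply Lemma \ref{lemrankinestimate} (the Elliott--Kish estimate) with this $G$ to obtain
$$\sum_{q\le R}\frac{G(q)}{q}\ll_{\kappa}\prod_{p\le R}\Bigl(1+\frac{G(p)}{p}\Bigr)\le\exp\!\Bigl(\sum_{p\le R}\frac{G(p)}{p}\Bigr).$$
Expanding the square $(1+(\kappa+1)/p^{3/4})^{2}=1+O_{\kappa}(p^{-3/4})$ and using $|g(p)|^{2}\ll_{\kappa}1$, the error contribution $\sum_{p}|g(p)|^{2}/p^{7/4}$ is a finite constant depending only on $\kappa$, so
$$\sum_{p\le R}\frac{G(p)}{p}=\sum_{p\le R}\frac{|g(p)|^{2}}{p}+O_{\kappa}(1).$$

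The remaining step is to evaluate $\sum_{p\le R}|g(p)|^{2}/p$ by partial summation from the Chebyshev-type asymptotic \eqref{applmainstatistic2}. Writing $S(x)=\sum_{p\le x}|g(p)|^{2}\log p=\beta x+O(x/(\log x)^{A_{2}})$, partial summation yields
$$\sum_{p\le R}\frac{|g(p)|^{2}}{p}=\int_{2^{-}}^{R}\frac{dS(t)}{t\log t}=\beta\log\log R+O_{\kappa,A_{2}}(1),$$
where the $O$-term absorbs the contribution of the error in \eqref{applmainstatistic2} (which integrates to a constant because $A_{2}>1$) together with the boundary terms. Since $R=N^{1/2-\delta/2}$, this gives $\log\log R=\log\log N+O(1)$, and exponentiating yields $\sum_{q\le R}G(q)/q\ll(\log N)^{\beta}$, as required.

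The only nontrivial step is the partial summation: one must check that the hypothesis $A_{2}>1$ built into \eqref{relationbetaA_1} is sufficient to make the error integral converge uniformly. Everything else is a bookkeeping exercise on the Euler factors, and the implicit constant depends on $\kappa$, $A_{2}$ and the implicit constant in \eqref{applmainstatistic2}, exactly as stated.
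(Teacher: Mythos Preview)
Your proof is correct and actually more direct than the paper's. The paper proceeds by expanding the square $\bigl(\sum_{d\mid q}d_{\kappa+1}(d)/d^{3/4}\bigr)^2$ as a double sum over $d_1,d_2$, swapping the order of summation, and then bounding the resulting inner sum $\sum_{q\le R,\ [d_1,d_2]\mid q}|g(q)|^2/q$ by $\frac{|g([d_1,d_2])|^2(d_1,d_2)}{d_1d_2}\sum_{k\le R}|g(k)|^2/k$; the factor $\sum_{k\le R}|g(k)|^2/k$ is handled by Lemma~\ref{lemrankinestimate} plus partial summation from \eqref{applmainstatistic2}, while the remaining double sum over $d_1,d_2$ is shown to be $O_\kappa(1)$ via some gcd/lcm bookkeeping. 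You instead exploit the squarefree support at the outset to write the divisor sum as $\prod_{p\mid q}(1+(\kappa+1)/p^{3/4})$, recognise the whole summand as $G(q)/q$ for a single bounded multiplicative $G$, and apply Lemma~\ref{lemrankinestimate} once. Both routes land on the same partial-summation computation of $\sum_{p\le R}|g(p)|^2/p$, but yours avoids the square-expansion and the lcm manipulations entirely. (Incidentally, the convergence of the error integral only needs $A_2>0$, so your appeal to $A_2>1$ is more than sufficient.)
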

\begin{proof}
Expanding the square out and swapping summation we find that the sum in \eqref{particularaverage} is
\begin{align*}
&\sum_{d_1,d_2\leq R\ \textrm{squarefree}}\frac{d_{\kappa+1}(d_1)d_{\kappa+1}(d_2)}{d_1^{3/4}d_2^{3/4}}\sum_{\substack{q\leq R\\ q\equiv 0\pmod{[d_1,d_2]}}}\frac{|g(q)|^2}{q}\\
&\leq \sum_{d_1,d_2\leq R}\frac{d_{\kappa+1}(d_1)d_{\kappa+1}(d_2)|g([d_1,d_2])|^2(d_1,d_2)}{d_1^{7/4}d_2^{7/4}}\sum_{k\leq R}\frac{|g(k)|^2}{k},
\end{align*}
where $[a,b]$ stands for the least common multiple of integers $a$ and $b$. The innermost sum is $\ll (\log N)^{\beta}$, by Lemma \ref{lemrankinestimate} and partial summation from \eqref{applmainstatistic2}, with an implicit constant depending on $\kappa,A_2$ and that of \eqref{applmainstatistic2}. On the other hand, the double sum over $d_1,d_2$ is
$$\leq \sum_{d_1,d_2\leq R}\frac{d_{\kappa+1}(d_1)^3d_{\kappa+1}(d_2)^3(d_1,d_2)}{d_1^{7/4}d_2^{7/4}}\leq \sum_{e\leq R}\frac{d_{\kappa+1}(e)^6}{e^{5/2}}\bigg(\sum_{k}\frac{d_{\kappa+1}(k)^3}{k^{7/4}}\bigg)^2.$$
Since 
$$\sum_{k}\frac{d_{\kappa+1}(k)^3}{k^{7/4}}\ll_{\kappa} 1,$$
by using e.g. $d_{\kappa+1}(k)\ll_{\kappa}k^{3/24}$, we obtain that the final double sum above is 
$$\ll_{\kappa} \sum_{e\leq R}\frac{d_{\kappa+1}(e)^6}{e^{5/2}}\ll_{\kappa} 1.$$
Collecting the above estimate together we get \eqref{particularaverage}.
\end{proof}
\begin{prop}
\label{propinttildeF}
Let $R=N^{1/2-\delta/2}$ as before. Then we have 
\begin{align}
\label{finalestimateintg}
\sum_{n\leq N}|\tilde{f}(n)|^2\ll N(\log N)^{\beta+2|\Re(\alpha)-1|},
\end{align}
where the implicit constant may depend on $\delta,\kappa,A_1,A_2$ and that in \eqref{mainstatistic1}--\eqref{mainstatistic2}.
\end{prop}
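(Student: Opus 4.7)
I would start from the Parseval bound implicit in \eqref{trivparseval}: since $|\Phi|\le 1$, it suffices to estimate $\sum_{n\le N}|\sum_{r\mid n,\,r\le R}g(r)|^2$. Expanding the square, swapping the order of summation and using $\#\{n\le N:[r_1,r_2]\mid n\}=N/[r_1,r_2]+O(1)$ gives
\begin{equation*}
\sum_{n\le N}|\tilde f(n)|^2\le N\sum_{r_1,r_2\le R}\frac{g(r_1)\overline{g(r_2)}}{[r_1,r_2]}+O\Bigl(\Bigl(\sum_{r\le R}|g(r)|\Bigr)^2\Bigr),
\end{equation*}
where the error is $\ll R^2(\log R)^{2\kappa}=N^{1-\delta}(\log N)^{2\kappa}$ and is trivially absorbed into the target bound. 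The crucial next step is to convert the inner double sum into a manifestly non-negative quadratic form via the identity $(r_1,r_2)=\sum_{d\mid(r_1,r_2)}\phi(d)$, which yields
\begin{equation*}
\sum_{r_1,r_2\le R}\frac{g(r_1)\overline{g(r_2)}}{[r_1,r_2]}=\sum_{d\text{ sqfree}}\phi(d)\,\frac{|g(d)|^2}{d^{2}}|T_d|^2,\qquad T_d:=\sum_{\substack{m\le R/d\\(m,d)=1,\,\text{sqfree}}}\frac{g(m)}{m}.
\end{equation*}
Because $g$ vanishes on primes $\le C$, only squarefree $d$ having every prime factor $>C$ contribute, which is exactly the hypothesis required by Theorem~\ref{GKvariant}.

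The heart of the proof is then the uniform upper bound $|T_d|\ll(\log(R/d)+2)^{|\Re\alpha-1|}$ plus a controllable secondary error. I would obtain it by applying Theorem~\ref{GKvariant} to the multiplicative function $g$, which satisfies $\sum_{p\le x}g(p)\log p=\alpha_g x+O(x/(\log x)^{A_1})$ and $|g(n)|\le d_{\kappa+1}(n)$ on squarefree $n$, where $\alpha_g=\pm(\alpha-1)$ with the sign chosen so that $\Re\alpha_g=|\Re\alpha-1|\ge 0$. Assuming $\alpha\ne 1$ (the degenerate case being handled separately), the standing hypotheses in \eqref{relationbetaA_1} guarantee $\alpha_g\notin-\mathbb{N}\cup\{0\}$, so Selberg--Delange applies; the coefficients in the asymptotic for $\sum_{m\le x,(m,d)=1}g(m)$ are bounded uniformly in $d$ by an argument parallel to Lemma~\ref{lemderivativeseulerproduct0}. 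A partial summation then produces the desired modulus bound for $T_d$, with a secondary error of the shape $(R/d)^{-1/4}\sum_{e\mid d}d_{\kappa+1}(e)/e^{3/4}$ inherited from Theorem~\ref{GKvariant}; for $R/d<2$ the trivial bound $|T_d|\ll 1$ suffices.

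Plugging this back and summing over $d$ gives, for the main contribution,
\begin{equation*}
(\log N)^{2|\Re\alpha-1|}\sum_{d\text{ sqfree}}\frac{|g(d)|^2}{d}\ll(\log N)^{2|\Re\alpha-1|}\prod_{p\le N}\Bigl(1+\frac{|g(p)|^2}{p}\Bigr)\ll(\log N)^{\beta+2|\Re\alpha-1|},
\end{equation*}
where the final step uses Mertens together with partial summation from \eqref{mainstatistic2}. The secondary-error contribution is $R^{-1/2}\sum_d|g(d)|^2/d\bigl(\sum_{e\mid d}d_{\kappa+1}(e)/e^{3/4}\bigr)^2\ll N^{-1/4+\delta/4}(\log N)^\beta$ by Lemma~\ref{lemmaparticularaverage}, and is absorbed. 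Multiplying through by $N$ yields the proposition. The main obstacle is the uniform estimate for $|T_d|$: the naive triangle inequality $|\tilde f(n)|\le \sum_{r\mid n}|g(r)|$ would force the exponent $\sqrt\beta$ via Cauchy--Schwarz on $\sum_p|g(p)|/p$, which is strictly worse than $|\Re\alpha-1|$ whenever $\alpha$ is genuinely complex. The required cancellation is recovered only by keeping $g(r_1)\overline{g(r_2)}$ together, which replaces $2|g(p)|$ by $2\Re g(p)$ in the effective Mertens exponent -- precisely the purpose of the identity used above.
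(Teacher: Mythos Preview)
Your approach is essentially the paper's: expand the square, convert via $(r_1,r_2)=\sum_{d\mid(r_1,r_2)}\phi(d)$ to $\sum_d\phi(d)|g(d)|^2d^{-2}|T_d|^2$, bound $T_d$ through Theorem~\ref{GKvariant} plus partial summation, and finish by summing $|g(d)|^2/d$ using \eqref{mainstatistic2}. One point needs correction. The coefficients $\lambda_j$ of Theorem~\ref{GKvariant} are \emph{not} uniformly bounded in $d$: they involve $(H_d^{-1})^{(h)}(1)$, and already $|H_d^{-1}(1)|=\prod_{p\mid d}|1+g(p)/p|^{-1}$ can grow with $\omega(d)$. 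Lemma~\ref{lemderivativeseulerproduct0} only controls the $d$-independent constants $c_l$; uniform control of the $\lambda_j$'s requires the extra hypotheses $(4)$ and $(6)$ (see Lemma~\ref{lemderivativeseulerproduct}), which are not available here since you must sum over all squarefree $d$. The paper therefore keeps the explicit bound
\[
\sum_{j\le J}|\lambda_j'|+|\tilde G_d^{(2\lfloor A_1\rfloor+2)}(1)|\ \ll\ \sum_{e\mid d}\frac{|g(e)|}{e}\bigl((\log e)^{2A_1+2}+1\bigr)+\sum_{e\mid d}\frac{d_{\kappa+1}(e)}{e^{3/4}},
\]
squares it inside the $d$-sum, and absorbs everything by the same divisor-swap argument underlying Lemma~\ref{lemmaparticularaverage} (the extra weights $|g(e)|(\log e)^{O(1)}/e$ contribute only a convergent Euler factor and do not alter the exponent $\beta$). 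Once this is inserted in place of your uniform-bound claim, the rest of your outline goes through verbatim and coincides with the paper's proof.
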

\begin{rmk}
It is crucial to have a \emph{sharp} upper bound for the sum in \eqref{finalestimateintg} to guarantee a sharp lower bound for the variance in arithmetic progressions. Indeed, \eqref{finalestimateintg} provides an upper bound for the integral in \eqref{trivparseval} which coincides with the denominator in \eqref{C-S}. Finding a sharp lower bound for the $L^2$--integral in \eqref{C-S} is a key step towards proving Theorem \ref{thmalpha}.
\end{rmk}
\begin{proof}
To begin with, we expand the square in \eqref{trivparseval} out and swap summations to find that the average square of $\tilde{f}$ is
\begin{equation}
\label{expandsquare}
\leq N\sum_{r,r'\leq R}\frac{g(r)\bar{g}(r')}{[r,r']}+O\bigg(\sum_{r\leq R}|g(r)|\bigg)^2.
\end{equation}
Regarding the error term in \eqref{expandsquare}, we notice the sum may be upper bounded by
\begin{equation}
\label{errorg}
R\sum_{r\leq R}\frac{|g(r)|}{r}\ll_\kappa R(\log R)^{\kappa+1}\leq N^{1/2-\delta/2}(\log N)^{\kappa+1},
\end{equation}
by an application of Lemma \ref{lemrankinestimate}. 

Using a manipulation which occurs in the work of Dress, Iwaniec and Tenenbaum (see e.g. \cite[equation 1]{DIT}),  we may rearrange the sum in the main term of \eqref{expandsquare} as
\begin{align}
\label{maintermg}
\sum_{r,r'\leq R}\frac{g(r)\bar{g}(r')}{rr'}\sum_{q|r,q|r'}\phi(q)&=\sum_{q\leq R}\phi(q)\bigg|\sum_{\substack{r\leq R\\ q|r}}\frac{g(r)}{r}\bigg|^2\\
&=\sum_{q\leq R}\frac{\phi(q)|g(q)|^2}{q^2}\bigg|\sum_{\substack{k\leq R/q\\ (q,k)=1}}\frac{g(k)}{k}\bigg|^2.\nonumber
\end{align}
We now need a careful estimate for the innermost sum in the second line of \eqref{maintermg}. We restrict first to the case $\Re(\alpha)\geq 1$. If $\alpha=1$, we define the  auxiliary multiplicative function $\tilde{g}$ such that
$$\tilde{g}(p^{j})=\left\{ \begin{array}{ll}
        g(p^{j}) & \mbox{if $p\nmid q$};\\
        g(p)^j & \mbox{otherwise}.\end{array} \right.$$ 
In this way the innermost sum above may be rewritten as
$$\sum_{\substack{k\leq R/q\\ (q,k)=1}}\frac{\tilde{g}(k)}{k} =\sum_{d|q}\frac{g(d)\mu(d)}{d}\sum_{l\leq R/dq}\frac{\tilde{g}(l)}{l},$$
since $q$ is squarefree and arguing as at the start of the proof of Theorem \ref{GKvariant}. We notice that since $\alpha=1$, we have
$$\sum_{p\leq x} \tilde{g}(p)\log p=\sum_{p\leq x} g(p)\log p\ll x/\log^{A_1}x\ \ \ (2\leq x\leq N).$$
By Theorem \ref{GKthm}, we deduce that
$$\sum_{n\leq x}\tilde{g}(l)\ll x(\log x)^{\kappa-A_1-1}\log\log x\ \ \ (2\leq x\leq N)$$
and by partial summation, remembering $A_1>\kappa+2$ from the hypothesis of Theorem \ref{thmalpha}, that
$$\sum_{l\leq R/dq}\frac{\tilde{g}(l)}{l}\ll 1,\ \textrm{for\ any}\ d|q\ \textrm{and}\ q\leq R,$$
thus concluding that
$$\sum_{q\leq R}\frac{\phi(q)|g(q)|^2}{q^2}\bigg|\sum_{\substack{k\leq R/q\\ (q,k)=1}}\frac{g(k)}{k}\bigg|^2\ll \sum_{q\leq R}\frac{|g(q)|^2}{q}\bigg(\sum_{d|q}\frac{|g(d)|}{d}\bigg)^2\ll (\log N)^{\beta},$$
for any $\beta>0$, with an implicit constant depending on $\kappa,A_1,A_2$ and that in \eqref{mainstatistic1}--\eqref{mainstatistic2}. The last estimate follows from Lemma \ref{lemmaparticularaverage}.

From now on we will work under the hypothesis $\alpha\neq 1$ and $\Re(\alpha)\geq 1$. We first note that \eqref{maintermg} is bounded by
\begin{align}
\label{maintermg2}
&\ll_\kappa\sum_{q\leq R}\frac{|g(q)|^2}{q}+\sum_{q\leq R/4}\frac{|g(q)|^2}{q}\bigg|\sum_{\substack{4\leq k\leq R/q\\ (q,k)=1}}\frac{g(k)}{k}\bigg|^2\\
&=O_\kappa((\log N)^{\beta})+\sum_{q\leq R/4}\frac{|g(q)|^2}{q}\bigg|\sum_{\substack{4\leq k\leq R/q\\ (q,k)=1}}\frac{g(k)}{k}\bigg|^2.\nonumber
\end{align}
By Theorem \ref{GKvariant}, we have
\begin{align}
\label{meanvaluegovercoprimalitycond}
\sum_{\substack{4\leq k\leq x\\(k,q)=1}}g(k)&=x(\log x)^{\alpha-2}\sum_{j=0}^{J}\frac{\lambda_j}{(\log x)^{j}}+O(|\tilde{G}_q^{(2\lfloor A_1\rfloor+2)}(1)|x(\log x)^{\kappa-A_1}(\log\log x)),\\
&+O\bigg(x^{3/4}\sum_{d|q}\frac{d_{\kappa+1}(d)}{d^{3/4}}\bigg),\nonumber
\end{align}
where 
$$\lambda_j=\lambda_j(g,\alpha,q)=\frac{1}{\Gamma(\alpha-1-j)}\sum_{l+h=j}\frac{(H_q^{-1})^{(h)}(1)c_l}{h!}=:\frac{\lambda_j'}{\Gamma(\alpha-1-j)},$$
with
$$H_q(z)=\prod_{p|q}\bigg(1+\frac{g(p)}{p^z}\bigg),\ \ \ c_l=\frac{1}{l!}\frac{d^l}{dz^l}\bigg(\zeta_N(z)^{-(\alpha-1)}G(z)\frac{((z-1)\zeta(z))^{\alpha-1}}{z}\bigg)_{z=1}$$
and 
$$G(z)=\sum_{\substack{n:\\ p|n\Rightarrow p\leq N}}\frac{g(n)}{n^z},\ \ \ \tilde{G}_q(z)=\sum_{d|q}\frac{|g(d)|}{d^z}=\prod_{p|q}\bigg(1+\frac{|g(p)|}{p^z}\bigg)$$
on $\Re(z)\geq 1$. Here each $c_l=c_l(g,\alpha)$ is uniformly bounded on $|\alpha|\leq \kappa$, thanks to an application of Lemma \ref{lemderivativeseulerproduct0} with $f$ replaced by $g$ here and $\alpha$ by $\alpha-1$.

Using partial summation, we get
\begin{equation}
\label{resultofpartialsumm}
\bigg|\sum_{\substack{4\leq k\leq R/q\\ (k,q)=1}}\frac{g(k)}{k}\bigg|\ll \frac{(\log (R/q))^{\Re(\alpha)-1}}{|\Gamma(\alpha)|}\bigg(\sum_{j=0}^{J}|\lambda_j'|+|\tilde{G}_q^{(2\lfloor A_1\rfloor+2)}(1)|+\sum_{d|q}\frac{d_{\kappa+1}(d)}{d^{3/4}}\bigg),
\end{equation}
where the implicit constant depends on $\kappa,A_1$ and the implicit constant in $\eqref{mainstatistic1}.$ Here we used that $\Gamma(\alpha)^{-1}$ is an entire function on the whole complex plane satisfying two main properties:
$$|\Gamma(\alpha)|\leq \Gamma(\Re(\alpha))\ \textrm{and}\ \Gamma(\alpha-l)=\frac{\Gamma(\alpha)}{(\alpha-l)\cdots(\alpha-1)},$$
for any $l\geq 1$ and $\alpha\in\mathbb{C}$ such that $\Re(\alpha)\geq 1$ and $|\alpha|\leq \kappa$. We can pretty easily deduce that $|\lambda_j'|\ll_{\kappa,j} \sum_{h=0}^{j}|(H_q^{-1})^{(h)}(1)|$. Likewise as in the proof of Theorem \ref{GKvariant}, we can write 
$${H}_q^{-1}(z)=\prod_{p|q}\bigg(1-\frac{g(p)}{p^z}\bigg)\prod_{p|q}\bigg(1+\frac{g(p)}{p^z}\bigg)^{-1}\bigg(1-\frac{g(p)}{p^z}\bigg)^{-1}:=\tilde{H}_q(z)\tilde{\tilde{H}}_q(z)$$
and show that we can bound all the derivatives of $\tilde{\tilde{H}}_q(z)$ with a constant independent of $q$. By linearity, all the derivatives of ${H}_q^{-1}$ will be a linear combination with complex coefficients of those of $\tilde{H}_q$, which are given by
$$(\tilde{H}_q)^{(h)}(1)=(-1)^{h}\sum_{d|q}\frac{\mu(d)g(d)}{d}(\log d)^{h}\ll \sum_{d|q}\frac{|g(d)|}{d}(\log d)^{h},$$
for any $0\leq h\leq J$. Hence 
\begin{align*}
\sum_{j=0}^{J}|\lambda_j'|\ll_{\kappa,J} \sum_{j=0}^{J}\sum_{h=0}^{j}\sum_{d|q}\frac{|g(d)|}{d}(\log d)^{h}&\ll_{\kappa,J}\sum_{j=0}^{J}\sum_{d|q}\frac{|g(d)|}{d}((\log d)^{j}+1)\\
&\ll_{\kappa,J}\sum_{d|q}\frac{|g(d)|}{d}((\log d)^{J}+1).
\end{align*}
Thus we deduce \eqref{resultofpartialsumm} is
$$\ll \frac{(\log (R/q))^{\Re(\alpha)-1}}{|\Gamma(\alpha)|}\bigg(\sum_{d|q}\frac{|g(d)|}{d}((\log d)^{2A_1+2}+1)+\sum_{d|q}\frac{d_{\kappa+1}(d)}{d^{3/4}}\bigg),$$
where the implicit constant depends on $\kappa,A_1$ and that one in $\eqref{mainstatistic1}.$
We conclude that \eqref{maintermg2} is
\begin{align*}
&\ll (\log N)^{\beta}+\frac{(\log N)^{2(\Re(\alpha)-1)}}{|\Gamma(\alpha)|^2}\sum_{q\leq R}\frac{|g(q)|^2}{q}\bigg(\sum_{d|q}\frac{|g(d)|}{d}((\log d)^{2A_1+2}+1)+\sum_{d|q}\frac{d_{\kappa+1}(d)}{d^{3/4}}\bigg)^2\\
&\ll \frac{(\log N)^{\beta+2(\Re(\alpha)-1)}}{|\Gamma(\alpha)|^2}\ll_\kappa (\log N)^{\beta+2(\Re(\alpha)-1)},
\end{align*}
by Lemma \ref{lemmaparticularaverage}, with an implicit constant depending on $\delta,\kappa,A_1,A_2$ and that in \eqref{mainstatistic1}--\eqref{mainstatistic2}. This concludes the proof when $\Re(\alpha)\geq 1$, since the error \eqref{expandsquare} will be negligible, thanks to \eqref{errorg}. 

When instead $\Re(\alpha)<1$, by definition of $g$ we now get from Theorem \ref{GKvariant}
\begin{align*}
\sum_{\substack{4\leq k\leq x\\(k,q)=1}}g(k)&=x(\log x)^{-\alpha}\sum_{j=0}^{J}\frac{\lambda_j}{(\log x)^{j}}+O(|\tilde{G}_q^{(2\lfloor A_1\rfloor+2)}(1)|x(\log x)^{\kappa-A_1}(\log\log x)),\\
&+O\bigg(x^{3/4}\sum_{d|q}\frac{d_{\kappa+1}(d)}{d^{3/4}}\bigg),
\end{align*}
where 
$$\lambda_j=\lambda_j(g,\alpha,q)=\frac{1}{\Gamma(1-\alpha-j)}\sum_{l+h=j}\frac{(H_q^{-1})^{(h)}(1)c_l}{h!}=:\frac{\lambda_j'}{\Gamma(1-\alpha-j)},$$
with
$$c_l=\frac{1}{l!}\frac{d^l}{dz^l}\bigg(\zeta_N(z)^{-(1-\alpha)}G(z)\frac{((z-1)\zeta(z))^{1-\alpha}}{z}\bigg)_{z=1}$$
and $G(z),\tilde{G}_q(z)$ and $H_q(z)$ defined as before. Again by partial summation we get
\begin{equation*}
\bigg|\sum_{\substack{4\leq k\leq R/q\\ (k,q)=1}}\frac{g(k)}{k}\bigg|\ll \frac{(\log (R/q))^{1-\Re(\alpha)}}{|\Gamma(2-\alpha)|}\bigg(\sum_{j=0}^{J}|\lambda_j'|+|\tilde{G}_q^{(2\lfloor A_1\rfloor+2)}(1)|+\sum_{d|q}\frac{d_{\kappa+1}(d)}{d^{3/4}}\bigg),
\end{equation*}
from which we can conclude as before, since all the other considerations and computations carry over exactly the same.
\end{proof}
Regarding the summation over $r$ in Lemma \ref{lemparseval-shiu} we are going to prove the following proposition.
\begin{prop}
\label{lemsumr}
Let $q$ be a positive integer with $KQ_0\leq q\leq N^{1/2-3\delta/4}$ satisfying condition $(4)$, i.e. any prime divisor of $q$ is larger than $C$, and $(6)$. Then we have
\begin{equation}
\label{estimatesumr1}
\Big| \sum_{\substack{r\le R \\ q|r}} \frac{g(r)}{r} \Big|\gg \frac{|g(q)|}{q}(\log N)^{|\Re(\alpha)-1|},
\end{equation}
where the implicit constant may depend on $\delta,\kappa,A_1,D$ and the implicit constant in \eqref{mainstatistic1}. Moreover, we are assuming $N$ and $C$ sufficiently large with respect to all of these parameters.
\end{prop}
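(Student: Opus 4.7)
The plan is to use the multiplicative structure of $g$ to factor out the contribution of $q$, then apply Theorem \ref{GKvariant} to the resulting sum of $g(k)/k$ over $k$ coprime to $q$. Since $g$ is multiplicative and supported on squarefree integers, if $q$ is not squarefree then both sides of the claimed inequality vanish, so we may assume $q$ is squarefree. In that case every $r$ with $q\mid r$ that contributes to the sum factors uniquely as $r=qk$ with $(k,q)=1$, giving $g(r)=g(q)g(k)$ and hence
\begin{equation*}
\sum_{\substack{r\le R \\ q\mid r}}\frac{g(r)}{r} = \frac{g(q)}{q}\sum_{\substack{k\le Y \\ (k,q)=1}}\frac{g(k)}{k},\qquad Y:=R/q\ge N^{\delta/4},
\end{equation*}
so it remains to show that the inner sum has absolute value $\gg (\log N)^{|\Re(\alpha)-1|}$.

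Assuming $\Re(\alpha)\ge 1$ (the case $\Re(\alpha)<1$ is symmetric, using the alternative definition $g(p)=1-f(p)$ and parameter $1-\alpha$), the function $g$ is a multiplicative function with $g(p)=f(p)-1$ on primes $p>C$, $g(p)=0$ on primes $p\le C$, and $|g(n)|\le d_{\kappa+1}(n)$ on squarefree $n$. From \eqref{mainstatistic1} applied to $f$ and the prime number theorem we deduce that $g$ satisfies \eqref{mainstatistic1} with parameter $\alpha-1$ and implicit constant depending on $C,\kappa$. Condition (4) on $q$ ensures the hypothesis $p\mid q\Rightarrow p>C>\kappa^2$ of Theorem \ref{GKvariant}, which then yields, with $X=Y$,
\begin{equation*}
\sum_{\substack{k\le Y\\(k,q)=1}}g(k)=Y(\log Y)^{\alpha-2}\sum_{j=0}^{J}\frac{\lambda_j(g,\alpha-1,q)}{(\log Y)^j}+E(Y,q),
\end{equation*}
where $\lambda_0(g,\alpha-1,q)=c_0(g,\alpha-1)\,H_q^{-1}(1)/\Gamma(\alpha-1)$ and $E(Y,q)$ is controlled by the two error terms of \eqref{eqGKvariant}. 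A partial summation then converts this into an asymptotic for $\sum g(k)/k$ whose leading term is
\begin{equation*}
\frac{c_0(g,\alpha-1)\,H_q^{-1}(1)}{\Gamma(\alpha)}\,(\log Y)^{\alpha-1}.
\end{equation*}

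To produce the claimed lower bound I then verify two things. First, that the leading term has the right order of magnitude: $\log Y\asymp_\delta \log N$ by the range of $q$, while a direct logarithmic expansion of the Euler products (analogous to the argument used in the proof of Lemma \ref{lemderivativeseulerproduct0}) shows $|c_0(g,\alpha-1)|\asymp_{C,\kappa}|c_0|$ since the two products differ only on primes $p\le C$, each contributing a bounded factor. Combined with $|\Gamma(\alpha)|\le \log N$ from \eqref{relationbetaA_1}, assumption \eqref{gammaassumpt} on $c_0$, and the lower bound $|H_q^{-1}(1)|=\prod_{p\mid q}|1+g(p)/p|^{-1}\gg 1$ obtained from conditions (4) and (6) (which restrict both the primes dividing $q$ and the sum $\sum_{p\mid q}(\log p)^{A_1+1}/p^{3/4}$), the leading term has absolute value $\gg (\log N)^{\Re(\alpha)-1}$.

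Second, that the error $E(Y,q)$ from \eqref{eqGKvariant} — which after partial summation contributes an additive error of order $|\tilde{G}_q^{(2\lfloor A_1\rfloor+2)}(1)|(\log Y)^{\kappa-A_1}\log\log Y$ plus the $q$-divisor remainder $Y^{-1/4}\sum_{d\mid q}d_\kappa(d)/d^{3/4}$ — is negligible. The bound on $|\tilde{G}_q^{(h)}(1)|$ follows from the explicit formula $|\tilde{G}_q^{(h)}(1)|=\sum_{d\mid q}|\tilde{f}(d)|(\log d)^h/d$ and an elementary estimate using condition (6), while the hypothesis $A_1>\max\{\kappa(\alpha,\beta),\kappa+2\}$ from \eqref{relationbetaA_1} makes the power of $\log Y$ sufficiently negative. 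The main obstacle is this bookkeeping step: verifying that the power-saving in $(\log N)^{\kappa-A_1}\log\log N$, after multiplication by a polynomial in $\log\log N$ coming from $|\tilde{G}_q^{(2\lfloor A_1\rfloor+2)}(1)|$, is still small enough to be beaten by the main term of size $(\log N)^{\Re(\alpha)-1}$, uniformly in the allowed range of $q$.
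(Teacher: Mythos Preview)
Your overall strategy---factor out $g(q)/q$ using the squarefree support of $g$, apply Theorem~\ref{GKvariant} to $g$ with parameter $\alpha-1$, then partial summation---is exactly the route the paper takes. However, there is a genuine gap: your argument does not go through when $\Re(\alpha)$ is close to $1$. In that regime the leading term $\tfrac{c_0(g,\alpha-1)H_q^{-1}(1)}{\Gamma(\alpha)}(\log Y)^{\alpha-1}$ has absolute value only $\asymp 1$, while partial summation also produces $O(1)$ contributions from the lower endpoint of the integral (equivalently, from the term $k=1$ and from the range where the asymptotic of Theorem~\ref{GKvariant} has not yet taken over). You have given no reason these cannot cancel. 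The paper handles this explicitly by a case split: writing $\alpha=1+L/\log\log N+i\tau$, it uses the partial summation bound only when $L$ exceeds a threshold $L_0$ chosen so that the leading term is at least $2$ and hence swallows the stray $+1$; when $0\le L\le L_0$ it instead writes $\sum_{k\le Y,(k,q)=1}g(k)/k$ as the complete Euler product $\prod_{p\le N,\,p\nmid q}(1+g(p)/p)\asymp 1$ minus the tail over $k>Y\ge N^{\delta/4}$, which is made small by taking $N$ large. Without this split (or an equivalent device) the lower bound does not follow uniformly in $\alpha$; note in particular that the theorem must cover $\alpha=\alpha_N\to 1$ as in Theorem~\ref{uniformalpha}.

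A secondary point: your bookkeeping for the size of the leading term invokes the wrong inputs. The inequality $|\Gamma(\alpha)|\le\log N$ would cost you an unnecessary factor of $\log N$; what is actually used is $|\Gamma(\alpha)|\le\Gamma(\Re(\alpha))\le\Gamma(\kappa)$, a bound independent of $N$ valid throughout $\Re(\alpha)\ge 1$. Similarly, the claim that $c_0(g,\alpha-1)$ and $c_0$ ``differ only on primes $p\le C$'' is not correct---the local factors differ at every prime---and the proposition makes no reference to $c_0$ or \eqref{gammaassumpt}. What one shows directly (as the paper does in \eqref{lowerboundlambda0}) is that $c_0(g,\alpha-1)\asymp 1$, by taking logarithms and applying partial summation from \eqref{mainstatistic1}.
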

Before starting with the proof we insert here a lemma which will be useful later.
\begin{lem}
\label{lemderivativeseulerproduct}
Let $g$ be a multiplicative function supported on the squarefree numbers and such that $|g(n)|\leq d_{\kappa+1}(n)$, for a certain real positive constant $\kappa>1$ and any $N$-smooth integer $n$. Assume moreover that $g(p)=0$, for any prime $p\leq C$, and define the following Euler products 
$$H_q(z)=\prod_{p|q}\bigg(1+\frac{g(p)}{p^z}\bigg),\ \tilde{G}_q(z)=\prod_{p|q}\bigg(1+\frac{|g(p)|}{p^z}\bigg)\ \ \ (\Re(z)\geq 1),$$
where $q$ is a squarefree positive integer smaller than $N$ satisfying conditions $(4)$ and $(6)$. Then for every positive integer $h$ we have 
$$\max\{|(H_q^{-1})^{(h)}(1)|,|\tilde{G}_q^{(h)}(1)|\}\ll_{h,\kappa,D}C^{-1/5},$$
if $C=C(\kappa,h)>\kappa+1$ is sufficiently large. Moreover, under our assumptions on $q$ we also have 
$$\max\{|H_q^{-1}(1)|,|\tilde{G}_q(1)|\}\asymp_{\kappa,D} 1.$$
\end{lem}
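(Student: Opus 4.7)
The strategy is to take logarithms to reduce everything to sums over the prime divisors of $q$, and then combine condition $(6)$ with the lower bound $p > C$ from condition $(4)$ to extract the saving $C^{-1/5}$.

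For the zeroth-order estimates, since $|g(p)|/p \le (\kappa+1)/C < 1/2$ for $C$ large, I would expand
$$\log \tilde{G}_q(1) = \sum_{p|q}\log\bigl(1+|g(p)|/p\bigr) = \sum_{p|q}\bigl(|g(p)|/p + O_\kappa(1/p^2)\bigr).$$
Condition $(6)$, together with $\log p \ge 1$ (valid once $C\ge e$) and $1/p \le 1/p^{3/4}$, gives $\sum_{p|q}1/p \le D$, so that $\log \tilde{G}_q(1) = O_{\kappa,D}(1)$; combined with $\tilde{G}_q(1) \ge 1$ this yields $\tilde{G}_q(1) \asymp_{\kappa,D} 1$. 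The analogous expansion for $\log|H_q(1)|$, using $|1+g(p)/p| \ge 1/2$, gives $|H_q^{-1}(1)| \asymp_{\kappa,D} 1$.

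For the derivative bounds, I would write $\tilde{G}_q(z) = \exp L(z)$ with $L(z) = \sum_{p|q}\log(1+|g(p)|/p^z)$, and apply Fa\`a di Bruno's formula
$$\tilde{G}_q^{(h)}(1) = \tilde{G}_q(1)\cdot B_h\bigl(L'(1),L''(1),\dots,L^{(h)}(1)\bigr),$$
where $B_h$ is the $h$-th complete Bell polynomial, every monomial of which has total weight $h$ and contains at least one factor. Term-by-term differentiation and the power-series expansion of $\log(1+u)$ in $u = |g(p)|/p^z$ (valid since $|u| < 1/2$) show that each summand contributing to $L^{(k)}(1)$ is $O_k\bigl((\log p)^k|g(p)|/p\bigr)$. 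The heart of the argument is then the bound
$$\sum_{p|q}\frac{(\log p)^k}{p} \ll_{k,A_1,D} C^{-1/5},$$
which I would prove by factoring
$$\frac{(\log p)^k}{p} = \frac{(\log p)^{A_1+1}}{p^{3/4}}\cdot\frac{(\log p)^{k-A_1-1}}{p^{1/4}}$$
(the second factor being simply $1/p^{1/4}$ when $k \le A_1+1$), using that $(\log p)^m/p^{1/20}$ is bounded on $[2,\infty)$ for any fixed $m \ge 0$ to get $(\log p)^{k-A_1-1}/p^{1/4} \ll_{k,A_1} p^{-1/5} \le C^{-1/5}$, and finally invoking condition $(6)$ on the remaining sum. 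Inserting this into Fa\`a di Bruno, each monomial in $B_h$ contributes $\ll C^{-m/5} \le C^{-1/5}$ for some $m \ge 1$, yielding $|\tilde{G}_q^{(h)}(1)| \ll_{h,\kappa,D} C^{-1/5}$. The bound for $|(H_q^{-1})^{(h)}(1)|$ follows identically by applying the same argument to $M(z) = \log H_q^{-1}(z) = -\sum_{p|q}\log(1+g(p)/p^z)$, the expansion being again justified by $|g(p)/p| < 1/2$.

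The main obstacle is the power-trick in the key estimate above: one must combine the two pieces of information available (the weight $(\log p)^{A_1+1}$ from condition $(6)$ and the lower bound $p > C$ from condition $(4)$) delicately, trading an arbitrary power $(\log p)^k$ for $(\log p)^{A_1+1}$ while paying only the saving $C^{-1/5}$. Everything else is either routine Euler-product bookkeeping for the zeroth-order bounds, or a standard application of Fa\`a di Bruno once the derivative sums $L^{(k)}(1)$ have been controlled.
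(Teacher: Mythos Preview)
Your proof is correct and follows essentially the same route as the paper: both arguments take logarithms, apply Fa\`a di Bruno to reduce to bounding the derivatives of $\log \tilde{G}_q$ (resp.\ $\log H_q^{-1}$) at $z=1$, observe that these are sums of the shape $\sum_{p\mid q}(\log p)^k/p$, and then split off a factor $p^{-1/5}\le C^{-1/5}$ from condition~(4) while controlling the remaining sum via condition~(6). The only cosmetic difference is that the paper computes the logarithmic derivative via the associated $\Lambda_{\tilde g}$-function and uses the split $p^{-1}=p^{-1/5}\cdot p^{-4/5}$, whereas you expand $\log(1+u)$ directly and use $p^{-1}=p^{-3/4}\cdot p^{-1/4}$; these are equivalent.
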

\begin{proof}
Let us focus on $\tilde{G}_q$, since similar computations also hold for $H_q^{-1}$. For values of $h\geq 1$ we use the Fa\`a di Bruno's formula \cite[p. 807, Theorem 2]{R} to find
$$\tilde{G}_q^{(h)}(1)=\tilde{G}_q(1)h!\sum_{m_1+2m_2+...+hm_h=h}\frac{\prod_{i=1}^{h}(\gamma_q^{(i-1)}(1))^{m_i}}{1!^{m_1}m_1!2!^{m_2}m_2!\cdots h!^{m_h}m_h! },$$
where 
$$\gamma_q(z)=\frac{\tilde{G}_q'}{\tilde{G}_q}(z)=-\sum_{n\geq 1}\frac{\Lambda_{\tilde{g}}(n)}{n^z}=-\sum_{p|q}\sum_{k=1}^{\infty}\frac{\Lambda_{\tilde{g}}(p^{k})}{p^{kz}},$$
if we indicate with $\tilde{g}(n)=|g(n)|\textbf{1}_{n|q}$ and define $\Lambda_{\tilde{g}}(n)$ exactly as the $n$-th coefficient in the Dirichlet series corresponding to minus the logarithmic derivative of $\tilde{G}_q(z)=\sum_{n}\frac{\tilde{g}(n)}{n^z}$.
Analysing the values of the $\Lambda_{\tilde{g}}$ function, we see that it is supported only on prime powers for primes dividing $q$. More precisely, on those powers we have the following relation 
$$\Lambda_{\tilde{g}}(p^k)=(-1)^{k+1}|g(p)|^k\log p,$$
which in turn follows from 
\begin{equation}
\label{fundamentalequationLambda}
\tilde{g}(n)\log(n)=\Lambda_{\tilde{g}}\ast \tilde{g}(n),\ \textrm{for any}\ n.
\end{equation}
The above also shows that $\Lambda_{\tilde{g}}(p^k)=0$ whenever $p\leq C$, by the support of $g$, and choosing $C=C(\kappa)>\kappa+1$ large enough makes the series over $k$ on $\Re(z)\geq 1$ convergent. We clearly obtain
$$\gamma_q^{(i)}(1)=-\sum_{p|q}\sum_{k=1}^{\infty}\frac{(-1)^{k+i+1}|g(p)|^{k}(k\log p)^{i+1}}{kp^{k}}\ll_{i,\kappa} \sum_{\substack{p|q\\ p>C}}\frac{(\log p)^{i+1}}{p}\leq \frac{1}{C^{1/5}}\sum_{p|q}\frac{(\log p)^{i+1}}{p^{4/5}},$$
since $|g(p)|$ is uniformly bounded by $\kappa+1>0$ and supported only on large primes. Remembering that $q\in\mathcal{A}$ by condition $(6)$, we immediately deduce that the last sum is bounded for $i\leq h-1$, implying that $\tilde{G}_q^{(h)}(1)\ll_{h,\kappa,D} \tilde{G}_q(1)/C^{1/5}$, for any $h\geq 1$. However, $\tilde{G}_q(1)$ is itself bounded, because 
$$\tilde{G}_q(1)=\exp\bigg(\sum_{p|q}\frac{|g(p)|}{p}+O(1)\bigg)= \exp(O_{\kappa,D}(1))\asymp_{\kappa,D} 1,$$ 
by condition $(6)$ and $C>\kappa+1$. Similarly, we can show the same for all the derivatives of $H_q^{-1}$, by first showing that the bound for those of its logarithmic derivative coincides with the bound for the derivative of $\gamma_q$. Indeed, since we have
$$\frac{d}{dz}\log(H_q^{-1}(z))=\sum_{p|q}\frac{g(p)\log p}{p^{z}+g(p)}=\sum_{p|q}g(p)\log p\sum_{k=0}^{\infty}\frac{(-g(p))^k}{p^{(k+1)z}},$$
where the series converges since $\Re(z)\geq 1$ and $p>C>\kappa+1$, its corresponding $j$-th derivative is
$$\sum_{p|q}g(p)(\log p)^{j+1}\sum_{k=0}^{\infty}\frac{(-g(p))^k(-k-1)^j}{p^{(k+1)z}},$$
from which by taking the absolute value we recover the analogous bound for $\gamma_q^{(j)}$. Finally, note that since $g$ vanishes on the primes smaller than a large constant $C$ the product $H_q(1)$ is not zero. Moreover, we note that
$$H_q^{-1}(1)=\exp\bigg(-\sum_{p|q}\frac{g(p)}{p}+O(1)\bigg)=\exp(O_{\kappa,D}(1))\asymp_{\kappa,D} 1,$$
again by condition $(6)$ on $q$ and $C>\kappa+1$.
\end{proof}
\begin{proof}[Proof of Proposition \ref{lemsumr}]
First of all, note that
\begin{equation}
\label{gcoprimalitycond}
\sum_{\substack{r\le R \\ q|r}} \frac{g(r)}{r}=\frac{g(q)}{q}\sum_{\substack{1\leq r\le R/q \\ (q,r)=1}} \frac{g(r)}{r}=\frac{g(q)}{q}\bigg(1+\sum_{\substack{C\leq r\le R/q \\ (q,r)=1}} \frac{g(r)}{r}\bigg),
\end{equation}
since $g$ is supported on squarefree numbers larger than $C$, and on $1$ where $g(1)=1$.

In order to evaluate the last sum on the right hand side of \eqref{gcoprimalitycond} we apply again Theorem \ref{GKvariant}, as was done in Proposition \ref{propinttildeF}, and conclude with a partial summation argument. In this case our task is facilitated by restricting $q$ to lie in the subset $\mathcal{A}\subset [KQ_0, RN^{-\delta/4}]$, as in condition $(6)$. In particular, since $q\leq RN^{-\delta/4}$ we notice that $\log(R/q)\asymp_{\delta} \log N$ and the condition $q\in\mathcal{A}$ allows us to simplify the asymptotic expansion of the average of $g(n)$. However, since here we are looking for a lower bound, some difficulties arise when $\Re(\alpha)$ is near $1$, for which we will need to invoke condition $(4)$ on $q$ and divide the argument into two different cases according to the size of $|\Re(\alpha)-1|$. 

We first restrict our attention to the case $\Re(\alpha)\geq 1$ in which case we can compute the average of $g$ over the coprimality condition using Theorem \ref{GKvariant}. Assuming $C>4$ sufficiently large, we obtain
$$\sum_{\substack{C\leq k\leq x\\(k,q)=1}}g(k)= \sum_{j=0}^{J}\frac{\lambda_j'}{\Gamma(\alpha-1-j)}(\log x)^{\alpha-2-j}+O\bigg(x(\log x)^{\kappa-A_1}(\log\log x)\bigg),$$
where the big-Oh term depends on $\kappa,A_1,D$ and the implicit constant in \eqref{mainstatistic1} and the $\lambda_j'$ are as in \eqref{meanvaluegovercoprimalitycond}. Here we simplified the expression in the error term by using Lemma \ref{lemderivativeseulerproduct} and noticing that
$$\sum_{d|q}\frac{d_{\kappa}(d)}{d^{3/4}}=\prod_{p|q}\bigg(1+\frac{\kappa}{p^{3/4}}\bigg)=\exp\bigg(\sum_{p|q}\frac{\kappa}{p^{3/4}}+O_{\kappa}(1)\bigg)\ll_{\kappa,D}1,$$
by the conditions $(4)$ and $(6)$ on $q$, if $C=C(\kappa,A_1)>\kappa^{4/3}.$ Moreover, note that
\begin{equation}
\label{lowerboundlambda0}
\lambda_0'=\prod_{p|q}\bigg(1+\frac{g(p)}{p}\bigg)^{-1}\prod_{p\leq N}\bigg(1+\frac{g(p)}{p}\bigg)\bigg(1-\frac{1}{p}\bigg)^{\alpha-1}\asymp 1,
\end{equation}
with an implicit constant depending on $\kappa,A_1,C,D$ and that in \eqref{mainstatistic1}, since $g(p)=0$ when $p\leq C$ with $C>\kappa+1$, and using Lemma \ref{lemderivativeseulerproduct} and partial summation from \eqref{mainstatistic1}, as at the start of the proof of Lemma \ref{lemderivativeseulerproduct0}.
By partial summation and similar considerations to those employed in the proof of Proposition \ref{propinttildeF}, remembering the hypothesis $\eqref{relationbetaA_1}$ on $A_1$, we deduce that
\begin{equation}
\label{lastapprox}
\bigg|\sum_{\substack{C\leq r\leq R/q\\ (q,r)=1}}\frac{g(r)}{r}\bigg|\geq E\bigg|\frac{\lambda_0'}{\Gamma(\alpha)}\bigg|(\log(R/q))^{\Re(\alpha)-1},
\end{equation}
where $E$ depends on $\delta,\kappa,A_1,D$ and the implicit constant in \eqref{mainstatistic1}, if we think of $C$ as large enough in terms of $\delta,\kappa,A_1,D$ and take $N$ sufficiently large to all of these parameters. 

Now, write $\alpha=1+L/\log\log N+i\tau$, with $L\geq 0$, and suppose that $L>L_0$, where
$$L_0:=\min\bigg\{l\in\mathbb{R}^{+}: e^{l-1}\geq \frac{2}{E}\bigg|\frac{\Gamma(\kappa)}{\lambda_0'}\bigg|\bigg\}$$
depends on $\delta,\kappa,A_1,D$ and the implicit constant in \eqref{mainstatistic1}.
Then \eqref{lastapprox} is clearly
$$= E \bigg|\frac{\lambda_0'}{\Gamma(\alpha)}\bigg|e^{L+O(|\log \delta|/\log\log N)}\geq 2,$$
if we take $N$ large enough. This, together with \eqref{lowerboundlambda0}, concludes the proof in this subcase.

Suppose now $0\leq L\leq L_0$. We remark that when $\tau$ is either $0$ or a possibly small function of $N$ and $\Re(\alpha)$ is suitably close to $1$, the above partial summation argument could lose its efficiency. For this reason, a direct argument is needed, one in which only the value of the $\Re(\alpha)$ counts. Hence, we start again from \eqref{gcoprimalitycond} and note that
\begin{equation}
\label{casealpha1}
\sum_{\substack{1\leq r\le R/q \\ (q,r)=1}} \frac{g(r)}{r}=\sum_{\substack{(q,r)=1}} \frac{g(r)}{r}-\sum_{\substack{r>R/q\\ (q,r)=1}} \frac{g(r)}{r}
\end{equation}
where the complete series in \eqref{casealpha1} converges, since it is equal to
$$\prod_{\substack{p\leq N\\p\nmid q}}\bigg(1+\frac{g(p)}{p}\bigg)=H_q^{-1}(1)\prod_{p\leq N}\bigg(1+\frac{g(p)}{p}\bigg)\asymp 1,$$
with an implicit constant depending on $\kappa,A_1,D$ and that in \eqref{mainstatistic1}. Indeed, Lemma \ref{lemderivativeseulerproduct} gives 
$$H_q^{-1}(1)\asymp_{\kappa,D}1$$
and we have
\begin{align*}
\bigg|\prod_{p\leq N}\bigg(1+\frac{g(p)}{p}\bigg)\bigg|=\exp\bigg(\Re\bigg(\sum_{C<p\leq N}\bigg(\frac{g(p)}{p}+O_\kappa\bigg(\frac{1}{p^2}\bigg)\bigg)\bigg)\bigg)\asymp_\kappa \exp\bigg(\sum_{C<p\leq N}\frac{\Re(g(p))}{p}\bigg)\asymp 1,
\end{align*}
with an implicit constant depending on $\delta,\kappa,A_1,D$ and that in \eqref{mainstatistic1}, if $C,N$ sufficiently large in terms of those parameters. The last estimate follows through partial summation from \eqref{mainstatistic1}.

Finally, since $R/q\geq N^{\delta/4}$, the tail of the series in \eqref{casealpha1} can be made arbitrary small if we choose $N$ large enough. Therefore, we simply have  
$$\sum_{\substack{1\leq r\leq R\\ q|r}} \frac{g(r)}{r}\gg \frac{|g(q)|}{q},$$
with an implicit constant depending on $\delta,\kappa,A_1,D$ and that in \eqref{mainstatistic1}, if $C,N$ large enough in terms of those parameters, which matches the expression in \eqref{estimatesumr1}, since 
$$|\Gamma(1+L/\log\log N+i\tau)|\leq \Gamma(1+L/\log\log N)\ll 1,$$
choosing $N$ sufficiently large, by the continuity of $\Gamma(\alpha)$. 

This concludes the proof in the case $\Re(\alpha)\geq 1$.

In the complementary case, i.e. $\Re(\alpha)<1$, we just note that $g$ has average $1-\alpha\neq 0$ over the primes. All the above computations then carry over, with the opportune modifications already explained at the end of the proof of Proposition \ref{propinttildeF}, and the overall  result may be written as in \eqref{estimatesumr1}. 
\end{proof}

\section{Twisting with Ramanujan's sums}
By inserting the conclusion of Proposition \ref{propupperboundfc_d} and estimate \eqref{averagefsquare} in Proposition \ref{Prop5.1}, so far we have found 
$$V(Q,f) \ge Q\Big(1+ O\Big(\frac{\log K}{K}\Big)\Big) \int_{\frak m} |{\mathcal F}(\varphi)|^2d\varphi+O_{\kappa,K}\bigg(\frac{N^2(\log N)^{\kappa^2+4\kappa+2}}{Q_0}+\frac{N^2(\log N)^{\beta+2\Re(\alpha)-2}}{Q_0}\bigg),$$
where $\int_{\frak m} |{\mathcal F}(\varphi)|^2d\varphi$ may be lower bounded using the results of Lemma \ref{lemparseval-shiu}, Proposition \ref{propinttildeF} and Proposition \ref{lemsumr}, with $K$ a large constant. Hence, we have proved that 
\begin{align}
\label{startingpoint1}
V(Q,f)&\gg \frac{Q}{N(\log N)^{\beta}}\bigg(\sum_{KQ_0\leq q\leq RN^{-\delta/4}}^{'}\frac{|g(q)|}{q}\Big| \sum_{n\le N} 
f(n) c_q(n)\Big|+O_{\delta,\kappa}(N^{1-\delta/11})\bigg)^{2}\\
&+O_\kappa\bigg(\frac{N^2(\log N)^{\kappa^2+4\kappa+2}}{Q_0}+\frac{N^2(\log N)^{\beta+2\Re(\alpha)-2}}{Q_0}\bigg),\nonumber
\end{align}
where $\Sigma^{'}$ indicates a sum over all the squarefree $KQ_0\leq q\leq RN^{-\delta/4}$ under the restrictions $(4)$ and $(6)$ and the $\gg$ constant may depend on $\delta,\kappa,A_1,A_2,D$ and the implicit constants in \eqref{mainstatistic1}--\eqref{mainstatistic2}. Moreover, we are assuming $N$ sufficiently large with respect to all of them as well as to $C$. 

In the rest of this section we explain how to deal with the average of $f(n)$ twisted with Ramanujan's sums, which is indeed the heart of the proof. We begin with the following observation
\begin{equation}
\label{firstmanipulation}
\sum_{n\leq N}f(n)c_q(n)=\sum_{\substack{b\leq N\\ p|b\Rightarrow p|q}}f(b)c_q(b)\sum_{\substack{a\leq N/b\\ (a,q)=1}}f(a),
\end{equation}
using the substitution $n=ab$, with $(a,q)=1$ and $b=n/a$, which is unique, and noticing that
$$c_q(n)=\frac{\mu(q/(n,q))\phi(q)}{\phi(q/(n,q))}=\frac{\mu(q/(b,q))\phi(q)}{\phi(q/(b,q))}=c_q(b),$$
which can be deduced from \eqref{mainpropc_q}. For any $b\leq N/4$ we can apply Theorem \ref{GKvariant} to find
\begin{equation}
\label{SDasymptotic}
\sum_{\substack{a\leq N/b\\ (a,q)=1}}f(a)=\frac{N}{b}(\log(N/b))^{\alpha-1}\sum_{j=0}^{J}\frac{\lambda_j}{(\log(N/b))^{j}}+O\left(\frac{N}{b}(\log(N/b))^{\kappa-1-A_1}(\log\log N)\right),
\end{equation}
where we simplified the expression in the error term by using Lemma \ref{lemderivativeseulerproduct} and noticing that
$$\sum_{d|q}\frac{d_{\kappa}(d)}{d^{3/4}}=\prod_{p|q}\bigg(1+\frac{\kappa}{p^{3/4}}\bigg)=\exp\bigg(\sum_{p|q}\frac{\kappa}{p^{3/4}}+O_{\kappa}(1)\bigg)\ll_{\kappa,D}1,$$
by the conditions $(4)$ and $(6)$ on $q$, if $C=C(\kappa,A_1)>\kappa^{4/3}.$

Here $J$ is the largest integer $<A_1$ and $\lambda_j$ as in the statement of Theorem \ref{GKvariant}.
Since the asymptotic holds only when $N/b\geq 4$ we need to estimate
\begin{equation}
\label{eq11}
\sum_{\substack{N/4<b\leq N\\ p|b\Rightarrow p|q}}d_\kappa(b)|c_q(b)|\leq N^{3/4}\sum_{\substack{p|b\Rightarrow p|q}}\frac{d_\kappa(b)(q,b)}{b^{3/4}}.
\end{equation}
In future we will make use several times of the following lemma.
\begin{lem}
\label{lemmatail}
For all positive integers $q\in\mathcal{A}$, as in condition $(6)$, we have
$$\sum_{\substack{p|b\Rightarrow p|q}}\frac{d_\kappa(b)(q,b)}{b^{3/4}}\ll_{\kappa,D} q^{1/4}d_{\kappa+1}(q),\ \ \sum_{\substack{p|b\Rightarrow p|q}}\frac{d_\kappa(b)(q,b)}{b}\ll_{\kappa,D} d_{\kappa+1}(q).$$
\end{lem}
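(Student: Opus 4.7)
The plan is to exploit the multiplicativity in $b$ of each summand (the radical condition $p|b \Rightarrow p|q$ restricts $b$ to the multiplicative monoid generated by the prime divisors of $q$) and factor each sum as an Euler product over $p|q$, then bound the local factor at each prime by a Taylor expansion of $(1-x)^{-\kappa}$. I will focus on squarefree $q$, which is the case of interest in the applications; the general case follows the same route with the obvious modification $(q, p^k) = p^{\min(k, v_p(q))}$ at each prime.

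For the first sum, the local factor at a prime $p|q$ is
$$L_p \;=\; 1 + p\sum_{k\ge1}\frac{d_\kappa(p^k)}{p^{3k/4}} \;=\; 1 + p\bigl((1-p^{-3/4})^{-\kappa}-1\bigr) \;=\; 1 + \kappa p^{1/4} + O_\kappa(p^{-1/2}).$$
Hence for all primes $p$ exceeding a threshold $p_0(\kappa)$ one has $L_p \le (\kappa+1)p^{1/4}$, while for the finitely many primes $p\le p_0(\kappa)$ the factor $L_p$ is uniformly bounded in terms of $\kappa$. Multiplying over $p|q$ and using that $d_{\kappa+1}(q)=(\kappa+1)^{\omega(q)}$ for squarefree $q$ yields the first inequality, with implicit constant depending only on $\kappa$:
$$\prod_{p|q}L_p \;\ll_{\kappa}\; (\kappa+1)^{\omega(q)} q^{1/4} \;=\; d_{\kappa+1}(q)\,q^{1/4}.$$

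For the second sum, the analogous local factor is $M_p = 1 + p\bigl((1-p^{-1})^{-\kappa}-1\bigr) = (\kappa+1) + O_\kappa(p^{-1})$, and the same large-primes-versus-small-primes split gives
$$\prod_{p|q} M_p \;\ll_\kappa\; (\kappa+1)^{\omega(q)} \exp\!\Bigl(O_\kappa\Bigl(\sum_{p|q}\tfrac1p\Bigr)\Bigr).$$
The one place where the hypothesis $q\in\mathcal A$ is genuinely needed is to bound $\sum_{p|q}1/p$: since $(\log p)^{A_1+1}/p^{3/4}\ge 1/p$ for every prime $p\ge3$, condition $(6)$ forces $\sum_{p|q}1/p \le D + \tfrac12$, so the exponential is $O_{\kappa,D}(1)$ and the second estimate $\prod_{p|q}M_p \ll_{\kappa,D} d_{\kappa+1}(q)$ follows. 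The only potential obstacle in the argument is the bookkeeping for small primes, but since $p_0(\kappa)$ depends only on $\kappa$ there are only finitely many such primes, each contributing a bounded local factor, so their combined contribution is absorbed into an implicit constant depending on $\kappa$ alone. This is the multiplicative-function analogue of the Rankin-type estimate underlying Lemma \ref{lemrankinestimate}.
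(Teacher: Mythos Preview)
Your proof is correct and follows essentially the same Euler-product route as the paper: the paper first bounds $(q,b)\le\sum_{e|q,\,e|b}e$ to decouple the gcd before factoring, arriving at $\bigl(\sum_{e|q}e^{1/4}d_\kappa(e)\bigr)\prod_{p|q}(1-p^{-3/4})^{-\kappa}$, whereas you compute the exact local factor $L_p=1+p\bigl((1-p^{-3/4})^{-\kappa}-1\bigr)$ and compare it directly to $(\kappa+1)p^{1/4}$. A small bonus of your more direct computation is that the first inequality in fact holds with implied constant depending only on~$\kappa$ (condition~$(6)$ is genuinely needed only for the second sum, exactly as you observe), and your restriction to squarefree $q$ is harmless since condition~$(1)$ guarantees this in every application.
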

\begin{proof}
The first sum in the statement is upper bounded by
\begin{equation}
\label{lemmatail1}
\sum_{e|q}e\sum_{\substack{e|b\\ p|b\Rightarrow p|q}}\frac{d_{\kappa}(b)}{b^{3/4}}.
\end{equation}
Since $\kappa>1$, it is
$$\leq \sum_{e|q}e^{1/4}d_{\kappa}(e)\sum_{\substack{f\\ p|f\Rightarrow p|q}}\frac{d_{\kappa}(f)}{f^{3/4}}\leq q^{1/4}d_{\kappa+1}(q)\prod_{p|q}\bigg(1-\frac{1}{p^{3/4}}\bigg)^{-\kappa}\ll_{\kappa,D} q^{1/4}d_{\kappa+1}(q).$$
Similarly the second sum in the statement is bounded by
\begin{equation}
\label{lemmatail2}
\sum_{e|q}e\sum_{\substack{e|b\\ p|b\Rightarrow p|q}}\frac{d_{\kappa}(b)}{b},
\end{equation}
which is 
\begin{equation*}
\leq \sum_{e|q}d_{\kappa}(e)\sum_{\substack{f\\ p|f\Rightarrow p|q}}\frac{d_{\kappa}(f)}{f}\leq d_{\kappa+1}(q)\prod_{p|q}\bigg(1-\frac{1}{p}\bigg)^{-\kappa}\ll_{\kappa,D} d_{\kappa+1}(q).\qedhere
\end{equation*}
\end{proof}
Therefore, we conclude that \eqref{eq11} is
\begin{equation*}
\ll N^{3/4}q^{1/4}d_{\kappa+1}(q)\ll_{\kappa,D} N^{7/8-3\delta/16} d_{\kappa+1}(q).
\end{equation*}
Plugging \eqref{SDasymptotic} into \eqref{firstmanipulation} we get
\begin{align}
\label{eq13}
\sum_{n\leq N}f(n)c_q(n)&=N\sum_{\substack{b\leq N/4\\ p|b\Rightarrow p|q}}\frac{f(b)c_q(b)}{b}(\log(N/b))^{\alpha-1}\bigg(\sum_{j=0}^{J}\frac{\lambda_j}{(\log(N/b))^{j}}\bigg)\\
&+O\bigg(N\sum_{\substack{b\leq N/4\\ p|b\Rightarrow p|q}}\frac{d_{\kappa}(b)|c_q(b)|}{b}(\log (N/b))^{\kappa-1-A_1}(\log\log N)\bigg)\nonumber\\
&+O(N^{7/8-3\delta/16} d_{\kappa+1}(q)).\nonumber
\end{align}
To estimate the sum in the error term here we use same considerations employed in the case of \eqref{eq11}. Since in the hypothesis of Theorem \ref{thmalpha} we assumed $A_1>\kappa+2$, the function $(\log(N/b))^{\kappa-1-A_1}$ is an increasing function of $b$. Therefore, by an application of Lemma \ref{lemmatail} we immediately deduce that the sum in the error term corresponding to values of $b\leq \sqrt{N}$ is
$$\ll (\log N)^{\kappa-1-A_1}(\log\log N)\sum_{\substack{p|b\Rightarrow p|q}}\frac{d_{\kappa}(b)(q,b)}{b}\ll_{\kappa,D} (\log N)^{\kappa-1-A_1}(\log\log N) d_{\kappa+1}(q).$$
On the other hand, the one corresponding to $b>\sqrt{N}$ is simply
$$\ll N^{-1/8}(\log\log N)\sum_{\substack{p|b\Rightarrow p|q}}\frac{d_{\kappa}(b)(q,b)}{b^{3/4}}\ll_{\kappa,D} N^{-3\delta/16}(\log\log N)d_{\kappa+1}(q),$$
again by an application of Lemma \ref{lemmatail}. We conclude that
\begin{align}
\label{firtsestimatef(n)c_q(n)}
\sum_{n\leq N}f(n)c_q(n)&= N\sum_{\substack{b\leq N/4\\ p|b\Rightarrow p|q}}\frac{f(b)c_q(b)}{b}(\log(N/b))^{\alpha-1}\bigg(\sum_{j=0}^{J}\frac{\lambda_j}{(\log(N/b))^{j}}\bigg)\\
&+O\bigg(N(\log N)^{\kappa-1-A_1}(\log\log N)d_{\kappa+1}(q)\bigg),\nonumber
\end{align}
where the constant in the big-Oh term may depend on $\delta,\kappa,A_1, D$ and the implicit constant in \eqref{mainstatistic1} and we take $N$ large enough with respect to these parameters.

The principal aim from now on is to evaluate the following family of sums
\begin{equation}
\label{mainsum}
\sum_{\substack{b\leq N/4\\ p|b\Rightarrow p|q}}\frac{f(b)c_q(b)}{b}\log^{\tilde{\alpha}}(N/b),
\end{equation}
with $\tilde{\alpha}\in\{\alpha-1,\alpha-2,\dots,\alpha-J-1\}.$
In order to do that, we employ condition $(3.a)$ and write $q=rs$, with $p|r\Rightarrow p>(\log N)^{B}$ and $p|s\Rightarrow p\leq (\log N)^{B}$, for a large constant $B>0$ to be chosen later. In view of this factorization we have the following identity:
\begin{align}
\label{multofc_q}
\sum_{\substack{b\leq N/4\\ p|b\Rightarrow p|q}}\frac{f(b)c_q(b)}{b}\log^{\tilde{\alpha}}(N/b)&=\sum_{\substack{b_1\leq \sqrt{N}\\ p|b_1\Rightarrow p|r}}\frac{f(b_1)c_r(b_1)}{b_1}\sum_{\substack{b_2\leq N/4b_1\\ p|b_2\Rightarrow p|s}}\frac{f(b_2)c_s(b_2)}{b_2}\log^{\tilde{\alpha}}(N/b_1 b_2)\\
&+\sum_{\substack{b_2\leq \sqrt{N}\\ p|b_2\Rightarrow p|s}}\frac{f(b_2)c_s(b_2)}{b_2}\sum_{\substack{N^{1/2}<b_1\leq N/4b_2\\ p|b_1\Rightarrow p|r}}\frac{f(b_1)c_r(b_1)}{b_1}\log^{\tilde{\alpha}}(N/b_1 b_2),\nonumber
\end{align}
since by multiplicativity of $c_q(n)$ as function of $q$ and definition of $r,s$ we have 
$$c_q(b)=c_r(b)c_s(b)=c_r(b_1)c_s(b_2).$$ 
We inserted the above structural information on $q$ to reduce the estimate of \eqref{mainsum} to that of a sum over smooth integers, which is easier to handle, and one over integers divisible only by large primes which will turn out to be basically over squarefree integers, notably simplifying its computation. Let us focus our attention on the second double sum on the right hand side of \eqref{multofc_q}. By Lemma \ref{lemmatail}, the innermost sum there is
\begin{align}
\ll \frac{(\log N)^{\max\{\Re(\alpha)-1,0\}}}{N^{1/8}}\sum_{\substack{p|b_1\Rightarrow p|r}}\frac{d_{\kappa}(b_1)|c_r(b_1)|}{b_1^{3/4}}&\ll_{\kappa,D} \frac{(\log N)^{\max\{\Re(\alpha)-1,0\}}}{N^{1/8}}r^{1/4}d_{\kappa+1}(r)\\
&\ll \frac{(\log N)^{\max\{\Re(\alpha)-1,0\}}}{N^{3\delta/16}}d_{\kappa+1}(r).\nonumber
\end{align}
Since this bound is independent of $b_2$ we only need to consider
$$\sum_{\substack{b_2\leq \sqrt{N}\\ p|b_2\Rightarrow p|s}}\frac{|f(b_2)c_s(b_2)|}{b_2}\leq \sum_{\substack{p|b_2\Rightarrow p|s}}\frac{d_{\kappa}(b_2)|c_s(b_2)|}{b_2}\ll_{\kappa,D} d_{\kappa+1}(s),$$
again by Lemma \ref{lemmatail}. In conclusion, the contribution from the second double sum in \eqref{multofc_q} is
\begin{equation}
\label{eq14}
\ll_{\kappa,D}\frac{d_{\kappa+1}(q)(\log N)^{\max\{\Re(\alpha)-1,0\}}}{N^{3\delta/16}}.
\end{equation}
\section{The contribution from small prime factors}
We are left with the estimate of the first double sum in \eqref{multofc_q}. For brevity, let us write $M=N/b_1$. We need to consider first 
\begin{align*}
\sum_{\substack{b_2\leq M/4\\ p|b_2\Rightarrow p|s}}\frac{f(b_2)c_s(b_2)}{b_2}\log^{\tilde{\alpha}}(M/b_2)&=\sum_{k=0}^{K}\binom{-\tilde{\alpha}+k-1}{k}(\log M)^{\tilde{\alpha}-k}\sum_{\substack{b_2\leq M/4\\ p|b_2\Rightarrow p|s}}\frac{f(b_2)c_s(b_2)}{b_2}\log^{k} b_2\\&+O_{\kappa,K}\bigg((\log M)^{\Re(\tilde{\alpha})-K-1}\sum_{\substack{b_2\leq M/4\\ p|b_2\Rightarrow p|s}}\frac{|f(b_2)c_s(b_2)|}{b_2}(\log b_2)^{K+1}\bigg),
\end{align*}
for a constant $K$ that will be chosen in terms of $A_1$ later on. Let us move on now to estimate the sums 
$$\sum_{b\leq M/4,\ p|b\Rightarrow p|s}\frac{f(b)c_s(b)}{b}\log^{k} b,\ \ \ (\forall\ 0\leq k\leq K).$$
First, we observe that we can remove the condition $b\leq M/4$, because using the monotonicity of $b^{1/4}/(\log b)^{k}$, for fixed $k$ and $b$ large, and applying Lemma \ref{lemmatail}, we may deduce that
$$\sum_{\substack{b> M/4\\ p|b\Rightarrow p|s}}\frac{f(b)c_s(b)}{b}\log^{k} b\ll_k \frac{\log^{k}M}{M^{1/4}}\sum_{\substack{p|b\Rightarrow p|s}}\frac{d_{\kappa}(b)|c_s(b)|}{b^{3/4}}\ll_{k,\kappa,D} \frac{\log^{k}M}{M^{1/4}}s^{1/4}d_{\kappa+1}(s).$$
Therefore, the error in replacing the finite sums above with the complete series is
\begin{equation*}
\ll_{k,\kappa,D} \frac{\log^{\Re(\tilde{\alpha})}M}{M^{1/4}}s^{1/4}d_{\kappa+1}(s)\ll_{\delta,\kappa,A_1} \frac{\log^{\Re(\tilde{\alpha})}N}{N^{3\delta/16}}d_{\kappa+1}(s),
\end{equation*}
for any $\tilde{\alpha}$, using that $M\geq N^{1/2}$ and $s\leq q\leq N^{1/2-3\delta/4}$. We have obtained so far
\begin{align}
\label{asymptoticsumb_2}
\sum_{\substack{b_2\leq M/4\\ p|b_2\Rightarrow p|s}}\frac{f(b_2)c_s(b_2)}{b_2}\log^{\tilde{\alpha}}(M/b_2)&=\sum_{k=0}^{K}\binom{-\tilde{\alpha}+k-1}{k}(\log M)^{\tilde{\alpha}-k}\sum_{\substack{p|b_2\Rightarrow p|s}}\frac{f(b_2)c_s(b_2)}{b_2}\log^{k} b_2\\
&+O_{\kappa,K}\bigg((\log M)^{\Re(\tilde{\alpha})-K-1}\sum_{\substack{p|b_2\Rightarrow p|s}}\frac{d_{\kappa}(b_2)(b_2,s)}{b_2}(\log b_2)^{K+1}\bigg)\nonumber\\
&+O_{\delta,\kappa,A_1,D,K}\bigg(\frac{\log^{\Re(\tilde{\alpha})}N}{N^{3\delta/16}}d_{\kappa+1}(s)\bigg).\nonumber
\end{align}
Let us define the following Dirichlet series 
\begin{align*}
\Theta(\sigma):=\sum_{\substack{p|b\Rightarrow p|s}}\frac{f(b)c_s(b)}{b^{\sigma}},\ \ \tilde{\Theta}(\sigma):=\sum_{\substack{p|b\Rightarrow p|s}}\frac{d_{\kappa}(b)(b,s)}{b^{\sigma}},\ \ \ (\sigma\geq 1).
\end{align*} 
In order to find a better and manageable form for them, we will prove the following lemma.
\begin{lem}
\label{lemdirichletseries}
For squarefree values of $s$, we have
\begin{align*}
\Theta(\sigma)&=\prod_{p|s}\bigg(-p+(p-1)\sum_{\nu\geq 0}\frac{f(p^{\nu})}{p^{\nu \sigma}}\bigg),\\
\tilde{\Theta}(\sigma)&=\prod_{p|s}\left(1-p+p(1-1/p^{\sigma})^{-\kappa}\right).
\end{align*}
\end{lem}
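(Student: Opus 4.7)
The plan is to exploit the multiplicativity of $f$, of $c_s$ as a function of $s$ with $s$ squarefree, and of $d_\kappa$, in order to factor each Dirichlet series as an Euler product over the primes dividing $s$, and then to simplify the resulting local factors to the stated shape.

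First I would parametrise integers $b$ with $p\mid b\Rightarrow p\mid s$ via their $p$-adic valuations $\nu_p\geq 0$ for $p\mid s$; since $s$ is squarefree and $f$ is multiplicative, one has $f(b)=\prod_{p\mid s}f(p^{\nu_p})$. Next, from \eqref{mainpropc_q} applied to $c_p(p^\nu)$ one gets $c_p(p^\nu)=p-1$ when $\nu\geq 1$ and $c_p(1)=-1$, and for squarefree $s$ the multiplicativity of Ramanujan sums in the modulus yields $c_s(b)=\prod_{p\mid s}c_p(p^{\nu_p})$. Inserting these factorisations into $\Theta(\sigma)$ and interchanging sum and product gives
\begin{equation*}
\Theta(\sigma)=\prod_{p\mid s}\sum_{\nu\geq 0}\frac{f(p^\nu)c_p(p^\nu)}{p^{\nu\sigma}}=\prod_{p\mid s}\Bigl(-1+(p-1)\sum_{\nu\geq 1}\frac{f(p^\nu)}{p^{\nu\sigma}}\Bigr),
\end{equation*}
and adding and subtracting the $\nu=0$ term inside the tail, i.e.\ writing $-1=-p+(p-1)$, rewrites this immediately as $\prod_{p\mid s}\bigl(-p+(p-1)\sum_{\nu\geq 0}f(p^\nu)/p^{\nu\sigma}\bigr)$, which is the claimed formula.

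For $\tilde\Theta(\sigma)$ I would proceed identically, observing that for squarefree $s$ the function $b\mapsto(b,s)$ is multiplicative with local value $p^{\min(\nu_p,1)}$ at the prime $p\mid s$, while $d_\kappa$ is multiplicative with $\sum_{\nu\geq 0}d_\kappa(p^\nu)/p^{\nu\sigma}=(1-1/p^\sigma)^{-\kappa}$. Hence
\begin{equation*}
\tilde\Theta(\sigma)=\prod_{p\mid s}\Bigl(1+p\sum_{\nu\geq 1}\frac{d_\kappa(p^\nu)}{p^{\nu\sigma}}\Bigr)=\prod_{p\mid s}\bigl(1-p+p(1-1/p^\sigma)^{-\kappa}\bigr),
\end{equation*}
which is the second identity.

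No genuine obstacle is expected: the argument is purely formal once one knows that $c_s$ is multiplicative in $s$ for squarefree modulus, so the only point requiring mild care is the bookkeeping of the $\nu=0$ contribution when converting the restricted sum $\sum_{\nu\geq 1}$ into the full $\sum_{\nu\geq 0}$ so as to match the displayed form. Convergence of the local sums for $\sigma\geq 1$ follows from the bounds $|f(p^\nu)|\leq d_\kappa(p^\nu)$ and the $\kappa$-dependent Euler factor estimate, so the manipulations are legitimate throughout.
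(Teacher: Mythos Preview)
Your proof is correct and slightly more direct than the paper's. You factor $\Theta(\sigma)$ immediately as a finite Euler product by writing $c_s(b)=\prod_{p\mid s}c_p(p^{\nu_p})$ and $f(b)=\prod_{p\mid s}f(p^{\nu_p})$, then evaluate each local factor using $c_p(1)=-1$ and $c_p(p^\nu)=p-1$ for $\nu\geq 1$. The paper instead first expands $c_s(n)$ via \eqref{mainpropc_q} inside the \emph{unrestricted} Dirichlet series $\sum_n f(n)c_s(n)/n^\sigma$, swaps the order of summation to obtain an expression of the form $F(\sigma)F_s(\sigma)$ with $F_s$ an explicit product over $p\mid s$, and only then specialises to integers supported on the primes of $s$ to recover $\Theta(\sigma)$; for $\tilde\Theta(\sigma)$ it proceeds analogously, using $(n,s)=\sum_{d\mid(n,s)}\phi(d)$ in place of the M\"obius expansion. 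Your route avoids this detour through the global series and the auxiliary function $\ell(d)$, at the modest cost of invoking the multiplicativity of $c_q$ in $q$ and the fact that $c_p(b)$ depends only on $v_p(b)$, both of which are immediate from \eqref{mainpropc_q}. The paper's approach does yield the marginally more general identity $\sum_n f(n)c_s(n)/n^\sigma=F(\sigma)F_s(\sigma)$ along the way, but this is not used elsewhere, so your shortcut loses nothing.
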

\begin{proof}
For a general multiplicative function $f(n)$ we have
\begin{equation*}
\sum_{n}\frac{f(n)c_s(n)}{n^{\sigma}}=\sum_{n}\frac{f(n)}{n^{\sigma}}\sum_{d|n,d|s}\mu(s/d)d=\sum_{d|s}\mu(s/d)d^{1-\sigma}\sum_{k}\frac{f(dk)}{k^{\sigma}},
\end{equation*}
by \eqref{mainpropc_q}. Let $F(\sigma)$ indicate the Dirichlet series of $f$. We denote with $v_p(n)$ the $p$-adic valuation of $n$. Then we get
\begin{align*}
\sum_{k}\frac{f(dk)}{k^{\sigma}}&=\prod_p \sum_{\nu\geq 0}\frac{f(p^{\nu+v_p(d)})}{p^{\nu \sigma}}\\
&=\prod_{p\nmid d}\left(1+\frac{f(p)}{p^{\sigma}}+\frac{f(p^2)}{p^{2\sigma}}+\cdots\right)\prod_{\substack{p^{a}||d\\a\geq 1}}\sum_{\nu\geq 0}\frac{f(p^{\nu+a})}{p^{\nu \sigma}}.
\end{align*}
Therefore, we can write 
$$\sum_{n}\frac{f(n)c_s(n)}{n^{\sigma}}=F(\sigma)\sum_{d|s}\mu(s/d)d^{1-\sigma}\ell(d),$$
where $\ell(d)$ is the multiplicative function given by 
$$\ell(d)=\prod_{\substack{p^{a}||d\\a\geq 1}}\left(1+\frac{f(p)}{p^{\sigma}}+\frac{f(p^2)}{p^{2\sigma}}+\cdots\right)^{-1}\sum_{\nu\geq 0}\frac{f(p^{\nu+a})}{p^{\nu \sigma}}.$$
From this, we immediately find that
\begin{equation*}
\sum_{n}\frac{f(n)c_s(n)}{n^{\sigma}}=F(\sigma)F_s(\sigma),
\end{equation*}
with $F_s(\sigma)$ equal to
$$\prod_{p|s}\left(1+\frac{f(p)}{p^{\sigma}}+\frac{f(p^2)}{p^{2\sigma}}+\cdots\right)^{-1}\bigg(-1+(p-1)\sum_{\nu\geq 1}\frac{f(p^{\nu})}{p^{\nu \sigma}}\bigg),$$
since $s$ is square-free. Therefore, it follows that
$$\Theta(\sigma)=\sum_{\substack{p|b\Rightarrow p|s}}\frac{f(b)c_s(b)}{b^{\sigma}}=\prod_{p|s}\bigg(-p+(p-1)\sum_{\nu\geq 0}\frac{f(p^{\nu})}{p^{\nu \sigma}}\bigg).$$
This concludes the search for the Euler product form of $\Theta(\sigma)$. Regarding $\tilde{\Theta}(\sigma)$ instead, if we indicate with $G(\sigma)$ the Dirichlet series of $d_\kappa(n)\textbf{1}_{p|n\Rightarrow p|s}$, by using the identity $(n,s)=\sum_{d|n,d|s}\phi(d)$ we get
\begin{align*}
\sum_{\substack{n:\\ p|n\Rightarrow p|s}}\frac{d_\kappa(n)(n,s)}{n^{\sigma}}&=G(\sigma)\prod_{p|s}\bigg(1+\frac{(p-1)}{p^{\sigma}}\left(1+\frac{d_\kappa(p)}{p^{\sigma}}+\frac{d_\kappa(p^2)}{p^{2\sigma}}+\cdots\right)^{-1}\sum_{\nu\geq 0}\frac{d_\kappa(p^{\nu+1})}{p^{\nu \sigma}}\bigg)\\
&=G(\sigma)\prod_{p|s}\bigg(p-(p-1)\bigg(1-\frac{1}{p^{\sigma}}\bigg)^{\kappa}\bigg)\\
&=\prod_{p|s}\bigg(1-p+p\bigg(1-\frac{1}{p^{\sigma}}\bigg)^{-\kappa}\bigg),
\end{align*}
since $s$ is squarefree. The proof of the lemma is completed.
\end{proof}
We now show that each term in the sum on the right hand side of \eqref{asymptoticsumb_2} corresponding to a $k\geq 1$ gives a smaller contribution compared to the $k=0$ term. Let us start by noticing that 
$$\Theta^{(k)}(\sigma)=(-1)^{k}\sum_{\substack{p|b\Rightarrow p|s}}\frac{f(b)c_s(b)}{b^{\sigma}}\log^{k}b$$
and defining
\begin{equation}
\label{defsmalltheta}
\theta(\sigma):=\frac{\Theta'}{\Theta}(\sigma)=\sum_{p|s}\frac{\gamma_{p}'(\sigma)}{\gamma_{p}(\sigma)},
\end{equation}
where 
\begin{equation}
\label{eq16}
\gamma_{p}(\sigma):=-p+(p-1)\sum_{\nu\geq 0}\frac{f(p^{\nu})}{p^{\nu \sigma}}.
\end{equation}
Using the Fa\`a di Bruno's formula \cite[p. 807, Theorem 2]{R} we see that
\begin{equation}
\label{FaadeBruno}
\Theta^{(k)}(1)=(e^{\log\Theta(\sigma)})^{(k)}\big|_{\sigma=1}=\Theta(1)k!\sum_{m_1+2m_2+...+km_k=k}\frac{\prod_{j=1}^{k}(\theta^{(j-1)}(1))^{m_j}}{1!^{m_1}m_1!\cdots k!^{m_k}m_k!}.
\end{equation}
Consequently, we need an estimate for the logarithmic derivative of $\Theta$ and its derivatives. To this aim we first note that $(\gamma_{p}'/\gamma_{p})^{(h)}(1)=(\log \gamma_{p}(\sigma))^{(h+1)}|_{\sigma=1}$, which again by the Fa\`a di Bruno's formula is
\begin{align}
\label{FaadeBruno2}
=(h+1)!\sum_{m_1+2m_2+...+(h+1)m_{h+1}=h+1}\frac{(-1+m_1+m_2+...+m_{h+1})!}{1!^{m_1}m_1!\cdots (h+1)!^{m_{h+1}}m_{h+1}!}\prod_{j=1}^{h+1}\bigg(\frac{-\gamma_{p}^{(j)}(1)}{\gamma_{p}(1)}\bigg)^{m_j}.
\end{align}
We observe that
$$\gamma_{p}(1)=-1+f(p)-\frac{f(p)}{p}+\frac{f(p^2)}{p}-\cdots=\sum_{\nu\geq 0}\frac{g(p^{\nu+1})}{p^{\nu}},$$
where $g(n)$ is the multiplicative function defined by $f(n)=g\ast \textbf{1}(n)$.
Hence, 
$$|\gamma_p(1)|\geq |g(p)|+O_{\kappa}(1/p),$$
since for any $j\geq 2$ we have $g(p^{j})=f(p^{j})-f(p^{j-1})$, from which 
$|g(p^{j})|\leq d_{\kappa}(p^{j})+d_{\kappa}(p^{j-1})$.
We note that $|g(p)|$ coincides exactly with the absolute value of the previously defined function $g$ at $p$, without notational issues. Moreover, thanks to restriction $(5.b)$ we get $|\gamma_p(1)|>0$, if we choose $C=C(\kappa)$ large enough, thus making \eqref{FaadeBruno2} well defined.

On the other hand, we can rewrite $\gamma_{p}'(\sigma)$ as
$$\gamma_{p}'(\sigma)=(p-1)\sum_{\nu\geq 1}\frac{f(p^{\nu})}{p^{\nu\sigma}}(-\nu\log p)$$
from which we immediately deduce that
$$\gamma_{p}^{(j)}(1)=(p-1)\sum_{\nu\geq 1}\frac{f(p^{\nu})}{p^{\nu}}(-\nu\log p)^{j}\ \ \ (j\geq 1).$$
Clearly, $|\gamma_{p}^{(j)}(1)|\leq C_j (\log p)^{j},$ for fixed values of $j\geq 1$ and a certain constant $C_j=C_j(\kappa)>0$. 

Inserting the above estimates in \eqref{FaadeBruno2} we obtain
$$\bigg|\bigg(\frac{\gamma_{p}'}{\gamma_{p}}\bigg)^{(h)}(1)\bigg|\leq\tilde{C}_h\bigg(\frac{\log p}{\min\{|\gamma_p(1)|,1\}}\bigg)^{h+1},$$
for fixed values of $h$ and suitable constants $\tilde{C}_h=\tilde{C}_h(\kappa)>0$, which inserted in \eqref{defsmalltheta} gives
$$|\theta^{(j-1)}(1)|\leq \tilde{C}_j \sum_{p|s}\bigg(\frac{\log p}{\min\{|\gamma_p(1)|,1\}}\bigg)^j\leq \tilde{C}_j \tilde{\gamma}_s^{j}\max_{p|s}\{(\log p)^{j}\}\omega(s)\leq \tilde{C}_j B{^j}\tilde{\gamma}_s^{j}\omega(s)(\log\log N)^{j},$$
defining $\tilde{\gamma}_s:=\max_{p|s}\min\{|\gamma_p(1)|,1\}^{-1}.$
Finally, by restriction $(2)$ on $q$, we deduce
$$|\theta^{(j-1)}(1)|\leq A\tilde{C}_j B^j\tilde{\gamma}_s^{j}(\log\log N)^{j+1}.$$
Inserting this into \eqref{FaadeBruno} we obtain
\begin{equation}
\label{estimatederivtheta}
\Theta^{(k)}(1)\ll_{k,\kappa} |\Theta(1)|\xi^k\tilde{\gamma}_s^{k}(\log \log N)^{2k},
\end{equation}
for fixed values of $k$ and a constant $\xi=\xi(A,B,\tilde{C}_{1}(\kappa),\dots, \tilde{C}_{h}(\kappa))$. For future reference we observe that the explicit multiplicative form of $\Theta(1)$ is given by
\begin{equation}
\label{formoftheta1}
\Theta(1)=\prod_{p|s}\bigg(-p+(p-1)\sum_{\nu\geq 0}\frac{f(p^{\nu})}{p^{\nu}}\bigg)=\prod_{p|s}\bigg(\sum_{\nu\geq 0}\frac{g(p^{\nu+1})}{p^{\nu}}\bigg)=\prod_{p|s}\bigg(g(p)+O_\kappa\bigg(\frac1{p}\bigg)\bigg).
\end{equation}
We conclude this section by estimating the series in the first error term in \eqref{asymptoticsumb_2}. First, since $q\in\mathcal{A}$, we also have
$$\tilde{\Theta}(1)=\prod_{p|s}\bigg(1+\kappa+O_{\kappa}\bigg(\frac1{p}\bigg)\bigg)\ll_{\kappa,D}d_{\kappa+1}(s).$$
Second, by Lemma \ref{lemdirichletseries} and arguing as above, we find
\begin{equation}
\label{estimatederivthetatilde}
\tilde{\Theta}^{(k)}(1)\ll_{k,\kappa} |\tilde{\Theta}(1)|\tilde{\xi}^{k}(\log \log N)^{2k}\ll_{\kappa,D} \tilde{\xi}^{k}d_{\kappa+1}(s)(\log \log N)^{2k},
\end{equation}
for a suitable $\tilde{\xi}=\tilde{\xi}(A,B,\kappa)>0$. Inserting the bound for $\tilde{\Theta}^{(K+1)}(1)$ inside the first error term in \eqref{asymptoticsumb_2}, we obtain that this last one is
\begin{equation}
\label{errortildetheta}
\ll_{\kappa,D,K}\tilde{\xi}^{K+1}d_{\kappa+1}(s)(\log N)^{\Re(\tilde{\alpha})-K-1}(\log \log N)^{2K+2},
\end{equation}
using that $\sqrt{N}\ll M\ll N$, which exceeds the second error term in \eqref{asymptoticsumb_2}, if $N$ large enough in terms of $\delta,\kappa,A,A_1,B,D,K$. Collecting together \eqref{eq14}, \eqref{asymptoticsumb_2} and \eqref{errortildetheta}, we conclude that
\begin{align*}
\sum_{\substack{b\leq N/4\\ p|b\Rightarrow p|q}}\frac{f(b)c_q(b)}{b}\log^{\tilde{\alpha}}(N/b)&=\sum_{k=0}^{K}(-1)^{k}\binom{-\tilde{\alpha}+k-1}{k}\Theta^{(k)}(1)\sum_{\substack{b_1\leq \sqrt{N}\\ p|b_1\Rightarrow p|r}}\frac{f(b_1)c_r(b_1)}{b_1}(\log (N/b_1))^{\tilde{\alpha}-k}\\
&+O\bigg(\tilde{\xi}^{K+1}\sum_{\substack{b_1\leq \sqrt{N}\\ p|b_1\Rightarrow p|r}}\frac{d_{\kappa}(b_1)|c_r(b_1)|}{b_1}d_{\kappa+1}(s)(\log N)^{\Re(\tilde{\alpha})-K-1}(\log\log N)^{2K+2}\bigg)\\
&+O_{\kappa,D}\bigg(\frac{(\log N)^{\kappa-1}}{N^{3\delta/16}}d_{\kappa+1}(q)\bigg)\nonumber,
\end{align*}
which, by Lemma \ref{lemmatail}, can be rewritten as
\begin{align}
\label{rawformsumf(n)c_q(n)}
&\sum_{k=0}^{K}(-1)^{k}\binom{-\tilde{\alpha}+k-1}{k}\Theta^{(k)}(1)\sum_{\substack{b_1\leq \sqrt{N}\\ p|b_1\Rightarrow p|r}}\frac{f(b_1)c_r(b_1)}{b_1}(\log(N/b_1))^{\tilde{\alpha}-k}\\
&+O_{\delta,\kappa,D,K}\bigg(\tilde{\xi}^{K+1} d_{\kappa+1}(q)(\log N)^{\Re(\tilde{\alpha})-K-1}(\log\log N)^{2K+2}\bigg),\nonumber
\end{align}
if we choose $N$ large enough compared to $\delta,\kappa,A,A_1,B,D$ and $K$.
\section{The contribution from large prime factors}
We now need to compute the innermost sums in \eqref{rawformsumf(n)c_q(n)}. In order to simplify the calculations we observe that the main contribution comes only from squarefree values. Indeed, since $\kappa>1$, we have
\begin{align*}
\sum_{\substack{b_1\leq \sqrt{N}\\ p|b_1\Rightarrow p|r\\ b_1\ \text{not-squarefree}}}\frac{f(b_1)c_r(b_1)}{b_1}(\log(N/b_1))^{\tilde{\alpha}-k}&\ll \sum_{e|r}e\sum_{\substack{b_1\leq \sqrt{N}\\ p|b_1\Rightarrow p|r\\ b_1\ \text{not-squarefree}\\ e|b_1}}\frac{d_{\kappa}(b_1)}{b_1}(\log(N/b_1))^{\Re(\tilde{\alpha})-k}\\
&\ll(\log N)^{\Re(\tilde{\alpha})-k}\sum_{e|r}d_{\kappa}(e)\sum_{\substack{t\leq \sqrt{N}/e\\ p|t\Rightarrow p|r\\ t\neq 1}}\frac{d_{\kappa}(t)}{t}\\
&\leq (\log N)^{\Re(\tilde{\alpha})-k-B/4}d_{\kappa+1}(r)\sum_{\substack{p|t\Rightarrow p|r}}\frac{d_{\kappa}(t)}{t^{3/4}}\\
&\ll_{\kappa,D}(\log N)^{\Re(\tilde{\alpha})-k-B/4}d_{\kappa+1}(r),
\end{align*}
by condition $(6)$ on $q$. Using \eqref{estimatederivtheta} we find an overall contribution to \eqref{rawformsumf(n)c_q(n)} of at most
\begin{equation}
\label{contributionnonsqurefree}
(\log N)^{\Re(\tilde{\alpha})-B/4}|\Theta(1)|d_{\kappa+1}(r)\sum_{k=0}^{K}\bigg|\binom{-\tilde{\alpha}+k-1}{k}\bigg|\xi^{k}\tilde{\gamma}_s^{k}\bigg(\frac{(\log\log N)^2}{\log N}\bigg)^k.
\end{equation}
Now, by conditions $(4)$ and $(5.a)$ on $s$, we have $\tilde{\gamma}_s\ll \sqrt{\log\log N}.$ Moreover 
$$|\Theta(1)|\leq \prod_{p|s}\bigg(\kappa+1+O_\kappa\bigg(\frac1{p}\bigg)\bigg)\ll_{\kappa,D}d_{\kappa+1}(s),$$ by condition $(6)$ on $q$. Therefore, taking e.g. $B=4(K+2)$ and remembering that $\xi=\xi(A,B,\kappa)$, where we will be taking $A$ as a function of only $\kappa$ and $A_1$, we may conclude that \eqref{contributionnonsqurefree} will contribute $\ll_{\kappa,A_1,D,K}d_{\kappa+1}(q)(\log N)^{\Re(\tilde{\alpha})-K-2},$ which will be absorbed into 
the error term of \eqref{rawformsumf(n)c_q(n)}, if we choose $N$ sufficiently large with respect to $\delta,\kappa,A_1,D$ and $K$.

We are left with the estimate of 
\begin{align}
\label{sumdivisorr}
\sum_{\substack{b_1\leq \sqrt{N}\\ p|b_1\Rightarrow p|r\\ b_1\ \text{squarefree}}}\frac{f(b_1)c_r(b_1)}{b_1}(\log (N/b_1))^{\tilde{\alpha}-k}&=\sum_{\substack{b_1|r}}\frac{f(b_1)c_r(b_1)}{b_1}(\log(N/b_1))^{\tilde{\alpha}-k}\\
&=\mu(r)\sum_{\substack{b_1|r}}\frac{f(b_1)\phi(b_1)\mu(b_1)}{b_1}(\log(N/b_1))^{\tilde{\alpha}-k}\nonumber
\end{align}
since $r$ is square-free and $r\leq N^{1/2-3\delta/4}$. Note that we can replace the last sum with 
\begin{equation}
\label{newformsumovers}
\mu(r)\sum_{\substack{b|r}}f(b)\mu(b)(\log(N/b))^{\tilde{\alpha}-k}
\end{equation}
at the cost of a small error term. Indeed
\begin{align*}
\bigg|\frac{\phi(b)}{b}-1\bigg|=\bigg|\exp\bigg(\sum_{p|b}\log\bigg(1-\frac1{p}\bigg)\bigg)-1\bigg|&=\bigg|\exp\bigg(O\bigg(\sum_{p|b}\frac{1}{p}\bigg)\bigg)-1\bigg|\\
&\ll\sum_{p|b}\frac{1}{p}\ll \frac{1}{(\log N)^{B/4}}\sum_{p|b}\frac{1}{p^{3/4}}\ll_D \frac{1}{(\log N)^{B/4}}.
\end{align*}
Arguing as before, its overall contribution to \eqref{rawformsumf(n)c_q(n)} will be absorbed in the big-Oh term there.

Now, assuming $r$ of the form $r=ts'$, with $t$ and $s'$ as in restrictions $(3.b)-(3.d)$ on $q$, we can rewrite \eqref{newformsumovers} as 
\begin{equation}
\label{17}
\mu(r)\sum_{\substack{b|s'}}f(b)\mu(b)(\log(N/b))^{\tilde{\alpha}-k}-f(t)\mu(r)\sum_{\substack{b|s'}}f(b)\mu(b)(\log(N/tb))^{\tilde{\alpha}-k}.
\end{equation}
For $M\in\{N,N/t\}$, we write 
\begin{equation}
\label{lastbinomexp}
\sum_{\substack{b|s'}}f(b)\mu(b)(\log(M/b))^{\tilde{\alpha}-k}=\sum_{h=0}^{\infty}\binom{-\tilde{\alpha}+k+h-1}{h}(\log M)^{\tilde{\alpha}-k-h}\sum_{\substack{b|s'}}f(b)\mu(b)\log^{h} b.
\end{equation}
In the next section we will need estimates for $\sum_{\substack{b|s'}}f(b)\mu(b)\log^{h}b$ when $h\geq 1$. This is what we achieve next.
\begin{lem}
\label{lemmultinomialthm}
For any $h\geq 1$ and $s'$ as before, satisfying in particular condition $(3.c)$ and $(5.a)$, we have 
$$\bigg|\sum_{\substack{b|s'}}f(b)\mu(b)\log^{h}b\bigg|\leq |g(s')|(\eps \log N)^h.$$
\end{lem}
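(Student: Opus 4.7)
The plan is to recognize the sum as $(-1)^h$ times the $h$-th derivative at $\sigma=0$ of the Dirichlet polynomial $P(\sigma):=\prod_{p\mid s'}(1-f(p)p^{-\sigma})$. Indeed, since $s'$ is squarefree, expanding the product yields $P(\sigma)=\sum_{b\mid s'}f(b)\mu(b)b^{-\sigma}$, and differentiating $h$ times brings down a factor $(-\log b)^h$. At $\sigma=0$ we have $P(0)=\prod_{p\mid s'}(1-f(p))=(-1)^{\omega(s')}g(s')$, which is nonzero thanks to condition $(5.a)$. So the target inequality becomes $|P^{(h)}(0)|\leq |g(s')|(\eps\log N)^h$.

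Next I would apply the general Leibniz rule. Writing $s'=p_1\cdots p_m$ and $A_p(\sigma)=1-f(p)p^{-\sigma}$, we have $A_p(0)=-g(p)$ and $A_p^{(k)}(0)=(-1)^{k+1}f(p)(\log p)^k$ for $k\geq 1$. Factoring out $A_{p_i}(0)$ from each factor in the Leibniz expansion and taking absolute values gives
\[
|P^{(h)}(0)|\leq |g(s')|\sum_{\substack{k_1+\cdots+k_m=h\\ k_i\geq 0}}\frac{h!}{k_1!\cdots k_m!}\prod_{i:\,k_i\geq 1}\frac{|f(p_i)|(\log p_i)^{k_i}}{|g(p_i)|}.
\]

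The key pointwise estimate, which I would prove by a quick case analysis, is that for each prime $p\mid s'$ and each integer $k\geq 1$,
\[
\frac{|f(p)|(\log p)^k}{|g(p)|}\leq \kappa\bigg(\frac{\log p}{\min\{|g(p)|,1\}}\bigg)^k.
\]
If $|g(p)|\leq 1$ the minimum is $|g(p)|$, the right-hand side is $\kappa(\log p)^k/|g(p)|^k$, and the inequality reduces to $|f(p)|\cdot |g(p)|^{k-1}\leq \kappa$, which follows from $|f(p)|\leq \kappa$ and $|g(p)|^{k-1}\leq 1$. If $|g(p)|>1$ the minimum is $1$, the right-hand side is $\kappa(\log p)^k$, and the inequality reduces to $|f(p)|/|g(p)|\leq\kappa$, which is clear. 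Since $\kappa>1$ we may further replace $\kappa$ by $\kappa^{k_i}$ whenever $k_i\geq 1$; the factor for $k_i=0$ being trivially $1=(\kappa\log p_i/\min\{|g(p_i)|,1\})^0$, the multinomial theorem collapses the sum to
\[
\bigg(\kappa\sum_{p\mid s'}\frac{\log p}{\min\{|g(p)|,1\}}\bigg)^h.
\]

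Condition $(3.c)$, namely $s'\in\mathcal{A}'$, bounds this last quantity by $\eps\log N$, completing the proof. The only real creative step is the one that turns the sum into $P^{(h)}(0)$ so that the factor $g(s')$ drops out cleanly; everything else is routine, with the pointwise inequality above providing the precise matching between the combinatorics of the multinomial expansion and the arithmetic quantity appearing in the definition of $\mathcal{A}'$. I do not expect any further obstacle.
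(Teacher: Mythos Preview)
Your proof is correct and essentially identical to the paper's: both recognize the sum as $(-1)^h P^{(h)}(0)$ for $P(\sigma)=\prod_{p\mid s'}(1-f(p)p^{-\sigma})$, apply the general Leibniz/multinomial expansion, factor out $\prod_{p\mid s'}(1-f(p))=\pm g(s')$ using condition $(5.a)$, bound each remaining factor by $(\kappa\log p/\min\{|g(p)|,1\})^{k_i}$, and close with the multinomial theorem together with condition $(3.c)$. The only cosmetic difference is that the paper packages the pointwise bound as $\max\{|f(p)/g(p)|,1\}\leq\kappa/\min\{|g(p)|,1\}$ rather than your two-case analysis, but the content is the same.
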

\begin{proof}
With the same spirit of what was previously done in section 7, we can write
\begin{align*}
\sum_{\substack{b|s'}}f(b)\mu(b)\log^{h} b&=(-1)^{h}\frac{d^{h}}{d\sigma^{h}}\bigg(\prod_{p|s'}\bigg(1-\frac{f(p)}{p^{\sigma}}\bigg)\bigg)\bigg|_{\sigma=0}\\
&=(-1)^{h}\sum_{j_1+j_2+\cdots+j_{\omega(s')}=h}\binom{h}{j_1,j_2,\dots,j_{\omega(s')}}\prod_{i=1}^{\omega(s')}\bigg(1-\frac{f(p_i)}{p_i^{\sigma}}\bigg)^{(j_i)}\bigg|_{\sigma=0}.
\end{align*}
We have
\[\bigg(1-\frac{f(p_i)}{p_i^{\sigma}}\bigg)^{(j_i)}\bigg|_{\sigma=0}= \left\{ \begin{array}{ll}
         1-f(p_i)& \mbox{if $j_i=0$};\\
         -f(p_i)(-\log p_i)^{j_i} & \mbox{if $j_i\neq 0$}.\end{array} \right. \]
Hence, we can rewrite the above expression as
$$(-1)^{h}\sum_{j_1+j_2+\cdots+j_{\omega(s')}=h}\binom{h}{j_1,j_2,\dots,j_{\omega(s')}}\prod_{\substack{i=1,\dots,\omega(s')\\ j_i\neq 0}}(-f(p_i))\prod_{\substack{i=1,\dots, \omega(s')\\ j_i=0}}(1-f(p_i))\prod_{i=1}^{\omega(s')}(-\log p_i)^{j_i}.$$
Since $s'$ satisfies condition $(5.a)$ and in particular for any prime $p|s'$ we have $f(p)\neq 1$, we may rewrite the above as
$$\prod_{p|s'}(1-f(p))\sum_{j_1+j_2+\cdots+j_{\omega(s')}=h}\binom{h}{j_1,j_2,\dots,j_{\omega(s')}}\prod_{\substack{i=1,\dots,\omega(s')\\ j_i\neq 0}}\bigg(\frac{-f(p_i)}{1-f(p_i)}\bigg)\prod_{i=1}^{\omega(s')}(\log p_i)^{j_i}.$$
Now, observe the above expression is upper bounded in absolute value by
\begin{align}
\label{estbinom}
&\leq |g(s')|\sum_{j_1+j_2+\cdots+j_{\omega(s')}=h}\binom{h}{j_1,j_2,\dots,j_{\omega(s')}}\prod_{i=1}^{\omega(s')}\bigg(\max\bigg\{\bigg|\frac{f(p_i)}{g(p_i)}\bigg|, 1\bigg\}\log p_i\bigg)^{j_i}\\
&=|g(s')|\bigg(\sum_{p|s'}\max\bigg\{\bigg|\frac{f(p)}{g(p)}\bigg|, 1\bigg\}\log p\bigg)^{h},\nonumber
\end{align}
by the multinomial theorem \cite{N}. Finally, note that 
$$\max\bigg\{\bigg|\frac{f(p)}{g(p)}\bigg|, 1\bigg\}\leq \max\bigg\{\frac{\kappa}{|g(p)|}, 1\bigg\}\leq \frac{\kappa}{\min\{|g(p)|,1\}}.$$
Since $s'$ satisfies restriction $(3.c)$, the second line of \eqref{estbinom} is 
\begin{equation*}
\label{estbinom2}
\leq |g(s')|(\eps \log N)^h,
\end{equation*}
which proves the lemma.
\end{proof}
\section{Combining the different pieces}
Collecting the results \eqref{firtsestimatef(n)c_q(n)}, \eqref{rawformsumf(n)c_q(n)} and \eqref{17}--\eqref{lastbinomexp}, we can see that $\sum_{n\leq N}f(n)c_q(n)$ equals to a main term of 
\begin{align}
\label{finalmainterm}
&N(\log N)^{\alpha-1}\sum_{j=0}^{J}\frac{-\lambda_j}{(\log N)^j}\sum_{k=0}^{K}(-1)^{k}\binom{-\alpha+j+k}{k}\frac{\Theta^{(k)}(1)}{(\log N)^{k}}\times\\
&\sum_{h=0}^{\infty}\binom{-\alpha+j+k+h}{h}\sum_{b|s'}f(b)\mu(s'/b)\frac{(\log b)^h}{(\log N)^h}\bigg(1-f(t)\bigg(1-\frac{\log t}{\log N}\bigg)^{\alpha-1-j-k-h}\bigg),\nonumber
\end{align}
since $\mu(r)=\mu(t)\mu(s')=-\mu(s'),$ plus an error term of
\begin{align}
\label{finalerror0}
&+O\bigg(d_{\kappa+1}(q)N\sum_{j=0}^{J}|\lambda_j|(\log N)^{\Re(\alpha)-j-K-2}(\log\log N)^{2(K+1)}\bigg)\\
&+O\bigg(d_{\kappa+1}(q)N(\log N)^{\kappa-1-A_1}(\log\log N)\bigg),\nonumber
\end{align}
where the big-Oh terms may depend on $\delta,\kappa,A_1,D,K$ and the implicit constant in \eqref{mainstatistic1} and the $\lambda_j$ are as in Theorem \ref{GKvariant}. We remind that $t$ indicates a prime number in the interval 
$$[N^{1/2-3\delta/4-\eps}, N^{1/2-3\delta/4-\eps/2}].$$

In order to estimate the contribution of the sum of the $\lambda_j$'s we are going to prove the following lemma.
\begin{lem}
\label{lemderivativeseulerproduct2}
Let $f$ be a multiplicative function such that $|f(n)|\leq d_{\kappa}(n)$, for a certain real positive constant $\kappa>1$ and any $N$-smooth integer $n$. Let $q$ be a squarefree positive integer smaller than $N$ satisfying condition $(4)$, with a large $C=C(\kappa,A_1)>\kappa+1$, and $(6)$. 

Then the coefficients $\lambda_j'=\Gamma(\alpha-j)\lambda_j$, where $\lambda_j$ are as defined in the statement of Theorem \ref{GKvariant}, satisfy $\lambda_j'\ll 1$, for $j=0,\dots,J$, with an implicit constant depending on $\kappa,A_1,D$ and that one in \eqref{mainstatistic1}.
\end{lem}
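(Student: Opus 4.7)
The plan is to reduce the uniform bound $\lambda_j' \ll 1$ to uniform bounds for the derivatives of $H_q^{-1}$ at $z=1$, and then to control those via the Fa\`a di Bruno formula, mimicking the strategy of Lemma \ref{lemderivativeseulerproduct0} and Lemma \ref{lemderivativeseulerproduct}. By the definition of the $\lambda_j$ in Theorem \ref{GKvariant},
$$\lambda_j' = \sum_{l+h=j} \frac{(H_q^{-1})^{(h)}(1)\, c_l}{h!}.$$
Lemma \ref{lemderivativeseulerproduct0} already ensures that each $c_l$, for $0 \le l \le J$, is bounded by a constant depending only on $\kappa, A_1$ and the implicit constant in \eqref{mainstatistic1}, so it suffices to prove that $(H_q^{-1})^{(h)}(1) = O_{\kappa,A_1,D}(1)$ for every $0 \le h \le J$.

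I would first handle the base case $h=0$. Since every prime $p$ dividing $q$ satisfies $p > C > \kappa+1$ by condition $(4)$ and $|f(p^k)| \le d_\kappa(p^k)$, each local factor of $H_q(1)$ has the form $1 + f(p)/p + O_\kappa(1/p^2)$. Taking logarithms yields
$$\log|H_q(1)| = \sum_{p|q} \Re\!\left(\frac{f(p)}{p}\right) + O_\kappa\!\left(\sum_{p|q} \frac{1}{p^2}\right),$$
and the main sum is bounded in absolute value by $\kappa \sum_{p|q} 1/p$. The elementary inequality $1/p \le (\log p)^{A_1+1}/p^{3/4}$, valid for every prime $p \ge 2$, combined with condition $(6)$, gives $\sum_{p|q}1/p \le D$ and hence $|H_q^{-1}(1)| \asymp_{\kappa,D} 1$.

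For $h \ge 1$, I would apply the Fa\`a di Bruno formula to $H_q^{-1}(z) = \exp(-\log H_q(z))$ exactly as in \eqref{firstfaadibruno}, reducing the task to bounding the derivatives of $\eta_q(z) = -(\log H_q)'(z)$. Writing $H_q(z) = \prod_{p|q} G_p(z)$ with $G_p(z) = \sum_{k \ge 0} f(p^k)/p^{kz}$ and differentiating term by term, using $|f(p^k)| \le d_\kappa(p^k)$ and $p > C$, one obtains
$$|\eta_q^{(i-1)}(1)| \le C_{i,\kappa} \sum_{p|q} \frac{(\log p)^i}{p}.$$
Since for $0 \le i \le J$ the estimate $(\log p)^i/p \le (\log p)^{A_1+1}/p^{3/4}$ holds for every $p \ge 2$ (as $J < A_1$ forces $i - A_1 - 1 \le -1$), condition $(6)$ yields $|\eta_q^{(i-1)}(1)| \ll_{i,\kappa,D} 1$ for $1 \le i \le J$. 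Plugging these into the Fa\`a di Bruno expansion and combining with the bound on $H_q^{-1}(1)$ gives $|(H_q^{-1})^{(h)}(1)| \ll_{h,\kappa,D} 1$ for every $0 \le h \le J$, completing the proof.

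The main technical obstacle is that, unlike the analogous bound in Lemma \ref{lemderivativeseulerproduct}, here $f$ need not vanish on the small primes $p \le C$, so one cannot extract any $C^{-1/5}$ smallness factor from condition $(4)$ alone. Fortunately only uniform boundedness is required here, and conditions $(4)$ and $(6)$ together control all the relevant sums over prime factors of $q$ once the inequality $1/p \le (\log p)^{A_1+1}/p^{3/4}$ is invoked.
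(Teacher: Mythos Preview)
Your proof is correct, though it proceeds along a slightly different route from the paper. The paper factors $H_q(z)=\tilde H_q(z)\tilde{\tilde H}_q(z)$ with $\tilde H_q(z)=\prod_{p\mid q}(1+f(p)/p^z)$, observes that the ``squarefull'' correction $\tilde{\tilde H}_q(z)^{-1}$ has all derivatives at $z=1$ bounded uniformly in $q$, and then appeals to Lemma~\ref{lemderivativeseulerproduct} to control the derivatives of $\tilde H_q(z)^{-1}$. Your approach is more direct: you apply Fa\`a di Bruno to $H_q^{-1}$ in one stroke and bound the logarithmic derivative $\eta_q$ and its derivatives by $\sum_{p\mid q}(\log p)^i/p$, which condition~$(6)$ then controls via the comparison $(\log p)^i/p\le(\log p)^{A_1+1}/p^{3/4}$. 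This buys you a self-contained argument that avoids both the factorisation and the citation of Lemma~\ref{lemderivativeseulerproduct}; the paper's route, by contrast, recycles earlier work more economically.

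One small point of wording: the inequality $(\log p)^i/p\le(\log p)^{A_1+1}/p^{3/4}$ can fail for $p=2$ (and similarly $1/p\le(\log p)^{A_1+1}/p^{3/4}$ can fail there when $A_1$ is moderately large), since $\log 2<1$. This does not affect your argument, because condition~$(4)$ guarantees that every prime $p\mid q$ satisfies $p>C>\kappa+1\ge 3$, whence $\log p>1$ and the comparison holds; but you should replace ``for every prime $p\ge 2$'' by ``for every prime $p\mid q$'' (or ``for every $p\ge 3$'') when you revise.
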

\begin{proof}
We remind that the coefficients $\lambda_j'$ are defined as
$$\lambda_j'=\lambda_j'(f,\alpha,q)=\sum_{l+h=j}\frac{(H_q^{-1})^{(h)}(1)c_l}{h!},$$
where
$$H_q(z):=\prod_{p|q}\bigg(1+\frac{f(p)}{p^z}+\frac{f(p^2)}{p^{2z}}+\cdots\bigg)\ \ \ (\Re(z)\geq 1)$$
and for any $0\leq l\leq J$ the $c_l$ are as in the statement of Theorem \ref{GKthm}. By Lemma \ref{lemderivativeseulerproduct0} each $c_l$ is uniformly bounded by a constant possibly depending on $\kappa,A_1,l$ and that in \eqref{mainstatistic1}. 

Therefore, to conclude the proof of the lemma we only need to show that each derivative $(H_q^{-1})^{(h)}(1)$ is bounded. 
However, we can write
$$H_q(z)=\prod_{p|q}\bigg(1+\frac{f(p)}{p^z}\bigg)\prod_{p|q}\bigg(1+\frac{f(p)}{p^z}+\frac{f(p^2)}{p^{2z}}+\cdots\bigg)\bigg(1+\frac{f(p)}{p^z}\bigg)^{-1}=:\tilde{H}_q(z)\tilde{\tilde{H}}_q(z).$$
Now it is not difficult to show that all the derivatives of $\tilde{\tilde{H}}_q(z)^{-1}$ at $z=1$ are uniformly bounded in $q$ and by Lemma \ref{lemderivativeseulerproduct} the same is true for those of $\tilde{H}_q(z)^{-1}$ at $z=1$. Finally, since we have
$$ \frac{d^h}{dz^h}H_q^{-1}(z)|_{z=1}=\sum_{l+k=h}\binom{h}{l}\frac{d^l}{dz^l}\tilde{H}_q^{-1}(z)|_{z=1}\frac{d^k}{dz^k}\tilde{\tilde{H}}_q^{-1}(z)|_{z=1}$$
we obtain the desired conclusion.
\end{proof}
By Lemma \ref{lemderivativeseulerproduct2}, choosing $K:=A_1$ and taking $N$ large enough in terms of $\delta,\kappa,A_1,D$ and the implicit constant in \eqref{mainstatistic1}, we see that the error term \ref{finalerror0} reduces to 
$$\ll d_{\kappa+1}(q)N(\log N)^{\kappa-1-A_1}(\log\log N).$$
Let us now focus on the main term \eqref{finalmainterm}. In the following we will make use several times of the following trivial estimates:
$$\bigg|1-f(t)\bigg(1-\frac{\log t}{\log N}\bigg)^{\alpha-1-j-k-h}\bigg|\leq 1+\kappa (1/2+3\delta/4+\eps/2)^{-\kappa-1-j-k-h}\leq 1+\kappa 2^{\kappa+1+j+k+h},$$
since $t\in [N^{1/2-3\delta/4-\eps},N^{1/2-3\delta/4-\eps/2}]$, if $\delta,\eps$ small, 
and 
\begin{align*}
\bigg|\binom{-\alpha+j+k+h}{h}\bigg|&=\frac{|(-\alpha+j+k+h)(-\alpha+j+k+h-1)\cdots (-\alpha+j+k+1)|}{h!}\\
&\leq \frac{(|\alpha|+j+k+h)(|\alpha|+j+k+h-1)\cdots (|\alpha|+j+k+1)}{h!}\nonumber\\
&=\binom{|\alpha|+j+k+h}{h}\leq \binom{\kappa+J+k+h}{h}\nonumber
\end{align*}
as well as similarly 
$$\bigg|\binom{-\alpha+j+k}{k}\bigg|\leq \binom{\kappa+J+k}{k}.$$
The contribution of the sum over $j\geq 1$ in \eqref{finalmainterm} is
$$\ll N(\log N)^{\Re(\alpha)-1}|g(s')\Theta(1)|\sum_{j=1}^{J}\frac{2^j |\lambda_j|}{(\log N)^j}\ll N(\log N)^{\Re(\alpha)-2}\frac{|g(s')\Theta(1)(\alpha-1)|}{|\Gamma(\alpha)|},$$
by using in sequence Lemma \ref{lemmultinomialthm}, the upper bound \eqref{estimatederivtheta}, conditions $(4)$ and $(5.a)$ on $s$, to estimate $\tilde{\gamma}_s$, and Lemma \ref{lemderivativeseulerproduct2}. Thus, the main term in \eqref{finalmainterm} reduces to 
\begin{align}
\label{finalmainterm2}
&-N(\log N)^{\alpha-1}\lambda_0\sum_{k=0}^{K}(-1)^{k}\binom{-\alpha+k}{k}\frac{\Theta^{(k)}(1)}{(\log N)^{k}}\times\\
&\sum_{h=0}^{\infty}\binom{-\alpha+k+h}{h}\sum_{b|s'}f(b)\mu(s'/b)\frac{(\log b)^h}{(\log N)^h}\bigg(1-f(t)\bigg(1-\frac{\log t}{\log N}\bigg)^{\alpha-1-k-h}\bigg).\nonumber
\end{align}
Working in a similar way as before, the contribution of the sum over $k\geq 1$ in \eqref{finalmainterm2} is
$$\ll N(\log N)^{\Re(\alpha)-2}(\log\log N)^{5/2}\frac{|c_0 g(s')\Theta(1)(\alpha-1)|}{|\Gamma(\alpha)|},$$
where we noticed that
$$\binom{-\alpha+k}{k}=\frac{(-\alpha+1)}{k}\binom{-\alpha+1+k-1}{k-1},$$
for any $k\geq 1$, and replaced $\lambda_0$ with
$$\lambda_0=\frac{c_0H_q^{-1}(1)}{\Gamma(\alpha)},$$
as in Theorem \ref{GKvariant}, and used Lemma \ref{lemderivativeseulerproduct} to estimate $|H_q^{-1}(1)|$. 

Thus, \eqref{finalmainterm2} reduces to 
\begin{align*}
&-N(\log N)^{\alpha-1}\lambda_0\Theta(1)\sum_{h=0}^{\infty}\binom{-\alpha+h}{h}\sum_{b|s'}f(b)\mu(s'/b)\frac{(\log b)^h}{(\log N)^h}\bigg(1-f(t)\bigg(1-\frac{\log t}{\log N}\bigg)^{\alpha-1-h}\bigg).
\end{align*}
Again, similar considerations lead to the following estimate for the contribution of the sum over $h\geq 1$ above:
$$\ll N(\log N)^{\Re(\alpha)-1}\frac{\eps|c_0 g(s')\Theta(1)(\alpha-1)|}{|\Gamma(\alpha)|}.$$
Assume now $\eps$ of the form $\eps:=\delta/V$, with $V\geq 5$ sufficiently large in terms of $\kappa$ in order to make the above binomial series convergent and to be determined later on in terms of the other parameters. Collecting the above error terms together, thanks to \eqref{gammaassumpt} and taking $N\geq N_0$, with $N_0(\delta,\kappa,A_1,V)$ sufficiently large, we have obtained
\begin{align}
\label{finalestimatef(n)c_q(n)}
&\sum_{n\leq N}f(n)c_q(n)=-\frac{c_0}{\Gamma(\alpha)}(f\ast \mu)(s')H_q^{-1}(1)\Theta(1)\theta_{N,\alpha}(t)N(\log N)^{\alpha-1}\\
&+O\bigg(N(\log N)^{\Re(\alpha)-1}\frac{\eps|c_0 g(s')\Theta(1)(\alpha-1)|}{|\Gamma(\alpha)|}+d_{\kappa+1}(q)N(\log N)^{\kappa-1-A_1}(\log\log N)\bigg)\nonumber
\end{align}
where the big-Oh term may depend on $\delta,\kappa,A_1,D$ and the implicit constant in \eqref{mainstatistic1} and we define
$$\theta_{N,\alpha}(p):=1-f(p)\bigg(1-\frac{\log p}{\log N}\bigg)^{\alpha-1},$$
for any prime $p\leq N$. 
\section{A Mertens' type estimate with $\theta_{N,\alpha}$}
In this section we insert a lemma about $\theta_{N,\alpha}$ which will play a fundamental role in the next section, where we will produce a lower bound for \eqref{finalestimatef(n)c_q(n)} on average over $q$. Indeed, to do this, we will need to lower bound an expression involving $\theta_{N,\alpha}(t)$ on average over $t\in [N^{1/2-3\delta/4-\delta/V}, N^{1/2-3\delta/4-\delta/2V}].$ This is what we achieve next.
\begin{lem}
\label{lemhyposumovert}
Let $f(n):\mathbb{N}\longrightarrow \mathbb{C}$ be a generalized divisor function as in Definition \ref{defgendivfun}, for parameters $\alpha,\beta,\kappa,A_1,A_2$ satisfying \eqref{relationbetaA_1}. Then there exists a small $\delta_0=\delta_0(\kappa)$, such that either for $\delta\leq \delta_0$ or for $\delta/2$, we have
\begin{equation}
\label{hyposumovert}
\sum_{\substack{t\ \textrm{prime}:\\ N^{1/2-3\delta/4-\delta/V}\leq t\leq N^{1/2-3\delta/4-\delta/2V}}}\frac{|\theta_{N,\alpha}(t)(f(t)-1)|}{t}\geq \eta\beta\frac{\delta}{V},
\end{equation}
for a certain $\eta=\eta(\delta,\kappa)>0$, if $V$ large enough with respect to $\delta,\kappa,A_1,A_2$ and the implicit constants in \eqref{mainstatistic1}--\eqref{mainstatistic2} and $N$ sufficiently large in terms of all these parameters.
\end{lem}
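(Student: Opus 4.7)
The plan is to reduce the sum $S := \sum_{a\le t\le b}|\theta_{N,\alpha}(t)(f(t)-1)|/t$ to lower bounding one explicit algebraic quantity $\Phi(u_0)$ in $\alpha$, $\beta$ and the central parameter $u_0 := 1/2 + 3\delta/4$, and then to eliminate the possibility of its vanishing via the freedom to replace $\delta$ by $\delta/2$.

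First, I would apply Abel summation to the hypotheses \eqref{mainstatistic1} and \eqref{mainstatistic2} on the prime interval $[a,b] := [N^{1/2 - 3\delta/4 - \delta/V}, N^{1/2 - 3\delta/4 - \delta/(2V)}]$, to obtain
\[
\sum_{a\le t\le b}\frac{f(t)-1}{t}=(\alpha-1)\Lambda+o(\beta\Lambda),\qquad \sum_{a\le t\le b}\frac{|f(t)-1|^2}{t}=\beta\Lambda+o(\beta\Lambda),
\]
where $\Lambda := \log(\log b/\log a) \asymp \delta/V$, with errors absorbed via \eqref{relationbetaA_1} and the hypothesis $\beta \geq (\log N)^{\kappa(\alpha,\beta)-A_1}$, provided $V$ and $N$ are large. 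Since $|\theta(t)(f(t)-1)| = |\theta(t)\overline{(f(t)-1)}|$, the triangle inequality yields $S \geq \bigl|\sum_t \theta_{N,\alpha}(t)\overline{(f(t)-1)}/t\bigr|$. I would then expand $\theta(t)\overline{(f(t)-1)} = (\bar f - 1) - |f|^2 C_t + fC_t$ with $C_t := (1-\log t/\log N)^{\alpha-1}$, replace $C_t$ uniformly on $[a,b]$ by the central value $C_0 := u_0^{\alpha-1}$ (with error $O_\kappa(|\alpha-1|\delta/V)\cdot\Lambda$ from the mean value theorem, negligible for $V$ large), and evaluate the three resulting sums via the two asymptotics above together with the identity $|f|^2 = 1 + 2\mathrm{Re}(f-1) + |f-1|^2$. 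After simplification,
\[
\sum_{t\in[a,b]}\theta_{N,\alpha}(t)\overline{(f(t)-1)}/t = \Lambda\bigl[(\bar\alpha-1)(1-C_0) - C_0\beta\bigr] + o(\beta\Lambda).
\]
The lemma is thus reduced to proving $|\Phi(u_0)| \geq \eta\beta$ for at least one of $u_0 \in \{1/2+3\delta/4,\,1/2+3\delta/8\}$, where $\Phi(u_0) := (\bar\alpha-1)(1 - u_0^{\alpha-1}) - u_0^{\alpha-1}\beta$.

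Writing $w := \alpha - 1$, I would analyze $\Phi$ in two complementary regimes. For $|w|$ smaller than a small absolute constant $c_0$, Taylor expansion of $u_0^w = 1 + w\log u_0 + O(w^2)$ gives $\Phi(u_0) = -|w|^2\log u_0 - \beta + O(|w|\beta + |w|^3)$; since $u_0 > 1/2$ forces $|\log u_0| < \log 2 < 1$, and Lemma \ref{lemmaarithmquadrineq} gives $|w|^2 \leq \beta(1+o(1))$, one obtains $|\Phi(u_0)| \geq \beta(1 - \log 2)(1 - o(1))$ \emph{uniformly} in $u_0$, so no pigeonhole is needed and $\eta$ can be taken as $(1 - \log 2)/2$. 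For $|w| \geq c_0$, $\Phi$ is an analytic function of $u_0$ on $(1/2, 3/4)$ with derivative $\partial_{u_0}\Phi = -wu_0^{w-1}(\bar w + \beta)$. The only ``bad'' direction making this derivative vanish is $w = -\beta \in \mathbb{R}$, for which $\bar w + \beta = 0$, but a direct substitution gives $\Phi(u_0) \equiv -\beta$ identically in $u_0$ in this case, so $|\Phi| = \beta$ trivially; a perturbation around this point, controlled by $|w|^2 \leq \beta$, keeps $|\Phi| \geq \beta/2$. Away from this direction, $|\partial_{u_0}\Phi| \gtrsim_\kappa \beta$ throughout $u_0 \in [1/2, 3/4]$, and since $|u_0(\delta) - u_0(\delta/2)| = 3\delta/8$, the pigeonhole forces at least one of the two candidate $u_0$'s to lie at distance $\geq 3\delta/16$ from any hypothetical zero of $\Phi$ on this interval, yielding $|\Phi| \gtrsim_\kappa \delta\beta$ at that choice and hence the lemma with $\eta = \eta(\delta, \kappa) \asymp_\kappa \delta$.

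The main obstacle is ensuring the accumulated $o(\beta\Lambda)$ errors from the first paragraph actually fall below the target $\eta\beta\Lambda$ with $\eta \asymp_\kappa \delta$, particularly because the relative error $|\alpha-1|\delta/V$ from approximating $C_t$ by $C_0$ shrinks only with $V$. This is handled by choosing $V$ sufficiently large with respect to $\delta, \kappa, A_1, A_2$ and the implicit constants in the statistics, and by invoking the lower bound $\beta \geq (\log N)^{\kappa(\alpha,\beta) - A_1}$ from \eqref{relationbetaA_1} to push all polylogarithmic errors in $\log N$ coming from Abel summation well below $\beta\delta/V$ for $N$ large.
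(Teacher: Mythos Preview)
Your approach is genuinely different from the paper's and, at its core, cleaner. The paper argues by cases on $|\alpha-1|$; in the hard case $|\alpha-1|>\delta$ it partitions the primes in $I$ into three sets $\mathcal{A}_1=\{|\theta_{N,\alpha}(p)|\le\delta^5\}$, $\mathcal{A}_2=\{|f(p)-1|\le\delta^5\}$, $\mathcal{A}_3$ (the rest), covers $I$ dyadically, and shows that if $\mathcal{A}_3$ has density $<\delta^5$ on some dyadic block then the statistics \eqref{mainstatistic1}--\eqref{mainstatistic2} force a relation of the form $R(\alpha,\beta)\in[7\delta/5,8\delta/5]$, which the substitution $\delta\mapsto\delta/2$ then violates. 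Your route---drop to $|\sum_t\theta_{N,\alpha}(t)\overline{(f(t)-1)}/t|$, freeze $C_t$ at $C_0=u_0^{\alpha-1}$, and reduce to the single algebraic inequality $|\Phi(u_0)|\ge\eta\beta$---avoids the set decomposition entirely and makes the pigeonhole on $\delta$ versus $\delta/2$ transparent via the lower bound $|\Phi(u_0^{(1)})-\Phi(u_0^{(2)})|=|\bar w+\beta|\,|(u_0^{(1)})^w-(u_0^{(2)})^w|\gg_\kappa|\bar w+\beta|\,|w|\,\delta$. This is a nicer mechanism than the paper's.

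There is, however, one genuine gap. Your claimed error $O_\kappa(|\alpha-1|\delta/V)\cdot\Lambda$ from replacing $C_t$ by $C_0$ is \emph{not} negligible against the target $\eta\beta\Lambda$ uniformly in $\beta$: using $|\alpha-1|\ll\sqrt\beta$ from Lemma~\ref{lemmaarithmquadrineq} this error is $O(\sqrt\beta(\delta/V)\Lambda)$, and $\sqrt\beta(\delta/V)\ll\eta\beta$ forces $\delta/(V\eta)\ll\sqrt\beta$, which fails once $\beta$ drops below the fixed constant $(\delta/(V\eta))^2$---and \eqref{relationbetaA_1} allows $\beta$ as small as $(\log N)^{\kappa(\alpha,\beta)-A_1}\to 0$. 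The fix is to observe that the terms carrying $C_t$ in your expansion are $(-|f|^2+f)C_t=-f\,\overline{(f-1)}\,C_t$, so the replacement error is actually
\[
\Big|\sum_t f(t)\overline{(f(t)-1)}(C_t-C_0)/t\Big|\ \le\ \kappa\cdot O_\kappa(|\alpha-1|\delta/V)\sum_t\frac{|f(t)-1|}{t}\ \ll_\kappa\ |\alpha-1|\sqrt\beta\,(\delta/V)\,\Lambda\ \ll\ \beta(\delta/V)\Lambda,
\]
where the penultimate step is Cauchy--Schwarz together with \eqref{averageshortint1}--\eqref{averageshortint2}. This \emph{is} $o(\beta\Lambda)$ for $V$ large, uniformly in $\beta$, and the rest of your argument then goes through. (In your Case~2 sketch, the phrase ``controlled by $|w|^2\le\beta$'' is also slightly misleading: what you actually need there is to split at $|\bar w+\beta|\le\rho\beta$ for a small $\rho=\rho(\kappa)$; inside, $\Phi(u_0)=-\beta+\overline{(w+\beta)}(1-u_0^w)$ gives $|\Phi|\ge\beta/2$ directly, and outside, the difference bound above yields $\max_i|\Phi(u_0^{(i)})|\gg_\kappa\beta\delta$.)
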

\begin{proof}
We split the proof into three main cases. First of all, if $\alpha=1$ then
$$\theta_{N,1}(t)=1-f(t)$$
and the sum \eqref{hyposumovert} reduces to 
\begin{align*}
\sum_{\substack{t\ \textrm{prime}:\\ N^{1/2-3\delta/4-\delta/V}\leq t\leq N^{1/2-3\delta/4-\delta/2V}}}\frac{|f(t)-1|^2}{t}&=\int_{N^{1/2-3\delta/4-\delta/V}}^{N^{1/2-3\delta/4-\delta/2V}}\frac{d(\beta t+R(t))}{t\log t}\\
&=\beta\log\bigg(1+\frac{\delta/2V}{1/2-3\delta/4-\delta/V}\bigg)+\int_{N^{1/2-3\delta/4-\delta/V}}^{N^{1/2-3\delta/4-\delta/2V}}\frac{d(R(t))}{t\log t}\\
&=\beta\log\bigg(1+\frac{\delta/2V}{1/2-3\delta/4-\delta/V}\bigg)+O((\log N)^{-A_2}),
\end{align*}
by partial summation from \eqref{mainstatistic2}, where $R(t)=O(t(\log t)^{-A_2})$ and the implicit constant in the big-Oh error term may depend on $\kappa,A_2$ and that of \eqref{mainstatistic2}.

By taking $N,V$ sufficiently large, $\delta$ small enough and thanks to \eqref{relationbetaA_1}, the above reduces to 
$$\beta\bigg(\frac{\delta}{V}(1+O(\delta))+O\bigg(\frac{\delta}{V}\bigg)^2\bigg)+O\bigg(\frac{\beta\delta^2}{V}\bigg)=\frac{\beta\delta}{V}+O\bigg(\frac{\beta\delta^2}{V}\bigg),$$
where now the implicit constant in the big-Oh error term is absolute. It is clear that now \eqref{hyposumovert} follows with $\eta=1/2$, say, by choosing $\delta$ suitably small. 

In particular, we have proved that
\begin{align}
\label{averageshortint1}
&\sum_{\substack{t\ \textrm{prime}:\\ N^{1/2-3\delta/4-\delta/V}\leq t\leq N^{1/2-3\delta/4-\delta/2V}}}\frac{|f(t)-1|^2}{t}=\frac{\beta\delta}{V}+O\bigg(\frac{\beta\delta^2}{V}\bigg),\\
\label{averageshortint2}
&\sum_{\substack{t\ \textrm{prime}:\\ N^{1/2-3\delta/4-\delta/V}\leq t\leq N^{1/2-3\delta/4-\delta/2V}}}\frac{1}{t}=\frac{\delta}{V}+O\bigg(\frac{\delta^2}{V}\bigg),
\end{align}
where \eqref{averageshortint2} follows as a special case of \eqref{averageshortint1}.

Let us now assume $\alpha\neq 1$ and $|\alpha-1|\leq \delta$. Then we can Taylor expand $\theta_{N,\alpha}(t)$ as 
\begin{align*}
\theta_{N,\alpha}(t)&=1-f(t)(1+(\alpha-1)\log(1/2+3\delta/4+\theta(t))+O(|\alpha-1|^2))\\
&=1-f(t)-f(t)(\alpha-1)\log(1/2+3\delta/4+\theta(t))+O_\kappa(|\alpha-1|^2)\\
&=(1-f(t))(1+(\alpha-1)\log(1/2+O(\delta)))-(\alpha-1)\log(1/2+O(\delta))+O_\kappa(|\alpha-1|^2),
\end{align*}
where $\theta(t)\in [\delta/V,\delta/2V]$ is defined by 
$t=N^{1/2-3\delta/4-\theta(t)}$ and we take $V\geq 5$ and $\delta$ small.

Inserting this into \eqref{hyposumovert} and using the triangle inequality, we get a lower bound for \eqref{hyposumovert} of
\begin{align*}
&\geq |1+(\alpha-1)\log(1/2+O(\delta))|\sum_{\substack{t\ \textrm{prime}:\\ N^{1/2-3\delta/4-\delta/V}\leq t\leq N^{1/2-3\delta/4-\delta/2V}}}\frac{|f(t)-1|^2}{t}\\
&-|(\alpha-1)\log(1/2+O(\delta))|\sum_{\substack{t\ \textrm{prime}:\\ N^{1/2-3\delta/4-\delta/V}\leq t\leq N^{1/2-3\delta/4-\delta/2V}}}\frac{|f(t)-1|}{t}\\
&+O_\kappa\bigg(|\alpha-1|^2\sum_{\substack{t\ \textrm{prime}:\\ N^{1/2-3\delta/4-\delta/V}\leq t\leq N^{1/2-3\delta/4-\delta/2V}}}\frac{|f(t)-1|}{t}\bigg).
\end{align*}
By Cauchy--Schwarz inequality and equations \eqref{averageshortint1}--\eqref{averageshortint2} we immediately deduce
\begin{equation}
\label{averageshortint3}
\sum_{\substack{t\ \textrm{prime}:\\ N^{1/2-3\delta/4-\delta/V}\leq t\leq N^{1/2-3\delta/4-\delta/2V}}}\frac{|f(t)-1|}{t}\leq \frac{\sqrt{\beta}\delta}{V}+O\bigg(\frac{\sqrt{\beta}\delta^2}{V}\bigg)
\end{equation}
and by Lemma \ref{lemmaarithmquadrineq}, taking $N$ sufficiently large with respect to $\delta,\kappa,A_1,A_2$ and the implicit constants \eqref{mainstatistic1}--\eqref{mainstatistic2}, we also have
\begin{equation}
\label{consequencearithmquadrineq}
|\alpha-1|\leq \sqrt{\beta}+O(\sqrt{\beta}\delta),
\end{equation}
thanks to \eqref{relationbetaA_1}. Hence, using \eqref{averageshortint1} and \eqref{averageshortint3} we can further lower bound \eqref{hyposumovert} with
\begin{align*}
&\geq |1+(\alpha-1)\log(1/2+O(\delta))|\bigg(\frac{\beta\delta}{V}+O\bigg(\frac{\beta\delta^2}{V}\bigg)\bigg)\\
&-|(\alpha-1)\log(1/2+O(\delta))|\bigg(\frac{\sqrt{\beta}\delta}{V}+O\bigg(\frac{\sqrt{\beta}\delta^2}{V}\bigg)\bigg)\\
&+O_\kappa\bigg(|\alpha-1|^2\bigg(\frac{\sqrt{\beta}\delta}{V}+O\bigg(\frac{\sqrt{\beta}\delta^2}{V}\bigg)\bigg)\bigg),
\end{align*}
which thanks to \eqref{consequencearithmquadrineq} and the hypothesis that $|\alpha-1|\leq \delta$ becomes
\begin{align*}
&\geq \frac{\beta\delta}{V}(1+O(\delta))-\log(2+O(\delta))\frac{\beta\delta}{V}(1+O(\delta))+O_\kappa\bigg(\frac{\beta\delta^2}{V}(1+O(\delta))\bigg)\\
&=\frac{\beta\delta}{V}(1-\log(2+O(\delta))+O(\delta)),
\end{align*}
which proves the lemma with $\eta=1/10$, say, if we take $\delta$ small enough.

Finally, we are left with the case $|\alpha-1|>\delta$. In this case we split the set of prime numbers into three sets:
\begin{align*}
&\mathcal{A}_1:=\{p: |\theta_{N,\alpha}(p)|\leq \delta^5\}\\
&\mathcal{A}_2:=\{p: |f(p)-1|\leq \delta^5\}\\
&\mathcal{A}_3:=\{p: |\theta_{N,\alpha}(p)|> \delta^5,|f(p)-1|>\delta^5 \}.
\end{align*}
\begin{rmk}
We expect the set $\mathcal{A}_3$, i.e. the set of primes where $\theta_{N,\alpha}$ and $f$ are respectively bounded away from $0$ and $1$, to contain a positive proportion of primes, at least on a small scale. Indeed, their complementary conditions should force $\alpha$ to be either very close to $1$ (which case we handled before) or very close to $2$, in which case we will succeed by adjusting the value of $\delta$.
\end{rmk}

We cover the interval $I:=[N^{1/2-3\delta/4-\delta/V}, N^{1/2-3\delta/4-\delta/2V}]$ with dyadic subintervals 
$$I=I'\cup \bigcup_{k=0}^{\lfloor \frac{\delta \log N}{2V\log 2} \rfloor-1} [N^{1/2-3\delta/4-\delta/V}2^k,N^{1/2-3\delta/4-\delta/V}2^{k+1}),$$
with $I'$ the possible rest of the above dyadic dissection. However, since we are looking for just a lower bound for \eqref{hyposumovert}, we can forget about $I'$.

Let us first suppose that for any $[x,2x)$ in the above union we have
$$|\mathcal{A}_3\cap [x,2x)|\geq \delta^5\frac{x}{\log x}.$$
Hence, in accordance with the prime number theorem, we are asking for a proportion of at least $\delta^5$ primes in the intersection $\mathcal{A}_3\cap [x,2x)$, for \emph{any} such $x$. From here it is easy to conclude, since \eqref{hyposumovert} will follow with a constant $\eta$ proportional to $\delta^{15}/(\kappa+1)^2$, since $\beta\leq (\kappa+1)^2$. 

Suppose now that there exists an interval 
\begin{equation}
\label{intcontradiction}
[x,2x):=[N^{1/2-3\delta/4-\delta/V}2^k,N^{1/2-3\delta/4-\delta/V}2^{k+1}),
\end{equation}
for a certain $k=0,\dots,\lfloor (\delta \log N)/(2V\log 2) \rfloor-1$, for which 
$$|\mathcal{A}_3\cap [x,2x)|< \delta^5 \frac{x}{\log x}.$$
This clearly implies that
$$|(\mathcal{A}_1\cup \mathcal{A}_2)\cap [x,2x)|\geq (1-\delta^5) \frac{x}{\log x}$$
and we let
$$|\mathcal{A}_1\cap [x,2x)|=d_1 \frac{x}{\log x},$$
for a certain $d_1\in [0,1]$. 
\begin{rmk}
One specific dyadic interval does not in general supply us with enough information on a function $f$ verifying \eqref{mainstatistic1}--\eqref{mainstatistic2} for single fixed values of $x$. However, we ask statistics \eqref{mainstatistic1}--\eqref{mainstatistic2} to hold uniformly on $2\leq x\leq N$. This imposes a \emph{rigidity} on the distribution of $f$ along the prime numbers, from which the ``local'' behaviour of $f$ is determined by the ``global'' one. In particular, the information that $f$ on average over all the primes smaller than \emph{any} $x\leq N$ is roughly $\alpha$, which we are now supposing to be bounded away from $1$, forces $f$ to be on \emph{any} dyadic interval $[x,2x)$, for large $x$, not too close to $1$, apart for a small proportion of primes. This, together with some structural information on $f$ over the primes that will be deducted from the definition of the sets $\mathcal{A}_1$ and $\mathcal{A}_2$, will negate the assumption that the almost totality of primes lies now in the union $\mathcal{A}_1\cup\mathcal{A}_2$.
\end{rmk}

We note that for primes in $\mathcal{A}_2$ we have
$$f(p)=1+O(\delta^5)$$
and for those in $\mathcal{A}_1$ we have instead
\begin{align*}
f(p)=\bigg(\frac{1}{2}+\frac{3\delta}{4}+\theta(p)\bigg)^{1-\alpha}+O(\delta^5)=\bigg(\frac{1}{2}+\frac{3\delta}{4}\bigg)^{1-\alpha}+O_\kappa(\delta^5+\delta/V).
\end{align*}
Therefore, from \eqref{mainstatistic1} and choosing $V=V(\delta,\kappa)$ sufficiently large, we get
\begin{align*}
\alpha x+O\bigg(\frac{x}{(\log x)^{A_1}}\bigg)&=\sum_{p\in (\mathcal{A}_1\cup \mathcal{A}_2\cup \mathcal{A}_3)\cap [x,2x)}f(p)\log p\\
&=\sum_{p\in \mathcal{A}_1\cap [x,2x)}\bigg(\bigg(\frac{1}{2}+\frac{3\delta}{4}\bigg)^{1-\alpha}+O_\kappa(\delta^5)\bigg)\log p+\sum_{p\in (\mathcal{A}_2\setminus \mathcal{A}_1)\cap [x,2x)}f(p)\log p\\
&+\sum_{p\in \mathcal{A}_3\cap [x,2x)}f(p)\log p\\
&=\sum_{p\in \mathcal{A}_1\cap [x,2x)}\bigg(\frac{1}{2}+\frac{3\delta}{4}\bigg)^{1-\alpha}\log p+\sum_{p\in (\mathcal{A}_2\setminus \mathcal{A}_1)\cap [x,2x)}\log p+O(\delta^5 x),
\end{align*}
from which, since $|(\mathcal{A}_1\cup \mathcal{A}_2)\cap [x,2x)|=(1+O(\delta^5))x/\log x$, we deduce that
\begin{align*}
\alpha x+O\bigg(\frac{x}{(\log x)^{A_1}}\bigg)&=\sum_{p\in \mathcal{A}_1\cap [x,2x)}\bigg(\bigg(\frac{1}{2}+\frac{3\delta}{4}\bigg)^{1-\alpha}-1\bigg)\log p+\sum_{p\in (\mathcal{A}_1\cup \mathcal{A}_2)\cap [x,2x)}\log p+O(\delta^5 x)\\
&=\bigg(\bigg(\frac{1}{2}+\frac{3\delta}{4}\bigg)^{1-\alpha}-1\bigg)\log(x+O(1))d_1\frac{x}{\log x}\\
&+\log(x+O(1))(1+O(\delta^5))\frac{x}{\log x}+O(\delta^5 x)\\
&=\bigg(\bigg(\frac{1}{2}+\frac{3\delta}{4}\bigg)^{1-\alpha}-1\bigg)d_1 x+x+O_\kappa(\delta^5 x+1/\log x).
\end{align*}
Finally, since $x\in [N^{1/2-3\delta/4-\delta/V},N^{1/2-3\delta/4-\delta/2V})$, by choosing $N$ sufficiently large with respect to $\delta,\kappa,A_1$ and the implicit constant in \eqref{mainstatistic1}, and dividing through by $x$, we conclude that
\begin{equation}
\label{contradictionfclose1}
\alpha-1+O_\kappa(\delta^5)=\bigg(\bigg(\frac{1}{2}+\frac{3\delta}{4}\bigg)^{1-\alpha}-1\bigg)d_1.
\end{equation}
Similar computations, but working with \eqref{mainstatistic2} instead, lead to
\begin{equation}
\label{contradictionfclose2}
\beta+O_\kappa(\delta^5)=\bigg|\bigg(\frac{1}{2}+\frac{3\delta}{4}\bigg)^{1-\alpha}-1\bigg|^2 d_1,
\end{equation}
if now $N$ is also sufficiently large with respect to $A_2$ and the implicit constant in \eqref{mainstatistic2}.

By substituting the value of $d_1$ from \eqref{contradictionfclose1} into \eqref{contradictionfclose2}, we find
\begin{equation}
\label{finalcontradiction0}
\beta+O_\kappa(\delta^5)=(\alpha-1)\bigg(\bigg(\overline{\frac{1}{2}+\frac{3\delta}{4}\bigg)^{1-\alpha}}-1\bigg).
\end{equation}
By dividing through by $\alpha-1$, remembering that $\delta<|\alpha-1|\leq \kappa+1$, and taking the conjugate, we can rewrite the above as
\begin{equation*}
\frac{\beta}{\bar{\alpha}-1}+1=\bigg(\frac{1}{2}+\frac{3\delta}{4}\bigg)^{1-\alpha}+O_\kappa(\delta^4).
\end{equation*}
If the left hand side in the above equation vanishes, we have
$$ \bigg(\frac{1}{2}+\frac{3\delta}{4}\bigg)^{1-\alpha}=O_\kappa(\delta^4),$$
which already leads to a contradiction, since $\delta$ can be chosen sufficiently small with respect to $\kappa$. Otherwise, we can pass to the logarithm on both sides and deduce that
$$\frac{1}{1-\alpha}\log\bigg(\frac{\beta}{\bar{\alpha}-1}+1\bigg)=\log\bigg(\frac{1}{2}+\frac{3\delta}{4}\bigg)+O_{\kappa}(\delta^3).$$
By Taylor expanding the logarithmic factor on the right hand side above as
\begin{equation*}
\log\bigg(\frac{1}{2}+\frac{3\delta}{4}\bigg)=-\log 2+\frac{3\delta}{2}+O(\delta^2),
\end{equation*}
and considering $\delta$ small enough in terms of $\kappa$, we finally get
\begin{equation*}
\frac{1}{1-\alpha}\log\bigg(\frac{\beta}{\bar{\alpha}-1}+1\bigg)+\log 2=\frac{3\delta}{2}+O(\delta^2).
\end{equation*}
A consequence of this is that, by shrinking $\delta$ if necessary, we should have:
\begin{equation}
\label{finalcontradiction}
R(\alpha,\beta):=\bigg|\frac{1}{1-\alpha}\log\bigg(\frac{\beta}{\bar{\alpha}-1}+1\bigg)+\log 2\bigg|\in \bigg[\frac{7\delta}{5}, \frac{8\delta}{5}\bigg].
\end{equation}
Now, we either have $R(\alpha, \beta)=0$ and \eqref{finalcontradiction} fails for any $\delta>0$ or it does by possibly replacing $\delta$ with $\delta/2$. In both cases we reach a contradiction, thus concluding the proof of the lemma.
\end{proof}
\section{The lower bound for the variance}
\subsection{Collecting the main results}
Plugging \eqref{finalestimatef(n)c_q(n)} into \eqref{startingpoint1}, we find for $V(Q,f)$ a lower bound of:
\begin{align}
\label{finalestimlowerboundvariance1}
&\gg QN(\log N)^{-\beta+2(\Re(\alpha)-1)}\bigg(\sum_{KQ_0\leq q\leq RN^{-\delta/4}}^{'}\bigg(\bigg|\frac{c_0}{\Gamma(\alpha)}\bigg| h_1(q)+R_{\alpha}(N,q)\bigg)+E(N)\bigg)^{2}\\
&+O_\kappa\bigg(\frac{N^2(\log N)^{\kappa^2+4\kappa+2}}{Q_0}+\frac{N^2(\log N)^{\beta+2\Re(\alpha)-2}}{Q_0}\bigg),\nonumber
\end{align}
where we let 
\begin{align*}
&E(N):=O_{\delta,\kappa}\bigg(\frac{(\log N)^{-\Re(\alpha)+1}}{N^{\delta/11}}\bigg)\\
&R_{\alpha}(N,q):=O_{\delta,\kappa,A_1,A_2,D}\bigg(\frac{\eps|c_0(\alpha-1)|h_2(q)}{|\Gamma(\alpha)|}+\frac{h_3(q)(\log\log N)}{(\log N)^{-\kappa+\Re(\alpha)+A_1}}\bigg),
\end{align*}
with
\begin{align}
\label{defh}
& h_1(q):=\frac{|g(q)g(s')H_q^{-1}(1)\Theta(1)\theta_{N,\alpha}(t)|}{q};\\
& h_2(q):=\frac{|g(q)g(s')\Theta(1)|}{q};\nonumber\\ 
& h_3(q):=\frac{|g(q)|d_{\kappa+1}(q)}{q}.\nonumber
\end{align}
Here $\sum^{'}$ indicates a sum over all numbers $q$ satisfying restrictions $(1)$ to $(6)$ and the $\gg$ constant may depend on $\delta,\kappa,A_1,A_2,D$ and the implicit constants in \eqref{mainstatistic1}--\eqref{mainstatistic2}.
\subsection{The sum of $R_{\alpha}(N,q)$}
We can easily estimate the sum of $R_{\alpha}(N,q)$ by using Lemma \ref{lemrankinestimate}. For the sum involving $h_3$ the contribution will be 
$$\ll_\kappa(\log N)^{(\kappa+1)^2+\kappa-\Re(\alpha)-A_1}(\log\log N).$$
Regarding the sum involving $h_2$ instead, it may be bounded by
\begin{align}
\label{sumofh2}
\ll |c_0|\frac{\eps|\alpha-1|}{|\Gamma(\alpha)|}\sum_{\substack{s'\leq N^{\eps}\\ p|s'\Rightarrow p>(\log N)^{B}}}\frac{|g(s')|^2}{s'}\sum_{\substack{t\ \textrm{prime}:\\ t\in I_{\delta}(N)}}\frac{|g(t)|}{t}\sum_{\substack{s\leq N^{1/2}\\ p|s\Rightarrow p\leq (\log N)^{B}}}\frac{|\Theta(1)g(s)|}{s},
\end{align}
where $I_\delta(N):=[N^{1/2-3\delta/4-\delta/V},N^{1/2-3\delta/4-\delta/2V}]$.

Now, observe from \eqref{formoftheta1} that $|\Theta(1)|\leq \prod_{p|s}(|g(p)|+O_\kappa(1/p))$ and trivially $|g(s)|= \prod_{p|s}|g(p)|$. Hence, by Rankin's trick the innermost sum in \eqref{sumofh2} is 
\begin{align*}
\leq \prod_{\substack{p\leq (\log N)^{B}}}\bigg(1+\frac{|g(p)|^2}{p}+O_{\kappa}\bigg(\frac{1}{p^2}\bigg)\bigg)\asymp_{\kappa}\prod_{\substack{p\leq (\log N)^{B}}}\bigg(1+\frac{|g(p)|^2}{p}\bigg).
\end{align*} 

Regarding the sum over $s'$, arguing similarly and since $(\log N)^{B}\leq N^{\eps}$, if we take $N$ large enough with respect to $\eps$ and $A_1$, we see it is
$$\ll_\kappa \prod_{(\log N)^{B}<p\leq N^{\eps}}\bigg(1+\frac{|g(p)|^2}{p}\bigg)=\frac{\prod_{p\leq N^{\eps}}\bigg(1+\frac{|g(p)|^2}{p}\bigg)}{\prod_{p\leq (\log N)^{B}}\bigg(1+\frac{|g(p)|^2}{p}\bigg)}\ll \frac{\eps^{\beta}(\log N)^{\beta}}{\prod_{p\leq (\log N)^{B}}\bigg(1+\frac{|g(p)|^2}{p}\bigg)},$$
by partial summation from \eqref{mainstatistic2}, made possible thanks to the hypothesis \eqref{relationbetaA_1} on $A_2$. Here the implicit constant depends on $\kappa,A_1,A_2$ and the implicit constants in  \eqref{mainstatistic1}--\eqref{mainstatistic2} and we take $N$ large enough with respect to these parameters. 

Finally we come to the sum over the primes $t$. By Cauchy--Schwarz and equations \eqref{averageshortint1}--\eqref{averageshortint2} it is $\ll\sqrt{\beta}\eps.$ 

Hence, overall we get a bound for \eqref{sumofh2} of
$$\ll \eps^{2+\beta}\frac{|c_0(\alpha-1)|}{|\Gamma(\alpha)|}\sqrt{\beta}(\log N)^{\beta}\ll |c_0|\frac{\beta\eps^{2+\beta}}{|\Gamma(\alpha)|}(\log N)^{\beta},$$ 
where we used $|\alpha-1|\ll \sqrt{\beta}$ from Lemma \ref{lemmaarithmquadrineq} and where again the implicit constant depends on $\kappa,A_1,A_2$ and those in \eqref{mainstatistic1}--\eqref{mainstatistic2} and we take $N$ sufficiently large with respect to these parameters. 
\begin{rmk}
It is essential here to relate $\alpha-1$ to $\sqrt{\beta}$ by means of the tight bound supplied by Lemma \ref{lemmaarithmquadrineq}, otherwise the above error coming from the sum involving $h_2$ could potentially overcome the main term coming from the sum of $h_1$. 
\end{rmk}
\subsection{The main term}
By expanding the definition of $h_1(q)$ and all the conditions $q$ is subject to, we see that the precise shape of $\Sigma^{'}_q h_1(q)$ is
\begin{equation}
\label{preciseshapesum}
\sum_{\substack{s'\leq N^{\eps}\\ p|s'\Rightarrow p>(\log N)^{B},\\ p|s'\Rightarrow |g(p)|>(\log\log N)^{-1/2}\\ s'\in\mathcal{A}'\\ s'\ \textrm{squarefree}}}\frac{|g(s')|^2|H_{s'}^{-1}(1)|}{s'}\sum_{\substack{t\ \textrm{prime}:\\ N^{1/2-3\delta/4-\eps}\leq t\leq N^{1/2-3\delta/4-\eps/2}\\ f(t)\neq 1}}\frac{|\theta_{N,\alpha}(t)g(t)H_t^{-1}(1)|}{t}
\end{equation}  
$$\times\sum_{\substack{KQ_0/ts'\leq s\leq RN^{-\delta/4}/ts'\\ p|s\Rightarrow C<p\leq (\log N)^B,\ p>C/|g(p)|,\ |g(p)|>(\log\log N)^{-1/2}\\ \omega(tss')\leq A\log\log N\\ tss'\in\mathcal{A}\\ s\ \textrm{squarefree}}}\frac{|\Theta(1)g(s)H_s^{-1}(1)|}{s}.$$ 
We now insert a series of observations to simplify its estimate. 
\subsection{Removal of some extra conditions}
To begin with, by Lemma \ref{lemderivativeseulerproduct} we have 
$$|H_q^{-1}(1)|\gg_{\kappa,D} 1.$$
In the following we then replace $h_1(q)$ with the value in $\eqref{defh}$ without the factor $H_q^{-1}(1).$ 

Let us now focus on the condition $(2)$. To this aim we note that
$$\sum_{\substack{KQ_0\leq q\leq RN^{-\delta/4}\\ \omega(q)\leq A\log\log N}}^{'}h_1(q)=\sum_{KQ_0\leq q\leq RN^{-\delta/4}}^{'}h_1(q)-\sum_{\substack{KQ_0\leq q\leq RN^{-\delta/4}\\ \omega(q)> A\log\log N}}^{'}h_1(q),$$
where now $\sum^{'}$ indicates the sum over all of the other remaining restrictions on $q$. The last sum on the right hand side above can be upper bounded by
$$\ll \frac{1}{(\log N)^{A}}\sum_{q\leq N}h_1(q)e^{\omega(q)}.$$
Using again 
$$|\Theta(1)|\leq \prod_{p|s}\bigg(|g(p)|+O_\kappa\bigg(\frac1{p}\bigg)\bigg)$$
and 
$$|g(s')|= \prod_{p|s'}|g(p)|,$$
as well as the trivial 
$$|\theta_{N,\alpha}(t)|\ll_{\kappa} 1,$$
we are left to estimate
$$\frac{1}{(\log N)^A}\sum_{\substack{t\leq \sqrt{N}\\ t\ \textrm{prime}}}\frac{1}{t}\sum_{q'\leq N}\frac{\prod_{p|q'}(e(\kappa+1)^2+O(1/p))}{q'}.$$
This can be done by means of Lemma \ref{lemrankinestimate} and Mertens' theorem and the result will be 
$$\ll_{\kappa}(\log\log N)(\log N)^{e(\kappa+1)^2-A}.$$
So far, if we collect together all the error terms inside the parenthesis in \eqref{finalestimlowerboundvariance1}, we have got an overall error of
\begin{align}
\label{finalerror}
&\ll(\log\log N)(\log N)^{e(\kappa+1)^2-A}+(\log N)^{(\kappa+1)^2+\kappa-\Re(\alpha)-A_1}(\log\log N)\\
&+|c_0|\frac{\beta\eps^{2+\beta}}{|\Gamma(\alpha)|}(\log N)^{\beta}+\frac{(\log N)^{-\Re(\alpha)+1}}{N^{\delta/11}}\nonumber\\
&\ll (\log N)^{(\kappa+1)^2+\kappa-\Re(\alpha)-A_1}(\log\log N)+|c_0|\frac{\beta\eps^{2+\beta}}{|\Gamma(\alpha)|}(\log N)^{\beta}\nonumber,
\end{align}
choosing 
$$A:=A_1+e(\kappa+1)^2+1$$
and taking $N$ sufficiently large in terms of $\delta$ and $\kappa$, where the implicit constant above depends on $\delta,\kappa,A_1,A_2,D$ and those in  \eqref{mainstatistic1}--\eqref{mainstatistic2}. 

We now concentrate on the condition $(6)$. It is certainly equivalent to
$$\frac{(\log t)^{A_1+1}}{t^{3/4}}+\sum_{p|s}\frac{(\log p)^{A_1+1}}{p^{3/4}}+\sum_{p|s'}\frac{(\log p)^{A_1+1}}{p^{3/4}}\leq D.$$
Since $t$ is extremely large and all the primes dividing $s'$ are at least $(\log N)^{B}$, with $B=4(K+2)=4(A_1+2)$, it is actually equivalent to the fact that the corresponding sum over the prime factors of $s$ must be slightly smaller than $D$. So we can lower bound \eqref{preciseshapesum} with the same expression but having the innermost sum switched with that over those numbers $s$ satisfying:
$$\sum_{p|s}\frac{(\log p)^{A_1+1}}{p^{3/4}}\leq D-1.$$
Now, this is the complete sum minus that under the complementary condition. This last one is upper bounded by
\begin{align*}
&\leq\sum_{\substack{KQ_0/ts'\leq s\leq RN^{-\delta/4}/ts'\\ p|s\Rightarrow C<p\leq (\log N)^B\\ \sum_{p|s}\frac{(\log p)^{A_1+1}}{p^{3/4}}>D-1}}\frac{\prod_{p|s}(|g(p)|^2+O(1/p))}{s}\\
&\leq \sum_{\substack{C<r\leq (\log N)^B\\ r\ \textrm{prime}}}\frac{(\log r)^{A_1+1}}{(D-1)r^{3/4}}\sum_{\substack{s\leq RN^{-\delta/4}\\ p|s\Rightarrow C<p\leq (\log N)^B\\ r|s}}\frac{\prod_{p|s}(|g(p)|^2+O(1/p))}{s}\\
&\ll_\kappa \sum_{\substack{r\leq (\log N)^B}}\frac{(\log r)^{A_1+1}}{(D-1)r^{7/4}}\sum_{\substack{s\leq RN^{-\delta/4}\\ p|s\Rightarrow C<p\leq (\log N)^B}}\frac{\prod_{p|s}(|g(p)|^2+O(1/p))}{s}\\
&\ll_\kappa \sum_{\substack{r\leq (\log N)^B}}\frac{(\log r)^{A_1+1}}{(D-1)r^{7/4}}\prod_{\substack{C<p\leq (\log N)^B}}\bigg(1+\frac{|g(p)|^2+O_\kappa(1/p)}{p}\bigg)\\
&\ll_{A_1}\frac{1}{D-1}\prod_{\substack{C<p\leq (\log N)^B}}\bigg(1+\frac{|g(p)|^2+O_\kappa(1/p)}{p}\bigg),
\end{align*}
by Rankin's trick. By the arbitrariness of $D=D(\kappa,A_1)$, this term will be negligible. Indeed, we will now show that the complete sum over $s$ contributes
$$\gg \prod_{\substack{C<p\leq (\log N)^B}}\bigg(1+\frac{|g(p)|^2+O_\kappa(1/p)}{p}\bigg).$$
\subsection{The estimate of the sum over $s$}
We start by setting the value of $Q_0$ as 
$$Q_0:=\frac{N(\log N)^{\eta_0}}{Q\beta^2},$$
where
$$\eta_0:=(\kappa+2)^2-\beta-2(\Re(\alpha)-1)+3=(\kappa+1)^2-\beta+2(\kappa-\Re(\alpha))+8\geq 8$$
and $\beta$ is as in the statement of Theorem \ref{thmalpha}. Note that this choice satisfies the conditions in \eqref{eq0} and Lemma \ref{lemparseval-shiu}, if $N$ large enough in terms of $\delta,\kappa$ and $A_1$. By condition \eqref{relationbetaA_1}, we deduce that 
$$\frac{KQ_0}{ts'}\ll \frac{(\log N)^{\eta_0} N^{-\delta/4+\eps}}{\beta^2}\leq N^{-\delta/4+\eps}(\log N)^{\eta_0+2(A_1-\kappa(\alpha,\beta))},$$
with 
$$\kappa(\alpha,\beta)=(\kappa+1)^2+\kappa-\Re(\alpha)-\beta+4.$$
Thus, recalling that $\eps=\delta/V$, with $V\geq 5$, and taking $N$ large enough in terms of $\delta,\kappa$ and $A_1$, we have $KQ_0/ts'<1$. Thanks to this the sum over $s$ becomes a sum over a \emph{long} interval, which heavily simplifies its computation. In particular, it coincides with
$$\sum_{\substack{s\leq RN^{-\delta/4}/ts'\\ p|s\Rightarrow C<p\leq (\log N)^B,\ p>C/|g(p)|,\ |g(p)|>(\log\log N)^{-1/2}}}\frac{|g(s)\prod_{p|s}(g(p)+O(1/p))|}{s}.$$
Applying Lemma \ref{lemrankinestimate} we find it is 
$$\gg_{\kappa}\prod_{\substack{C<p\leq \min\{RN^{-\delta/4}/ts',(\log N)^B\}\\ p>C/|g(p)|,\ |g(p)|>(\log\log N)^{-1/2}}}\bigg(1+\frac{|g(p)|^2+O_\kappa(1/p)}{p}\bigg).$$
We restrict now the sum over $s'$ to those numbers $\leq N^{\eps/W}$, for a certain $W\geq 3$ to determine later. In this way, it is immediate to check that 
$$\frac{RN^{-\delta/4}}{ts'}=\frac{N^{1/2-3\delta/4}}{ts'}\geq \frac{N^{1/2-3\delta/4}}{N^{1/2-3\delta/4-\eps/2+\eps/W}}= \exp\bigg(\eps\bigg(\frac{1}{2}-\frac{1}{W}\bigg)\log N\bigg)\geq(\log N)^{B},$$
for $N$ large enough with respect to $\eps$ and $A_1$. Thus the product above is indeed only over the prime numbers $C<p\leq (\log N)^B$ and it equals $P_1/P_2$, where
\begin{align*}
&P_1:=\prod_{\substack{C<p\leq (\log N)^B\\ p>C/|g(p)|}}\bigg(1+\frac{|g(p)|^2+O_\kappa(1/p)}{p}\bigg),\\
&P_2:=\prod_{\substack{C<p\leq (\log N)^B\\ p>C/|g(p)|,\\ |g(p)|\leq(\log\log N)^{-1/2}}}\bigg(1+\frac{|g(p)|^2+O_\kappa(1/p)}{p}\bigg).
\end{align*}
However $P_2$ is of bounded order, since
\begin{equation*}
\sum_{\substack{C<p\leq (\log N)^B\\ p>C/|g(p)|,\ |g(p)|\leq(\log\log N)^{-1/2}}}\frac{|g(p)|^2+O_\kappa(1/p)}{p}\leq \sum_{\substack{p\leq (\log N)^B\\ |g(p)|\leq(\log\log N)^{-1/2}}}\bigg(\frac{|g(p)|^2}{p}+O_\kappa \bigg(\frac{1}{p^2}\bigg)\bigg)\ll_\kappa 1,
\end{equation*}
by Mertens' theorem, if $N$ large compared to $\kappa$ and $A_1$. Regarding $P_1$ instead, it coincides with $P_3/P_4$, where 
\begin{align*}
&P_3:=\prod_{\substack{C<p\leq (\log N)^B}}\bigg(1+\frac{|g(p)|^2+O_\kappa(1/p)}{p}\bigg),\\
&P_4:=\prod_{\substack{C<p\leq (\log N)^B\\ p\leq C/|g(p)|}}\bigg(1+\frac{|g(p)|^2+O_\kappa(1/p)}{p}\bigg).
\end{align*}
As before, one can show that $P_4$ is bounded, which makes the sum over $s$ at least of order
\begin{align*}
\gg_{\kappa} \prod_{\substack{C<p\leq (\log N)^B}}\bigg(1+\frac{|g(p)|^2+O_\kappa(1/p)}{p}\bigg)\asymp_{\kappa,C}\prod_{\substack{C<p\leq (\log N)^B}}\bigg(1+\frac{|g(p)|^2}{p}\bigg).
\end{align*}
\subsection{The estimate of the sum over $t$}
We remind that $\eps=\delta/V$ and we assume $\delta,N$ and $V$ to be as in Lemma \ref{lemhyposumovert}. We then make use of \eqref{hyposumovert} to lower bound the sum over $t$ in \eqref{preciseshapesum}.

\subsection{The estimate of the sum over $s'$}
By previous considerations, the sum over $s'$ is
\begin{equation}
\label{sumovers'}
\sum_{\substack{s'\leq N^{\eps/W},\\ p|s'\Rightarrow p>(\log N)^{B}\\ |g(p)|>(\log\log N)^{-1/2}\\ s'\in\mathcal{A}'}}\frac{|g(s')|^2}{s'}=\sum_{\substack{s'\leq N^{\eps/W},\\ p|s'\Rightarrow p>(\log N)^{B}\\ |g(p)|>(\log\log N)^{-1/2}}}\frac{|g(s')|^2}{s'}-\sum_{\substack{s'\leq N^{\eps/W},\\ p|s'\Rightarrow p>(\log N)^{B}\\ |g(p)|>(\log\log N)^{-1/2}\\ s'\not\in\mathcal{A}'}}\frac{|g(s')|^2}{s'}.
\end{equation}
We may deal with the second sum on the right hand side of \eqref{sumovers'} using the definition of the set $\mathcal{A}'$ of condition $(3.c)$ in the following way:
\begin{align*}
\sum_{\substack{s'\leq N^{\eps/W},\\ p|s'\Rightarrow p>(\log N)^{B}\\ |g(p)|>(\log\log N)^{-1/2}\\ s'\not\in\mathcal{A}'}}\frac{|g(s')|^2}{s'}&\ll_{\kappa} \frac{1}{\eps \log N}\sum_{\substack{s'\leq N^{\eps/W},\\ p|s'\Rightarrow p>(\log N)^{B}\\ |g(p)|>(\log\log N)^{-1/2}}}\frac{|g(s')|^2}{s'}\sum_{\substack{r|s':\\ r\ \textrm{prime}}}\frac{\log r}{\min\{|f(r)-1|,1\}}\\
&=\frac{1}{\eps \log N}\sum_{\substack{(\log N)^{B}<r\leq N^{\eps/W}\\|g(r)|>(\log\log N)^{-1/2}\\ r\ \textrm{prime}}}\frac{\log r}{\min\{|f(r)-1|,1\}}\sum_{\substack{s'\leq N^{\eps/W}\\ r|s'\\ \\ p|s'\Rightarrow p>(\log N)^{B}\\ |g(p)|>(\log\log N)^{-1/2}}}\frac{|g(s')|^2}{s'}\\
&\leq \frac{1}{\eps \log N}\sum_{\substack{(\log N)^{B}<r\leq N^{\eps/W}\\|g(r)|>(\log\log N)^{-1/2}\\ r\ \textrm{prime}}}\frac{|f(r)-1|^2\log r}{\min\{|f(r)-1|,1\}r}\sum_{\substack{s'\leq N^{\eps/W}\\ p|s'\Rightarrow p>(\log N)^{B}\\ |g(p)|>(\log\log N)^{-1/2}}}\frac{|g(s')|^2}{s'}\\
&\ll_{\kappa}\frac{1}{W}\sum_{\substack{s'\leq N^{\eps/W}\\ p|s'\Rightarrow p>(\log N)^{B}\\ |g(p)|>(\log\log N)^{-1/2}}}\frac{|g(s')|^2}{s'},
\end{align*}
where the fraction $|f(r)-1|^2/\min\{|f(r)-1|,1\}$ is easily seen to be bounded and we used Mertens' theorem to compute the sum over the primes.

Thus, choosing a value of $W=W(\kappa)\geq 3$ large enough we deduce that \eqref{sumovers'} is
\begin{align*}
\gg_{\kappa} \prod_{\substack{(\log N)^{B}<p\leq N^{\eps/W}\\ |g(p)|>(\log\log N)^{-1/2}}}\bigg(1+\frac{|g(p)|^2}{p}\bigg)&=\frac{\prod_{(\log N)^{B}<p\leq N^{\eps/W}}\bigg(1+\frac{|g(p)|^2}{p}\bigg)}{\prod_{\substack{(\log N)^{B}<p\leq N^{\eps/W}\\ |g(p)|\leq (\log\log N)^{-1/2}}}\bigg(1+\frac{|g(p)|^2}{p}\bigg)}\\
&\gg \prod_{(\log N)^{B}<p\leq N^{\eps/W}}\bigg(1+\frac{|g(p)|^2}{p}\bigg),
\end{align*}
by Lemma \ref{lemrankinestimate} and since the product in the denominator above is bounded.
\subsection{Completion of the proof of Theorem \ref{thmalpha}}
Collecting the above estimates together, we have found an overall lower bound for the sum involving $h_1(q)$ in \eqref{finalestimlowerboundvariance1} of 
$$\gg \bigg|\frac{\eta c_0}{\Gamma(\alpha)}\bigg|\frac{\beta\delta}{V}\prod_{C<p\leq N^{\eps/W}}\bigg(1+\frac{|g(p)|^2}{p}\bigg),$$
with $c_0$ as in the statement of Theorem \ref{thmalpha} and $\eta$ as in Lemma \ref{lemhyposumovert}.

The above product can be estimated through partial summation, giving a contribution of 
$$\gg\bigg(\frac{\eps}{W}\bigg)^{\beta}(\log N)^{\beta}\gg \eps^{\beta}(\log N)^{\beta},$$
where the $\gg $ constant depends on $\kappa,A_2,C$ and that in \eqref{mainstatistic2}.

Recalling that $\eps=\delta/V$, $C$ depends on $\delta, \kappa, A_1$ and the implicit constant in \eqref{mainstatistic1} and collecting the previous two estimates together, we have proved that the sum of $h_1(q)$ in \eqref{finalestimlowerboundvariance1} is
$$\gg \bigg|\frac{\eta c_0}{\Gamma(\alpha)}\bigg|\frac{\delta^{1+\beta}}{V^{1+\beta}}\beta(\log N)^{\beta},$$
where the above implicit constant may depend on $\delta,\kappa,A_1,A_2$ and the implicit constant in \eqref{mainstatistic1}--\eqref{mainstatistic2} and we consider $N$ as sufficiently large with respect to all these parameters. We deduce a lower bound for the term inside parenthesis in \eqref{finalestimlowerboundvariance1} of
\begin{align*}
&\gg \bigg|\frac{\eta c_0}{\Gamma(\alpha)}\bigg|\frac{\delta^{1+\beta}}{V^{1+\beta}}\beta(\log N)^{\beta}+O\bigg((\log N)^{(\kappa+1)^2+\kappa-\Re(\alpha)-A_1}(\log\log N)+|c_0|\frac{\beta\eps^{2+\beta}}{|\Gamma(\alpha)|}(\log N)^{\beta}\bigg)\\
&\gg \bigg|\frac{\eta c_0}{\Gamma(\alpha)}\bigg|\frac{\delta^{1+\beta}}{V^{1+\beta}}\beta(\log N)^{\beta},
\end{align*}
thanks to conditions \eqref{relationbetaA_1}--\eqref{gammaassumpt}, if we take $V$ large enough in terms of $\delta,\kappa,A_1,A_2$ and the implicit constants in  \eqref{mainstatistic1}--\eqref{mainstatistic2}, and $N$ sufficiently large in terms of all these parameters.
 
Remembering that 
$$Q_0=\frac{N(\log N)^{\eta_0}}{Q\beta^2},$$
where 
$$\eta_0=(\kappa+2)^2-\beta-2(\Re(\alpha)-1)+3,$$
as well as the relations \eqref{relationbetaA_1}, we have overall found that
\begin{equation}
\label{finalestimlowerboundvariance2}
V(Q,f) \gg \bigg|\frac{c_0\beta}{\Gamma(\alpha)}\bigg|^2\bigg(\frac{\delta}{V}\bigg)^{2(1+\beta)}QN(\log N)^{\beta+2(\Re(\alpha)-1)},
\end{equation}
where the implicit constant above may depend on $\delta,\kappa, A_1,A_2$ and those in \eqref{mainstatistic1}--\eqref{mainstatistic2} and $N\geq N_0$, with $N_0$ large enough depending on all these parameters. Since the term $(\delta/V)^{2(1+\beta)}$ is uniformly bounded in terms of the aforementioned parameters, it may be absorbed in the implicit constant in \eqref{finalestimlowerboundvariance2}. Finally, recalling the estimate \eqref{averagefsquare}, we notice that equation \eqref{finalestimlowerboundvariance2} is actually in the form stated in Theorem \ref{thmalpha}, thus concluding its proof. 

\section*{Acknowledgements}
The author is indebted to his supervisor Adam J. Harper for numerous conversations about this problem. In particular, he would like to thank him for a careful reading of this paper and for several suggestions that substantially improved the exposition and notably simplified various proofs.

\end{document}